\documentclass[12pt,reqno]{amsart}

\usepackage{cite}
\usepackage[colorlinks,citecolor=blue,hypertexnames=false,urlcolor=blue]{hyperref}
\usepackage{amssymb,amsmath,amsthm,amsfonts,latexsym,eucal}
\usepackage{bm}
\usepackage{mathtools}
\usepackage{enumerate}
\usepackage{array,graphics,xcolor}
\allowdisplaybreaks
\usepackage{mathrsfs}
\usepackage{ulem}
\usepackage{mathtools}
\usepackage{blkarray, bigstrut}
\usepackage{csquotes}
\usepackage{ulem}

\numberwithin{equation}{section}

\allowdisplaybreaks

\newtheorem{thm}{Theorem}[section]
\newtheorem{crl}[thm]{Corollary}
\newtheorem{lma}[thm]{Lemma}
\newtheorem{ppsn}[thm]{Proposition}

\theoremstyle{definition}

\newtheorem{dfn}[thm]{Definition}
\newtheorem{xmpl}[thm]{Example}

\theoremstyle{remark}
\newtheorem{rmrk}[thm]{Remark}

\newcommand{\R}{\mathbb{R}}
\newcommand{\C}{\mathbb{C}}
\newcommand{\N}{\mathbb{N}}	

\newcommand{\hilh}{\mathcal{H}}

\newcommand{\bh}{\mathcal{B}(\hilh)}

\newcommand{\Sp}{\mathcal{S}}

\newcommand{\Tr}{\operatorname{Tr}}

\newcommand{\fW}{\mathfrak{W}}
\newcommand{\fC}{\mathfrak{C}}
\newcommand{\fA}{\mathfrak{A}}
\newcommand{\fQ}{\mathfrak{Q}}

\newcommand{\cM}{\mathcal{M}}
\newcommand{\cW}{\mathcal{W}}
\newcommand{\cR}{\mathcal{R}}

\begin{document}
	

\title[Noncommutative $L_p$-differentiability and trace formulae]{Noncommutative $L_p$-differentiability and trace formulae}

\author[Chattopadhyay]{Arup Chattopadhyay}

\address{Department of Mathematics, Indian Institute of Technology Guwahati, Guwahati, 781039, India}
\email {arupchatt@iitg.ac.in, 2003arupchattopadhyay@gmail.com}

\author[Coine]{Cl\'ement Coine}
\address{Normandie Univ, UNICAEN, CNRS, LMNO, 14000 Caen, France}	
\email{clement.coine@unicaen.fr}

\author[Giri]{Saikat Giri}
\address{Department of Mathematics, Indian Institute of Technology Guwahati, Guwahati, 781039, India}
\email{saikat.giri@iitg.ac.in, saikatgiri90@gmail.com}

\author[Pradhan]{Chandan Pradhan}
\address{Department of Mathematics and Statistics, University of New Mexico, Albuquerque, NM 87106, USA}
\email{chandan.pradhan2108@gmail.com, cpradhan@unm.edu}

\subjclass{47A55, 46L52, 47A56}
\keywords{semifinite von Neumann algebra, noncommutative $L_p$-space, multiple operator integrals, spectral shift function}

\begin{abstract}
Let $\cM$ be a semifinite von Neumann algebra equipped with a normal faithful semifinite trace $\tau$, and let $L_p(\cM)$ denote the associated noncommutative $L_p$-space for $1<p<\infty$. Let $n\in\N$ and let $a, b$ be $\tau$-measurable self-adjoint operators such that $b\in L_p(\cM)\cap L_{np}(\cM)$. For a function $f\in C^n(\R)$ whose derivatives $f^{(k)}$ are bounded for $1\le k\le n$, we prove that the map $\phi:t\in\R\mapsto f(a+tb)-f(a)$ is $n$-times differentiable in the $\|\cdot\|_{L_p}$-norm. This strengthens the corresponding result of de Pagter and Sukochev for $p\neq 2$ and extends it to higher-order derivatives. In addition, if $f^{(n)}\in C_0(\R)$ or $b\in \cM$, then $\phi^{(n)}$ is continuous on $\R$. Consequently, we extend the Potapov--Skripka--Sukochev higher-order trace formula from bounded $L_n$-perturbations to not necessarily bounded perturbations in $L_n(\cM)\cap L_{n^{2}}(\cM)$. Moreover, we show that this trace formula holds for a broader class of admissible functions than the classes previously considered in the literature.
\end{abstract}

\maketitle

\section{Introduction}
Let $\hilh$ be a complex separable Hilbert space and let $\cM$ be a von Neumann algebra acting on $\hilh$. Throughout the paper, we assume that $\cM$ is equipped with a semifinite faithful normal trace $\tau:\cM^+ \to [0,\infty]$. In this situation, the pair $(\cM,\tau)$ is called a semifinite von Neumann algebra. We denote by $\widetilde{\cM}$ the $*$-algebra of all $\tau$-measurable operators affiliated with $\cM$. 

Let $1 \le p < \infty$. The noncommutative $L_p$-space associated with $(\cM,\tau)$ is defined by
\[
L_p(\cM,\tau) := \left\{ a \in \widetilde{\cM} : \|a\|_p := \tau(|a|^p)^{1/p} < \infty \right\}.
\]
For $p=\infty$, we set $L^\infty(\cM,\tau)=\cM$ and equip it with the operator norm $\|\cdot\|_\infty$. In the particular case where $\cM=\bh$ and $\tau=\Tr$ is the canonical trace, the space $L_p(\cM,\tau)$ coincides with the Schatten $p$-class, denoted by $\mathcal{S}^p(\hilh)$. In the sequel we will denote $L_p(\cM,\tau)$ simply by $L_p(\cM)$. For a comprehensive exposition of noncommutative $L_p$-spaces, we refer to \cite{PiXu03, Sukochev-book}. 

\medskip

\noindent\underline{Higher-order $L_p$-differentiability:}

\smallskip

Let $f:\R\to\C$ be a sufficiently regular function, and let $a,b\in\widetilde{\cM}$ be self-adjoint operators with $b\in L_p(\cM)$. We consider the operator-valued function $\phi:\R\to L_p(\cM)$ defined by
\begin{equation}\label{op_func}
\phi(t)=f(a+tb)-f(a).
\end{equation}
We say that $\phi$ is {\it noncommutative $L_p$-differentiable} if the limit
\[
\lim_{s\to 0}\frac{\phi(t+s)-\phi(t)}{s}
\]
exists in the $L_p$-norm.

The problem of $L_p$-differentiability of operator functions of the form \eqref{op_func} is fundamental in perturbation theory. It originated in the work of Daletskii and Krein \cite{DaletskiiKrein1956} for the case $\cM=\bh$ (see also \cite{widom}), where the goal was to understand the analytic behavior of operator functions under perturbations. A major advance was achieved in \cite{BS1}, where differentiability results were established using the theory of {\it double operator integrals} (DOI), which has since become a central tool in this area.

The question of higher-order differentiability of \eqref{op_func} for $\cM=\bh$ was investigated in \cite{Pe06}. The explicit formulas for higher-order derivatives were obtained via {\it multiple operator integrals} (MOI), which generalize DOI. Since then, substantial progress has been made in the Schatten-class setting; see, for example, \cite{KiPoShSu14, CoLemSkSu19, LeSk20, Co22}. The following result for $\mathcal{S}^p(\hilh)$-perturbations were obtained in \cite{CLMMcd,LeSk20}.

\begin{thm}\label{Le_Sk_diff}
Let $1<p<\infty$, and let $a,b$ be self-adjoint operators in $\hilh$ with $b\in\Sp^p(\hilh)$. Let $n\in\N$, and let $f$ be an $n$-times continuously differentiable function on $\R$ such that $f^{(i)}$ is bounded for all $1\le i\le n$. Define
\[
\phi:t\in\R\mapsto f(a+tb)-f(a)\in\Sp^p(\hilh).
\]
Then $\phi$ is $n$-times differentiable in the $\|\cdot\|_p$-norm. For every integer $1\le k\le n$, the derivative $\phi^{(k)}$ is bounded on\/ $\R$ and satisfies
\[
\frac{1}{k!}\,\phi^{(k)}(t)
=\left[\Gamma^{a+tb,a+tb,\ldots,a+tb}\bigl(f^{[k]}\bigr)\right](b,\ldots,b),
\quad t\in\R.
\]
Here, $\Gamma^{a + tb, a + tb, \ldots, a+ tb}(f^{[k]})$ is given by Definition \ref{moi_coine}. Moreover, if $f^{(n)}$ is uniformly continuous on $\R$, then $\phi^{(n)}$ is continuous on $\R$.
\end{thm}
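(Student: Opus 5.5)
We argue by induction on $k$, using multiple operator integrals $\Gamma^{a_0,\ldots,a_k}(f^{[k]})$ in the sense of Definition \ref{moi_coine}. We take two ingredients from \cite{CLMMcd,LeSk20} as given. The first is the $\Sp^p$-boundedness: for $1<p<\infty$, $f\in C^k$ with $f^{(k)}$ bounded, and arbitrary self-adjoint $a_0,\dots,a_k$, the map $\Gamma^{a_0,\ldots,a_k}(f^{[k]})$ is bounded from $\Sp^p\times\cdots\times\Sp^p$ to $\Sp^p$. Its norm is at most $c_{p,k}\|f^{(k)}\|_\infty$. (For $k=1$ this is the Potapov--Sukochev theorem, and its multilinear analogue is what \cite{CLMMcd} supplies.) The second is the perturbation (resolvent-type) identity
\[
\Gamma^{\ldots,a_{j-1},A,a_{j+1},\ldots}(f^{[k]})-\Gamma^{\ldots,a_{j-1},B,a_{j+1},\ldots}(f^{[k]})
=\Gamma^{\ldots,a_{j-1},A,B,a_{j+1},\ldots}(f^{[k+1]})(\ldots,A-B,\ldots),
\]
where $A-B$ is inserted in the $j$-th slot. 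This holds whenever $A-B\in\Sp^p$ and $f^{[k+1]}$ is admissible. It generalizes the first-order formula $f(A)-f(B)=\Gamma^{A,B}(f^{[1]})(A-B)$.

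\emph{Step 1 ($k=1$).} Write $a_t=a+tb$. The first-order formula gives $\phi(t+s)-\phi(t)=s\,\Gamma^{a_{t+s},a_t}(f^{[1]})(b)$. It then suffices to show $\Gamma^{a_{t+s},a_t}(f^{[1]})(b)\to\Gamma^{a_t,a_t}(f^{[1]})(b)$ in $\Sp^p$. If $f\in C^2$ with bounded $f''$, the identity above converts this difference into $s\,\Gamma^{a_{t+s},a_t,a_t}(f^{[2]})(b,b)$, which is $O(s)$. For general $f$ with only $f'$ bounded, we approximate: choose $f_m$ with $f_m'\to f'$ and $\|f_m'\|_\infty\le\|f'\|_\infty$. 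Since the $\Gamma$'s are uniformly bounded, an $\varepsilon/3$ argument gives the result. One term requires the strong convergence $\Gamma^{a_{t+s},a_t}(g^{[1]})(b)\to\Gamma^{a_t,a_t}(g^{[1]})(b)$ as $s\to0$ for a fixed $g$. We obtain this from the fact that $b$ is approximable in $\Sp^p$ by finite-rank operators, together with the strong resolvent convergence $a_{t+s}\to a_t$ applied to the spectral measures. Pointwise-on-finite-rank convergence follows for continuous bounded symbols.

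\emph{Step 2 (induction).} Suppose $\frac1{(k-1)!}\phi^{(k-1)}(t)=\Gamma^{a_t,\dots,a_t}(f^{[k-1]})(b,\dots,b)$, with $k\le n$. We replace the $k$ spectral arguments $a_t$ by $a_{t+s}$ one at a time. By the perturbation identity, each replacement produces a term $s\,\Gamma^{a_{t+s},\dots,a_{t+s},a_t,\dots,a_t}(f^{[k]})(b,\dots,b)$ with $k$ entries. There are $k$ such terms. Each converges in $\Sp^p$ to $\Gamma^{a_t,\ldots,a_t}(f^{[k]})(b,\ldots,b)$ by the same approximation-plus-strong-convergence argument as in Step 1. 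The $k$ terms sum to $k\,\Gamma^{a_t,\ldots,a_t}(f^{[k]})(b,\ldots,b)$, and multiplying by $(k-1)!$ gives the stated formula. Boundedness of $\phi^{(k)}$ follows from the uniform norm bound $c_{p,k}\|f^{(k)}\|_\infty\|b\|_p^k$.

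\emph{Step 3 (continuity).} Assume $f^{(n)}$ is uniformly continuous. To prove $\Gamma^{a_{t'},\ldots}(f^{[n]})(b,\dots,b)\to\Gamma^{a_t,\ldots}(f^{[n]})(b,\dots,b)$, we mollify: $f_\delta=f*\rho_\delta$. Uniform continuity gives $\|f_\delta^{(n)}-f^{(n)}\|_\infty\to0$, so the $\Gamma$'s for $f_\delta$ converge uniformly in $t$. For $f_\delta$ one more derivative is bounded, and the perturbation identity then gives Lipschitz dependence on $t$. The main obstacle is Step 1's strong-continuity claim when $f^{(k)}$ is merely bounded and continuous, since the symbol $f^{[k]}$ is then not smooth enough for direct estimates. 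I would try to handle it via finite-rank approximation of $b$, combined with the weak$^*$ continuity properties of MOIs built into Definition \ref{moi_coine}.
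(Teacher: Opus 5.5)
\textbf{There is a genuine gap in the continuity step of Steps 1--2.}

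Your skeleton is the right one, and it matches how the paper recovers this statement. It is the case $\cM=\bh$ of Theorem~\ref{DiffLp}, where $b\in\Sp^p$ is automatically bounded and lies in every $\Sp^r$, $r\ge p$. The perturbation identity (Corollary~\ref{Perturbationformulageneral}) turns the difference quotient of $\phi^{(k-1)}$ into $\sum_{l=1}^{k}\Gamma^{(a_{t+s})^l,(a_t)^{k+1-l}}(f^{[k]})(b,\ldots,b)$. Everything then hinges on continuity of these mixed MOIs at $s=0$. Your Step~3 is also fine; it is case~(ii) of Proposition~\ref{continuityat0}.

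The problem is that continuity at $s=0$ is never proved when $f^{(k)}$ is merely bounded and continuous, and the argument you give for it fails:
\begin{itemize}
\item Your $\varepsilon/3$ argument needs $\sup_{s}\|\Gamma^{a_{t+s},a_t}((f-f_m)^{[1]})(b)\|_p\to 0$, and hence $\|f_m'-f'\|_\infty\to0$. Uniform boundedness of the $\Gamma$'s together with pointwise convergence $f_m'\to f'$ gives at best weak convergence for each fixed $s$. It gives no smallness uniform in $s$.
\item Uniform approximation of $f^{(k)}$ by $g^{(k)}$ with $g^{(k+1)}$ bounded is possible only if $f^{(k)}$ is uniformly continuous, since a uniform limit of Lipschitz functions is uniformly continuous. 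That is exactly the hypothesis not available for the differentiability part.
\item Your fallback, ``pointwise-on-finite-rank convergence for continuous bounded symbols'', is the whole difficulty restated.
\item The tool you suggest for it, $w^*$-continuity of $\Gamma^{a_1,\ldots,a_n}$ in the symbol, does not apply. Here the symbol is fixed while the spectral measures move with $s$, so even the space $L^\infty(\prod\lambda_{a_i})$ changes. In any case it would only give weak convergence, whereas you need convergence in $\|\cdot\|_p$.
\end{itemize}

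\textbf{The missing idea is localization, as in the proof of Proposition~\ref{continuityat0}(iii).}
\begin{itemize}
\item Multiply the symbol by $h_m(\lambda_0)h_m(\lambda_k)$, where $h_m\in C_c^\infty(\R)$, $0\le h_m\le 1$ and $h_m\to1$ pointwise. This amounts to replacing the two outer entries $b$ by $h_m(\cdot)b$ and $bh_m(\cdot)$, evaluated at the corresponding outer operators.
\item Because $b$ is bounded, $h_m(a_{t+s})\to h_m(a_t)$ strongly as $s\to0$ (Lemma~\ref{resolventstrongCV}). Also $h_m(a)b\to b$ and $bh_m(a)\to b$ in $\Sp^{kp}$. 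So for $m$ large and then $|s|$ small, the truncation error is small.
\item Rewrite the truncated symbol via Coine's identity \cite[Eq.~(3.21)]{Co22}:
\[
f^{[k]}H_m=(fh_m)^{[k]}h_m(\lambda_k)-h_m^{[k]}(fh_m)(\lambda_k)-h_m(\lambda_k)\sum_{l=1}^{k-1}h_m^{[l]}(\lambda_0,\ldots,\lambda_l)f^{[k-l]}(\lambda_l,\ldots,\lambda_k).
\]
\item Here $fh_m$ and $h_m$ are compactly supported, so their $k$-th derivatives lie in $C_0(\R)$. They are therefore uniformly approximable by $C_c^{k+1}$ functions, for which the perturbation identity gives Lipschitz dependence on $s$. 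The lower-order terms are handled by induction on $k$.
\end{itemize}

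A direct route along your lines might be workable. For rank-one entries, strong resolvent convergence acts on narrowly convergent scalar spectral measures, which would give $\Sp^2$-convergence. Interpolation with the uniform $\Sp^{q_0}$ bounds for some $q_0<p$ would then reach $\Sp^p$. None of this is carried out in your proposal, and without it Steps~1--2 do not establish differentiability.
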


It was subsequently shown in \cite{Co22} that the continuity assumption on $f^{(n)}$ can be removed without affecting the $n$-times differentiability of $\phi$.

In the general setting of semifinite von Neumann algebras, the differentiability of the function \eqref{op_func} under noncommutative $L_p$-perturbations was studied in \cite{PaSu04}. To state their result, we first recall some notation. 

Let $D^n(\R)$ denote the space of all $n$-times differentiable complex-valued functions on $\R$, and let $C^n(\R)$ be its subspace of $n$-times continuously differentiable functions. We write $C_c^n(\R)$ for the subspace of compactly supported functions in $C^n(\R)$, $C(\R)=C^0(\R)$, and $C_c(\R)=C_c^0(\R)$. Moreover, $C_b(\R)$ denotes the space of bounded continuous functions on $\R$, and $C_b^n(\R)$ consists of all $f\in C^n(\R)$ such that $f',\ldots,f^{(n)}\in C_b(\R)$. We also let $C_0(\R)$ denote the space of continuous functions vanishing at infinity, and $C_0^n(\R)$ the space of all $f\in C^n(\R)$ with $f^{(n)}\in C_0(\R)$. Finally, $D^n_b(\R)$ denotes the space of all $n$-times differentiable functions $f$ such that $f^{(n)}$ is bounded.

\begin{thm}{\normalfont\cite{PaSu04}}\label{thm:PaSu}
Let $1<p<\infty$, and let $a,b\in\widetilde{\cM}$ be self-adjoint operators with $b\in L_p(\cM)$. Suppose that $f\in C_b^1(\R)$ and that $f'$ has bounded variation. Define
\[
\phi:t\in\R\mapsto f(a+tb)-f(a)\in L_p(\cM).
\]
Then $\phi$ is differentiable in the $L_p$-norm, and its derivative is given by
\[
\phi'(t)=T_{f^{[1]}}^{a+tb,a+tb}(b), \quad t\in\R.
\]
For $p=2$, the bounded variation assumption on $f'$ may be omitted. Here $T_{f^{[1]}}^{a+tb,a+tb}$ is given by Definition~\ref{moi_pss}.
\end{thm}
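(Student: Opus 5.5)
The plan is to follow the double operator integral approach of Birman--Solomyak, carried out in the semifinite setting. Throughout, $1<p<\infty$, $f\in C_b^1(\R)$ with $f'$ of bounded variation, and $T^{x,y}_{g}$ denotes the double operator integral of Definition~\ref{moi_pss} for self-adjoint $x,y\in\widetilde{\cM}$.

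\textbf{Step 1: uniform boundedness of the transformers.} First show that for every pair of self-adjoint $x,y\in\widetilde{\cM}$, the transformer $T^{x,y}_{f^{[1]}}$ is bounded on $L_p(\cM)$, with a norm bound depending only on $f$. For $p=2$ this is immediate: $T^{x,y}_{g}$ is defined through the product spectral measure on $L_2(\cM)$, so its norm is at most $\|g\|_\infty\le\|f'\|_\infty$ for any bounded Borel $g$. For $p\ne2$, use the bounded variation of $f'$ to write
\[
f^{[1]}(\lambda,\mu)=\int_0^1 f'\bigl(s\lambda+(1-s)\mu\bigr)\,ds .
\]
Representing $f'$ through its Lebesgue--Stieltjes measure, $f'(u)=c+\int \chi_{(r,\infty)}(u)\,d\nu(r)$, reduces matters to the double operator integrals with symbols $\chi_{(r,\infty)}(s\lambda+(1-s)\mu)$. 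The $L_p$-boundedness of these uniformly in $r$ and $s$ should follow from the UMD property of $L_p(\cM)$ (triangular truncation and Hilbert-transform type bounds), giving $\|T^{x,y}_{f^{[1]}}\|_{L_p\to L_p}\le C_p\bigl(|f'(\infty)|+\|\nu\|\bigr)$.

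\textbf{Step 2: the perturbation identity.} Next establish, for $b\in L_p(\cM)$,
\[
f(x+b)-f(x)=T^{x+b,x}_{f^{[1]}}(b),
\]
and show that the left side lies in $L_p(\cM)$. I would prove this first for $b\in L_p\cap\cM$ and nice $f$ (say $f$ with $\hat f'$ integrable), where both sides can be computed directly via spectral measures and the identity $f(\lambda)-f(\mu)=f^{[1]}(\lambda,\mu)(\lambda-\mu)$. The general case then follows by approximation in $f$ and in $b$, using the bound from Step~1.

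\textbf{Step 3: differentiability.} With $x=a+tb$, Step~2 gives
\[
\frac{\phi(t+s)-\phi(t)}{s}-T^{x,x}_{f^{[1]}}(b)=\bigl(T^{x+sb,x}_{f^{[1]}}-T^{x,x}_{f^{[1]}}\bigr)(b).
\]
It remains to show that this tends to $0$ in $L_p$ as $s\to0$. I expect this to be the main obstacle, because convergence of $x+sb\to x$ holds only in the measure topology or resolvent sense, not in norm.

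My first attempt would go as follows. By Step~1 the operators $T^{x+sb,x}_{f^{[1]}}$ are uniformly bounded, so it suffices to check the convergence on a dense set of $b$. The natural choices are $b\in L_p\cap L_2\cap\cM$, or $b=e b e$ with $\tau(e)<\infty$. On such $b$, interpolate between $L_2$ and $L_q$ for some $q$ beyond $p$: uniform boundedness holds in $L_q$, so convergence in $L_2$ combined with that bound yields convergence in $L_p$. Convergence in $L_2$ is obtained from the spectral-measure description there. Approximating $f^{[1]}$ by continuous functions, one can use strong resolvent convergence of $x+sb$ to $x$ and the fact that $E_{x+sb}\otimes E_x\to E_x\otimes E_x$ weakly as product measures acting on Hilbert--Schmidt type elements. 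The discontinuity of $f'$ on a null set is handled by the bounded-variation decomposition.

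For $p=2$ the bounded-variation assumption is never used. The norm bound in Step~1 is trivial, and Step~3 works directly in $L_2$ through the spectral product measure with any bounded $f'$.
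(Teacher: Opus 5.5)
\textbf{Verdict.} Your outline works, but it follows a genuinely different route from the paper's.

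\textbf{What the paper does.} The paper does not reprove this cited result. It recovers it, without the bounded-variation hypothesis, as the case $n=1$ of Theorem~\ref{DiffLp}, working at $t=0$ by translation.
\begin{itemize}
\item The uniform bound comes from the Potapov--Sukochev estimate, Theorem~\ref{bddMOI}, which holds for every Lipschitz $f$. Bounded variation plus triangular truncation is not used.
\item Your Step~2 identity is simply quoted, as Theorem~\ref{Perturbation-Theorem}.
\item Continuity of $s\mapsto T^{a+sb,a}_{f^{[1]}}(b)$ is proved first for bounded $b$, via Proposition~\ref{continuityat0}(iii). That argument uses cut-offs $h_k$ to reduce to compactly supported symbols and the $C_0$ case, together with Lemma~\ref{resolventstrongCV}.
\item The result is then transferred to $b\in L_p(\cM)$ through $\frac{\phi(s)-\tilde\phi(s)}{s}=T^{a+sb,a+s\tilde b}_{f^{[1]}}(b-\tilde b)$ and the uniform bound.
\end{itemize}

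\textbf{What your route does instead.} Your Step~1 is fine. For $0<s<1$ the symbol $\chi_{(r,\infty)}(s\lambda+(1-s)\mu)$ is the indicator of $\{\lambda>g(\mu)\}$ with $g$ affine and decreasing. It is therefore a triangular truncation relative to $E_x$ and $E_{g(y)}$, bounded uniformly in $r$ and $s$.

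Your Step~3 replaces the paper's machinery with a more elementary Hilbert-space argument, which is correct:
\begin{itemize}
\item left multiplication by $x+sb$ on $L_2(\cM)$ converges in the strong resolvent sense. This holds even for unbounded $b$, from the resolvent identity in $\widetilde{\cM}$ and $\|(x+sb-i)^{-1}-(x-i)^{-1}\|_p\le |s|\,\|b\|_p$;
\item the joint functional calculus is continuous for the bounded continuous symbol $f^{[1]}$;
\item you then interpolate against a uniform $L_q$ bound, with $q$ on the far side of $p$ from $2$.
\end{itemize}
This makes clear that bounded variation enters only through Step~1. Feeding in Theorem~\ref{bddMOI} instead gives exactly the paper's improvement for $n=1$. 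What your route does not obviously give is higher order. For multilinear $T_{f^{[n]}}$ there is no $L_2$ functional-calculus model to interpolate from, which is why the paper builds its cut-off and induction argument.

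\textbf{Points to tighten.}
\begin{itemize}
\item The family $T^{x+sb,x}_{f^{[1]}}$ itself depends on $b$, so ``check on a dense set of $b$'' needs care. It must mean: prove strong convergence of this fixed family on arguments $c\in L_2(\cM)\cap L_q(\cM)$, then apply it to $c=b$. Alternatively, use the displayed identity, as the paper does.
\item For $f\in C^1_b(\R)$ the symbol $f^{[1]}$ is already continuous. No approximation of $f^{[1]}$ is needed, and there is no ``discontinuity of $f'$'' to handle.
\item The approximation in $b$ in your Step~2 changes the spectral measure of $x+b_k$, so the Step~1 bound alone does not close it. You need the Step~3 convergence argument there as well, or you can simply quote Theorem~\ref{Perturbation-Theorem}.
\end{itemize}
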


Higher-order noncommutative $L_p$-differentiability of \eqref{op_func} has also been studied in \cite{AzCaDoSu09, Nik23, Nik23jam} under the additional assumption that the perturbation is bounded, namely, $b\in L_p(\cM)\cap\cM$, and for substantially more restrictive classes of functions $f$. In particular, \cite{AzCaDoSu09} considers functions $f\in C^n(\R)$, such that the $j$-th derivative $f^{(j)}$, $j=0,1,\ldots,n$, is the Fourier transform of a finite measure on $\R$, while \cite{Nik23, Nik23jam} deal with $C^n$-functions whose $n$-th order divided differences belong to certain integral tensor spaces.

In the present paper, we revisit the problem of noncommutative $L_p$-differentiability of the operator function \eqref{op_func}. Our main goal is to establish a complete noncommutative $L_p$-analogue of Theorem~\ref{Le_Sk_diff}. In particular, we obtain higher-order differentiability results for unbounded $L_p$-perturbations in the setting of semifinite von Neumann algebras. Our main result on $L_p$-differentiability is as follows.

\begin{thm}\label{DiffLp0}
Let $1<p<\infty$, $n\in\N$, and let $f\in C_b^n(\R)$. Let $a,b\in\widetilde{\cM}$ be self-adjoint operators such that $b\in L_p(\cM)\cap L_{np}(\cM)$. Define
\[
\phi:t\in\R\mapsto f(a+tb)-f(a)\in L_p(\cM).
\]
Then $\phi$ is $n$-times differentiable on $\R$, and for every integer $1\le k\le n$,
\[
\frac{1}{k!}\,\phi^{(k)}(t)
= T_{f^{[k]}}^{a+tb,\ldots,a+tb}(b,\ldots,b),
\]
where $T_{f^{[k]}}^{a+tb,\ldots,a+tb}$ is given by Definition \ref{moi_pss}. Moreover, $\phi^{(n)}$ is continuous on $\R$ if any of the following holds: $f^{(n)} \in C_0(\R)$; $f^{(n)}$ is uniformly continuous on $\R$ and $b\in L_{p}(\cM)\cap L_{(n+1)p}(\cM)$; or $b\in L_{p}(\cM)\cap\cM$.
\end{thm}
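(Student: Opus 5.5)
We argue by induction on $k$, using the perturbation formula for multiple operator integrals together with boundedness of $T_{f^{[k]}}$ on products of noncommutative $L_p$-spaces. The basic tool we assume (it is what Definition~\ref{moi_pss} is built for) is the multilinear Hölder-type estimate for $f\in C^k_b(\R)$ and $1<p_1,\dots,p_k<\infty$ with $\frac1p=\sum_i\frac1{p_i}$:
\[
\bigl\|T^{a_0,\dots,a_k}_{f^{[k]}}(x_1,\dots,x_k)\bigr\|_p\le C_{p,k}\,\|f^{(k)}\|_\infty\prod_{i}\|x_i\|_{p_i}.
\]
This is the Potapov--Skripka--Sukochev bound, extended to unbounded self-adjoint $\tau$-measurable $a_i$ by approximating with spectral truncations. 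We also assume the perturbation (resolvent-type) identity: for $a-c\in L_{p_0}$,
\[
T^{\dots,a,\dots}_{f^{[k]}}(\dots)-T^{\dots,c,\dots}_{f^{[k]}}(\dots)=T^{\dots,a,c,\dots}_{f^{[k+1]}}(\dots,a-c,\dots),
\]
where the extra entry $a-c$ is inserted in the slot between the two operators. The case $k=0$ of this identity is $f(a+sb)-f(a)=s\,T^{a+sb,a}_{f^{[1]}}(b)$, which we use for $b\in L_p$.

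The key observation is that $b\in L_p\cap L_{np}$ gives $b\in L_q$ for all $q\in[p,np]$ by interpolation. For the $k$-th derivative, we take all exponents $p_i=kp\in[p,np]$, so that $\|T_{f^{[k]}}(b,\dots,b)\|_p\lesssim\|b\|_{kp}^k$. Now set $a_t=a+tb$.

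For the induction step from $k-1$ to $k$ with $k\le n$, we write the difference quotient of $\frac{1}{(k-1)!}\phi^{(k-1)}$ using the identity above, once in each of the $k$ slots:
\[
\frac1s\Bigl(T^{a_{t+s},\dots}_{f^{[k-1]}}-T^{a_t,\dots}_{f^{[k-1]}}\Bigr)(b,\dots,b)=\sum_{j=1}^{k}T^{a_{t+s},\dots,a_{t+s},a_t,\dots,a_t}_{f^{[k]}}(b,\dots,b),
\]
where each term has $k$ copies of $b$. To make sense of this we need $f^{[k]}$, i.e. $f\in C^k$, which holds since $k\le n$. It then remains to show that each term converges in $L_p$ to $T^{a_t,\dots,a_t}_{f^{[k]}}(b,\dots,b)$ as $s\to0$, because summing the $k$ limits yields the factor $k$ in $\frac{1}{k!}\phi^{(k)}$.

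This convergence is the main obstacle. Applying the identity once more would require $f^{[k+1]}$, which is unavailable at the top level $k=n$. Instead we plan to prove strong continuity of $c\mapsto T^{\dots,c,\dots}_{f^{[k]}}(x_1,\dots,x_k)$ along $c=a_t+sb$ as $s\to0$, with fixed $x_i\in L_{kp}$.
\begin{itemize}
\item We first approximate the $x_i$ by elements of $L_{kp}\cap\cM$ with finite-trace support.
\item We then approximate $f^{(k)}$ uniformly on compacts, for instance by truncating $a_t$ spectrally with projections $E_{a_t}([-R,R])$.
\item We use the fact that $a_t+sb\to a_t$ in measure, so $g(a_t+sb)\to g(a_t)$ in measure for continuous bounded $g$.
\item Combined with the uniform bound above, the $L_p$-convergence on these dense sets passes to the general case by an $\varepsilon/3$ argument. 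Here the exponent choice $p<kp$ leaves room: convergence in measure plus domination of the form $\|\,\cdot\,y\|_p\le\|\cdot\|_\infty\|y\|_p$ with finite-support $y$ gives $L_p$ convergence via the noncommutative dominated convergence theorem.
\end{itemize}

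For the continuity of $\phi^{(n)}$ we treat the three cases separately, since the same continuity argument with all entries perturbed yields continuity in $t$ under each hypothesis.
\begin{itemize}
\item If $f^{(n)}\in C_0$, the compactness approximation step is uniform.
\item If $f^{(n)}$ is only uniformly continuous, we use $b\in L_{(n+1)p}$ to estimate the difference by an $(n+1)$-fold integral-type bound with exponents $(n+1)p$, after approximating $f^{(n)}$ by smooth functions $f_\varepsilon$ with $f_\varepsilon^{(n+1)}$ bounded. The approximation error is controlled by $\|f^{(n)}-f^{(n)}_\varepsilon\|_\infty$, and the smooth part is Lipschitz in $t$.
\item If $b\in\cM$, the perturbation is bounded, so $a_t+sb\to a_t$ in operator norm, and uniform approximation of $f^{(n)}$ on the relevant spectra suffices.
\end{itemize}
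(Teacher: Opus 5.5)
There is a genuine gap at the step you yourself flag as the main obstacle.

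Your framework is the paper's. You use induction on the order and the perturbation formula (Theorem \ref{Prtb-Thm}, Corollary \ref{Perturbationformulageneral}) to turn the difference quotient of $\frac{1}{(k-1)!}\phi^{(k-1)}$ into $\sum_{l=1}^{k}T_{f^{[k]}}^{(a_{t+s})^{l},(a_t)^{k+1-l}}((b)^{k})$ with all exponents equal to $kp$. You then reduce to continuity at $s=0$ of these integrals. Your treatment of the cases $f^{(n)}\in C_0(\R)$ and ($f^{(n)}$ uniformly continuous, $b\in L_{(n+1)p}(\cM)$) is in line with Proposition \ref{continuityat0}(i),(ii).

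The gap is that continuity at top order $k=n$, with $f^{(n)}$ merely bounded and continuous, is what the unconditional differentiability claim needs, and your sketch does not deliver it.
\begin{itemize}
\item Your only estimate, Theorem \ref{bddMOI}, controls errors by $\|f^{(n)}-g^{(n)}\|_\infty$ over all of $\R$. Approximating $f^{(n)}$ uniformly on compacts therefore gives nothing unless every variable of $f^{[n]}$ has been localized.
\item Inserting $E_{a_t}([-R,R])$ localizes only the variables attached to $a_t$. The slots carrying $a_{t+s}$ are untouched, and $f^{[n]}(\lambda_0,\ldots,\lambda_n)$ depends on $f$ near every $\lambda_i$: already $f^{[2]}(\lambda_0,\lambda_1,\lambda_2)$ involves $f(\lambda_1)$ when $\lambda_0,\lambda_2\in[-R,R]$.
\item Truncating those slots with $E_{a_{t+s}}([-R,R])$ would require these spectral projections (a discontinuous functional calculus) to converge as $s\to 0$. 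You do not address this, and it fails for general $R$. So the truncation error is not controlled uniformly in $s$.
\item Consequently your convergence-in-measure argument for $g(a_t+sb)$ never attaches to the actual operator. For general $f\in C^n_b(\R)$, $T_{f^{[n]}}$ is only a limit of discretizations, not an integral of products of functional calculi; the representation of Lemma \ref{nice_moi_bdd} needs $f^{[n]}\in\fC_n$.
\item Your bullet for $b\in\cM$ rests on the same step. The spectrum of $a_t$ is unbounded, so uniform approximation of $f^{(n)}$ on it is global approximation again.
\end{itemize}

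What is missing is a localization that is compatible with the global bound and continuous in the perturbation parameter, together with a device for the inner variables. The paper supplies these in Proposition \ref{continuityat0}(iii).
\begin{itemize}
\item It first takes $b\in L_p(\cM)\cap\cM$ and multiplies $f^{[n]}$ by $h_k(\lambda_0)h_k(\lambda_n)$, with $h_k\in C_c^\infty(\R)$, $0\le h_k\le 1$, $h_k\to 1$ pointwise. On the operator side this replaces the outer arguments by $h_k(a+tb)b$ and $bh_k(a)$.
\item The cost is $O(\|h_k(\cdot)b-b\|_{np})$. This is small for fixed large $k$ and small $|t|$ because $h_k(a+tb)\to h_k(a)$ strongly (Lemma \ref{resolventstrongCV}, which is where $b\in\cM$ is used).
\item The inner variables are handled by the identity \cite[Eq.~(3.21)]{Co22}. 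It rewrites $f^{[n]}(\lambda)h_k(\lambda_0)h_k(\lambda_n)$ through $(fh_k)^{[n]}$ and $h_k^{[n]}$, which are compactly supported so the $C_0$ case applies, plus lower-order products $h_k^{[l]}f^{[n-l]}$ handled by induction on $n$.
\item General $b\in L_p(\cM)\cap L_{np}(\cM)$ is then recovered by approximating $b$ in $L_{np}$ by bounded $\tilde b$. The difference quotients for $b$ and $\tilde b$ are compared uniformly in $t$ via the perturbation formula, which involves only $f^{[n]}$ \cite[Lemma 3.7]{Co22}.
\end{itemize}
Without an ingredient of this kind, your argument gives $n$-fold differentiability at most when $f^{(n)}\in C_0(\R)$, or when $f^{(n)}$ is uniformly continuous and $b\in L_{(n+1)p}(\cM)$.
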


Thus, our result extends Theorem~\ref{thm:PaSu} to higher orders and improves it for $p\neq2$, thereby providing a complete analogue of Theorem~\ref{Le_Sk_diff} in the setting of noncommutative $L_p$-perturbations. The above notion of differentiability is usually referred to as the G\^ateaux derivative of an operator function.  The stronger notion of Fr\'echet differentiability has also been studied for Schatten classes; see, for example, \cite{KiPoShSu14,LeSk20}. However,  the necessary and sufficient condition for Fr\'echet differentiability in $\mathcal{S}^p(\hilh)$ spaces established in \cite{CLMMcd} indicates that the condition $f \in C_b^n(\mathbb{R})$ with $f^{(n)}$ uniformly continuous is not sufficient to guarantee higher-order Fr\'echet differentiability on a general noncommutative $L_p$-space, see \cite[Comment $2$]{CLMMcd}.

\bigskip

\noindent\underline{Higher-order spectral shift functions:} 

\smallskip

The second part of this paper is devoted to the study of traces of noncommutative Taylor remainders and their integral representations in terms of spectral shift functions (SSFs). Let $f:\R \to \C$ be a sufficiently regular function, and let $a,b\in \widetilde{\cM}$ be self-adjoint operators. We define
\begin{align*}
\cR_n(f,a,b) := f(a+b) - \sum_{k=0}^{n-1} \frac{1}{k!} 
\left. \frac{d^k}{dt^k} \bigl[f(a+tb)\bigr] \right|_{t=0}.
\end{align*}
The operator $\cR_n(f,a,b)$ is referred to as the {\it noncommutative $n$-th Taylor remainder} associated with the operator-valued function $t \mapsto f(a+tb)$. Its well-definedness depends crucially on the existence of higher-order derivatives $\left. \frac{d^k}{dt^k} [ f(a+tb) ] \right|_{t=0}$, and hence on Theorem \ref{DiffLp0}. Understanding these properties is therefore essential for the analysis of $\cR_n(f,a,b)$.

Our main objective is to study trace formulas for $\cR_n(f,a,b)$; first, to ensure that $\cR_n(f,a,b)\in L_{1}(\mathcal{M})$, and then to express $\tau(\cR_n(f,a,b))$ in terms of higher-order spectral shift functions. These formulas generalize the classical first-order trace formula and play a fundamental role in perturbation theory of operators affiliated with semifinite von Neumann algebras.

\smallskip

The notion of first-order spectral shift function and the associated trace formula for $\cR_1(f,a,b)$, in the case $\cM=\bh$, originated in the work of Lifshitz \cite{Li52} in theoretical physics and was subsequently developed rigorously by Krein \cite{Kr53}. The resulting {\it Lifshitz--Krein spectral shift function} and the corresponding trace formula have since become central tools in operator theory, with far-reaching applications in noncommutative geometry \cite{ChCo97,Connes-Book}, scattering theory \cite{BiYa93,Yafaev}, index theory \cite{Push08, GeLaMaSuTo11}, and related areas.

The existence of higher-order spectral shift functions was established by Potapov, Skripka, and Sukochev. More precisely, the following result was proved in \cite{PoSkSu13}.

\medskip

\noindent{\bf The function class $\cW_n(\R)$.}
Let $n,k\in\N\cup \{0\}$ with $k\leq n$. 
Denote by $W_k(\R) \subseteq C^k(\R)$ the class of functions $f\in C^k(\R)$ such that the Fourier transform of $f^{(k)}$ is integrable. Define
\begin{align*}
\cW_n(\R)=\bigcap_{k=0}^n W_k(\R).
\end{align*}

\begin{thm}{\normalfont\cite[Theorem 1.1]{PoSkSu13}}\label{inv_thm}
Let $n \in \mathbb{N}$, and let $a,b$ be self-adjoint operators affiliated with $\mathcal{M}$ such that $b\in L_n(\cM)\cap \cM$. Then $\cR_n(f,a,b) \in L_1(\cM)\cap \cM$ and there exists a unique function 
$\eta_n=\eta_{n,a,b}\in L^1(\mathbb{R})$ satisfying
\begin{align}\label{PSS-formula-trace}
\tau(\cR_n(f,a,b)) = \int_{\mathbb{R}} f^{(n)}(t)\,\eta_n(t)\,dt,
\end{align}
for every $f\in\cW_n(\R)$ with the estimate
\begin{align*}
\|\eta_n\|_1 \leq c_n \|b\|_n^n.
\end{align*}
\end{thm}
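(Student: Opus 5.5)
The proof proceeds in three stages: first show that $\cR_n(f,a,b)\in L_1(\cM)\cap\cM$ with a trace bound in terms of a norm of $f^{(n)}$, then obtain $\eta_n$ by duality, and finally prove uniqueness.

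\textbf{Representing the remainder.} For $b\in L_n(\cM)\cap\cM$ and $f\in\cW_n(\R)$, the map $t\mapsto f(a+tb)$ is $n$-times differentiable in operator norm. This follows from the Fourier representation: since each $\widehat{f^{(k)}}$ is integrable, we can write
\[
f^{[k]}(\lambda_0,\dots,\lambda_k)=\int_{\Delta_k}\int_\R \widehat{f^{(k)}}(\xi)\,e^{i\xi(s_0\lambda_0+\dots+s_k\lambda_k)}\,d\xi\,ds .
\]
Consequently the multiple operator integrals are integrals of products of unitary groups. Taylor's formula with integral remainder then gives
\[
\cR_n(f,a,b)=T^{a+b,a,\dots,a}_{f^{[n]}}(b,\dots,b).
\]
Alternatively, one can write it as $\int_0^1 (1-t)^{n-1}\,\phi^{(n)}(t)\,dt/(n-1)!$ with $\phi^{(n)}(t)$ given by the multiple operator integral.

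\textbf{The key estimate.} The central inequality to establish is
\[
|\tau(\cR_n(f,a,b))|\le c_n\,\|f^{(n)}\|_\infty\,\|b\|_n^n .
\]
A crude Hölder bound applied to the representation only yields a constant depending on $\|\widehat{f^{(n)}}\|_1$, so a sharper argument is needed. I would use the fact that $T_{f^{[n]}}(b,\dots,b)$ with $n$ copies of $b\in L_n$ lands in $L_1$. The point is that the trace of the multiple operator integral reduces, by cyclicity, to an $(n-1)$-fold integral applied to $n-1$ copies of $b$ and then paired with $b$. One then uses a boundedness result for multiple operator integrals $L_{n}\times\cdots\times L_n\to L_{n/(n-1)}$ whose norm is controlled by $\|f^{(n)}\|_\infty$, valid when all spectral variables coincide (the symmetric case). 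To make this applicable, I would use a change of variables in $t$ to symmetrize the remainder. Since all variables coincide after taking the trace, a result of Potapov--Sukochev type bounds such integrals by $\|f^{(n)}\|_\infty$. I expect this $\|f^{(n)}\|_\infty$-bound to be the main obstacle.

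\textbf{Existence and uniqueness of $\eta_n$.} Once the bound holds, the functional $f^{(n)}\mapsto\tau(\cR_n(f,a,b))$ is well defined on $\{f^{(n)}:f\in\cW_n\}$. It is well defined because $\cR_n$ annihilates polynomials of degree less than $n$, and $f^{(n)}$ determines $f$ modulo such polynomials within $\cW_n$. The functional is bounded in the sup norm, and $C_c^\infty\subset\{f^{(n)}\}$ is dense in $C_0(\R)$. By the Hahn--Banach and Riesz theorems it is therefore given by a complex measure $\mu$ with $\|\mu\|\le c_n\|b\|_n^n$.

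Absolute continuity of $\mu$ is obtained by a standard reduction. First prove it for $a$ bounded and $b$ of finite trace, where one can exhibit $\mu$ explicitly via spectral measures and the $L_1$-density of the relevant spectral distributions. Then pass to the general case through approximations $b_j\to b$ in $L_n$ (for example, $b_j=e_jbe_j$ with $\tau$-finite projections $e_j$) and $a_j\to a$ in the resolvent sense. The densities $\eta_{n,a_j,b_j}$ form a Cauchy sequence in $L^1$ because of the Lipschitz-type estimate $\|\eta_{a,b}-\eta_{a,b'}\|_1\lesssim \|b-b'\|_n(\|b\|_n+\|b'\|_n)^{n-1}$, which follows from the same multilinear bound applied to differences.

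Uniqueness follows because $\{f^{(n)}:f\in\cW_n\}\supset C_c^\infty(\R)$, which separates $L^1$ functions.
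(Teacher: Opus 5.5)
Your outline follows the Potapov--Skripka--Sukochev argument, which the paper reuses for Theorem~\ref{main_thm_Lp}. Its steps are Taylor's integral form, the bound $|\tau(\cR_n(f,a,b))|\le c_n\|f^{(n)}\|_\infty\|b\|_n^n$, Hahn--Banach and Riesz, and absolute continuity by approximating $b$ in $L_n$ by elements of $L_1(\cM)\cap\cM$, using that absolutely continuous measures are closed in total variation.

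\textbf{The gap: absolute continuity for general $a$.} You prove it only for bounded $a$ and then pass to general $a$ via resolvent approximation $a_j\to a$. But your Cauchy estimate $\|\eta_{a,b}-\eta_{a,b'}\|_1\lesssim\|b-b'\|_n(\|b\|_n+\|b'\|_n)^{n-1}$ keeps $a$ fixed, so nothing controls $\|\eta_{n,a_j,b}-\eta_{n,a,b}\|_1$. Resolvent convergence gives at best $\int f^{(n)}\,d\mu_{n,a_j,b}\to\int f^{(n)}\,d\mu_{n,a,b}$ for fixed smooth $f$, i.e.\ weak-$*$ convergence of uniformly bounded measures, and that does not preserve absolute continuity. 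Your base-case mechanism (densities of spectral measures) is also unavailable, since the spectral measure of $a$ may be purely singular.

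\textbf{The fix.} Drop the restriction on $a$. For arbitrary $a$ and $b\in L_1(\cM)\cap\cM$,
\[
\tau(\cR_n(f,a,b))=\int_\R f'(\lambda)\xi(\lambda)\,d\lambda-\sum_{k=1}^{n-1}\tau\bigl(T^{(a)^{k+1}}_{f^{[k]}}((b)^k)\bigr),
\]
where $\xi\in L^1(\R)$ is Krein's spectral shift function. This is where absolute continuity genuinely enters, through spectral averaging along $a+tb$. Each trace in the sum equals $\int f^{(k)}\,d\nu_k$ for a finite, possibly singular, measure $\nu_k$. Integrating by parts $n-1$, resp.\ $n-k\ge 1$, times writes everything as $\int f^{(n)}\eta\,d\lambda$ with $\eta$ locally integrable. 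Then $\mu_{n,a,b}-\eta\,d\lambda$ has vanishing $n$-th distributional derivative, so it is a polynomial density and $\mu_{n,a,b}$ is absolutely continuous. After that, only $b$ needs to be approximated.

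\textbf{Smaller points.}

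First, the Lipschitz estimate in $b$ does not follow from simply differencing the Taylor integrands: replacing $a+tb$ by $a+tb'$ in the operator slots costs $f^{[n+1]}$. One route that uses only $\|f^{(n)}\|_\infty$ is to differentiate along $V_s=b'+s(b-b')$ with $b,b'\in L_1(\cM)\cap\cM$. Cyclicity gives $\frac{d}{ds}\tau(\cR_n(f,a,V_s))=\tau\bigl(\cR_{n-1}(f',a,V_s)(b-b')\bigr)$, and Theorem~\ref{bddMOI} gives $\|\cR_{n-1}(f',a,V_s)\|_{n/(n-1)}\le c\|f^{(n)}\|_\infty\|V_s\|_n^{n-1}$.

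Second, the key trace estimate needs no change of variables in $t$, since each Taylor integrand already has all slots equal to $a+tb$. Cyclicity turns its trace into $\tau\bigl(T^{(a+tb)^n}_{\tilde f}((b)^{n-1})\,b\bigr)$ with $\tilde f(\lambda_0,\ldots,\lambda_{n-1})=f^{[n]}(\lambda_0,\ldots,\lambda_{n-1},\lambda_0)$. Bounding this operator (for $n\ge2$) from $L_n\times\cdots\times L_n$ into $L_{n/(n-1)}$ by $\|f^{(n)}\|_\infty$ is the hard core of the cited theorem (recorded as Corollary~\ref{cr:trace-bdd}). Your proposal only names it.

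Third, $C_c^\infty(\R)\not\subset\{f^{(n)}:f\in\cW_n(\R)\}$: an $n$-fold antiderivative of $g\in C_c^\infty(\R)$ lies in $\cW_n(\R)$ only if $g$ has vanishing moments of order $<n$. Uniqueness still holds, because $\int f^{(n)}\eta\,dt=0$ for all $f\in C_c^\infty(\R)$ forces $\eta$ to be a polynomial, hence $\eta=0$ in $L^1(\R)$.
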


The function $\eta_n$ is called the {\it $n$-th order spectral shift function}, and \eqref{PSS-formula-trace} is referred to as the {\it $n$-th order trace formula} associated with the Taylor remainder $\cR_n(f,a,b)$. In the case $\cM=\bh$, the existence of $\eta_1$ is due to Lifshitz and Krein, while the existence of $\eta_2$ was established by Koplienko \cite{Ko84}, who further conjectured the existence of $\eta_n$ for all $n\geq2$. After a partial solution in \cite{DySk-JFA, Sk-Indiana}, this longstanding conjecture was fully resolved in \cite{PoSkSu13}. We note that a recent preprint \cite{Oliver} establishes a connection between index theory and higher-order spectral shift functions for a concrete class of operators. Related trace formulas for $\cR_1(f,a,b)$ were also investigated in \cite{DySk14, PoSu14jst}. Indeed, in \cite{PoSu14jst} the Krein trace formula for $\cR_1(f,a,b)$ was established for perturbations $b\in L_1(\cM)$ (not necessarily bounded) and for functions $f$ whose derivative $f'$ has an integrable Fourier transform. To the best of authors' knowledge, this remains the strongest available result under $L_1$-perturbations.

Given the existence of $\eta_n$, it is natural to ask whether the assumptions on the operators $a$ and $b$, or the regularity conditions imposed on the scalar function $f$, can be weakened. This problem has attracted considerable attention. In the setting $\cM=\bh$, various extensions of \eqref{PSS-formula-trace} were obtained in \cite{ChSk18, PoSkSu15, Sk17Adv, NuSk22, NuSk23}, where $b$ is allowed to be bounded but not necessarily compact, subject to suitable summability conditions. More recently, symmetric and relative $\Sp^n(\hilh)$-perturbations (not necessarily bounded) were considered in \cite{AlPe25} for $n=1$ and in \cite{ChNuPr24} for $n\geq1$. For general semifinite von Neumann algebras, the cases where $b = b^* \in \cM$ and $(a-i)^{-1}$ is either $\tau$-compact or belongs to $L_n(\cM) \cap \cM$ in \eqref{PSS-formula-trace} were studied in \cite{Sk18, ChPrSk23}. However, higher-order trace formulas for not necessarily bounded perturbations $b\in L_n(\cM)$ have not been addressed to date.

The optimal class of scalar functions for which the Lifshitz--Krein trace formula (with $\cM=\bh$) holds was identified in \cite{Pe16}. For higher-order trace formulas ($n\geq2$), however, the determination of an optimal function class remains open. Notable progress was made for $n=2$ in \cite{CoLemSkSu19, ChCoGiPr24}, where the class of admissible functions was considerably enlarged to include the Besov space $B^2_{\infty 1}(\R)$, extending earlier results of \cite{Pe05}. For $n>2$, a trace formula for $\cR_n(f,a,b)$ (in an appropriate sense) was established in \cite{AlPe11} for $b\in \Sp^n(\hilh)$ and $f$ belonging to the Besov space $B^n_{\infty 1}(\R)$, which contains $\cW_n(\R)$.

In the present paper, working in the general setting of an arbitrary semifinite von Neumann algebra $\cM$ and perturbations $b\in L_n(\cM)\cap L_{n^{2}}(\cM)$, we extend the function class $\cW_n(\R)$ considered in \cite{PoSkSu13}. More precisely, in Theorem~\ref{main_thm_Lp} we introduce a new function class $\fW_n(\R)$ (see~\eqref{Funct_Class-1}) containing the class $\cW_n(\R)$ and extend Theorem~\ref{inv_thm} to perturbations $b\in L_n(\cM)\cap L_{n^{2}}(\cM)$ (not necessarily bounded) and functions $f\in\fW_n(\R)$, thereby closing the gap between bounded and unbounded perturbations in this setting. When specialized to the case $\cM=\bh$, this class is further enlarged to $\fQ_n(\R)$ (see \eqref{Funct_Class-2}). Within this refined framework, we establish a trace formula for a suitably modified version of $\cR_n(f,a,b)$, in the spirit of \cite{AlPe11}, valid for functions in the class $\mathcal{C}_n(\R)$ (see \eqref{new-functionclass}), which contains $B^n_{\infty1}(\R)$; see Theorem \ref{trace_formula_thm2} and Remark \ref{similar to peller}.

In the study of both differentiability and trace formulas, we make extensive use of the theory of multiple operator integrals. Multiple operator integrals were introduced in \cite{Pe06, AzCaDoSu09} as an extension of the theory of double operator integrals \cite{BS1, PaWiSu02}. Through numerous developments and applications over the past twenty years, these techniques have established themselves as potent and versatile tools in perturbation theory \cite{PoSkSu13, PoSkSu14, PoSkSu15, Sk17Adv, CoLemSkSu19, PoSuToZa19, AlPe25, ChCoGiPr24, ChNuPr24}. A detailed account of the theory of multiple operator integrals, together with its extensive applications in perturbation theory, can be found in the survey article \cite{Peller-survey} and the book \cite{SkTo_Book}. In this paper, along with a number of auxiliary technical results, we establish a strengthened perturbation formula for multiple operator integrals (Theorem~\ref{Prtb-Thm}), which is one of the key ingredients in the proof of our main results. The major difference from previously known versions is discussed prior to the statement of the theorem.

The paper is organized as follows. In Section~\ref{sec:prel}, we recall several auxiliary function classes and introduce the definition of multiple operator integrals together with their basic properties. In Section~\ref{sec:differentiation}, we prove Theorem~\ref{DiffLp0}. Section~\ref{sec:traceLp} is devoted to the study of trace formulas for noncommutative $L_p$-perturbations, while in Section~\ref{sec:traceSp} we investigate trace formulas for perturbations in the Schatten $p$-classes.

\section{Preliminaries}\label{sec:prel}
We recall some classes of functions that will be used throughout the paper.

\medskip

\noindent {\bf The class $\fA_n$.} Let $n \in \N$. We denote by $\fA_n$ the class of functions
$\varphi : \R^{n+1} \to \C$ admitting a representation of the form
\begin{align}\label{function_representation}
\varphi(\lambda_1, \ldots, \lambda_{n+1})= \int_{\Omega} \alpha_0(\lambda_1, \omega) \cdots \alpha_{n}(\lambda_{n+1}, \omega)
\, d\nu(\omega),
\end{align}
where $(\Omega,\nu)$ is a measure space with a complex measure $\nu$ and
\[
\alpha_i : \R \times \Omega \to \C, \qquad i=0,\ldots,n,
\]
are bounded measurable functions (with respect to the Borel $\sigma$-algebra on
$\R$) such that
\[
\int_{\Omega} \|\alpha_0(\cdot,\omega)\|_\infty \cdots \|\alpha_{n}(\cdot,\omega)\|_\infty
\, d|\nu|(\omega) < \infty.
\]

\medskip

\noindent {\bf The class $\mathfrak{C}_n$}.
Let $\fC_n$ denote the subset of $\fA_n$ of functions admitting the representation \eqref{function_representation},
where $\bigcup_{k=1}^{\infty} \Omega_k=\Omega$ for a growing sequence $\{\Omega_k\}_{k=1}^\infty$ of measurable subsets of $\Omega$
such that the families $\{\alpha_j(\cdot, \omega)\}_{\omega\in\Omega_k}$, $j=0,\ldots,n$ are uniformly bounded and uniformly continuous. A norm on $\fC_n$ is defined by
\[
\|\varphi\|_{\fC_n}:=\inf\int_{\Omega}
\|\alpha_0(\cdot,\omega)\|_\infty \cdots \|\alpha_{n}(\cdot,\omega)\|_\infty
\, d|\nu|(\omega) < \infty,
\]
where the infimum is taken over all possible representations \eqref{function_representation} with $\alpha_j, j=0,\dots,n$, as above. See \cite{PoSkSu13} for more informations about this space.

\medskip

\noindent{\bf The Besov space.}  Let $w_0 \in C^\infty(\R)$ be such that its Fourier transform is supported in $[-2,-1/2]\cup[1/2,2]$ and $\widehat w_0(y) + \widehat w_0(y/2) = 1$ for $1 \le y \le 2,$
and define $ w_k(x) = 2^k w_0(2^k x)$ for $ x \in \R,\, k \in \mathbb{Z},$ where $\mathbb{Z}$ is the set of integers. Then, following \cite{CoLemSkSu19,LeSk20,Pe06}, we define for $n\in\mathbb{N}\cup\{0\}$,
\[B^n_{\infty 1}(\mathbb{R})=
\left\{
f \in C^n(\mathbb{R}) :
\|f^{(n)}\|_\infty
+
\sum_{k\in\mathbb{Z}} 2^{nk} \|f * w_k\|_\infty
< \infty
\right\}.\]
For more background on Besov spaces, we refer to \cite{Pe76,St70,Tr83}.

\medskip

Recall that for $f\in D^{n}(\R)$, the $n$-th order divided difference $f^{[n]}:\R^{n+1}\to\C$ is defined recursively by
\[
f^{[0]}(\lambda_{0}) := f(\lambda_{0}), \qquad
f^{[n]}(\lambda_{0},\ldots,\lambda_{n}) := \lim_{\lambda\to\lambda_{n}}
\frac{f^{[n-1]}(\lambda_{0},\ldots,\lambda_{n-2},\lambda) - f^{[n-1]}(\lambda_{0},\ldots,\lambda_{n-1})}{\lambda-\lambda_{n-1}}.
\]
Moreover, if $f^{(n)}$ is bounded, then $f^{[n]}$ is also bounded.

\medskip

The following lemma collects results from \cite[p.~9]{SkTo_Book} and \cite[Theorem~4.3.4]{SkTo_Book}.

\begin{lma}\label{setinclussion}
Let $n\in\N$. Then the following inclusions hold:
\[
C_c^{n+1}(\R)\subseteq \cW_n(\R)\subseteq B_{\infty 1}^n(\R).
\]
Moreover, if $f\in B_{\infty 1}^n(\R)$, then $f^{[n]}\in \fC_n$.
\end{lma}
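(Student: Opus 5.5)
The lemma collects known facts, so the plan is to check each inclusion directly, using standard Fourier-analytic arguments, and to cite \cite{SkTo_Book} for the final membership $f^{[n]}\in\fC_n$.

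\emph{First inclusion, $C_c^{n+1}(\R)\subseteq\cW_n(\R)$.} Let $f\in C_c^{n+1}(\R)$ and $0\le k\le n$. Then $f^{(k)}\in C_c^1(\R)$, so both $f^{(k)}$ and $f^{(k+1)}$ lie in $L^1\cap L^2$. Hence $\widehat{f^{(k)}}$ is bounded. Moreover $y\,\widehat{f^{(k)}}(y)=c\,\widehat{f^{(k+1)}}(y)\in L^2$. By Cauchy--Schwarz,
\[
\int_{|y|>1}|\widehat{f^{(k)}}(y)|\,dy\le \Bigl(\int_{|y|>1} y^{-2}\,dy\Bigr)^{1/2}\|y\widehat{f^{(k)}}\|_{L^2}<\infty .
\]
Together with boundedness of $\widehat{f^{(k)}}$ on $[-1,1]$, this gives $\widehat{f^{(k)}}\in L^1$ for every $k\le n$. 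So $f\in W_k(\R)$ for all $k$, that is, $f\in\cW_n(\R)$.

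\emph{Second inclusion, $\cW_n(\R)\subseteq B^n_{\infty1}(\R)$.} Let $f\in\cW_n(\R)$.
\begin{itemize}
\item[(i)] Since $\widehat{f^{(n)}}\in L^1$, Fourier inversion makes $f^{(n)}$ bounded, with $\|f^{(n)}\|_\infty\le\|\widehat{f^{(n)}}\|_1$.
\item[(ii)] For the dyadic sum, write $2^{nk}(f*w_k)=2^{nk}\,(f^{(n)}*\psi_k)$, where $\widehat{\psi_k}(y)=\widehat{w_0}(2^{-k}y)/(iy)^n$. This is legitimate because $\widehat{w_k}$ vanishes near $0$.
\item[(iii)] Hence $\|f*w_k\|_\infty\le \int|\widehat{f^{(n)}}(y)|\,|\widehat{w_0}(2^{-k}y)|\,|y|^{-n}\,dy$.
\item[(iv)] On the support of $\widehat{w_0}(2^{-k}\cdot)$ we have $|y|\sim 2^k$. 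Each $y$ lies in at most two or three dyadic annuli. Summing over $k$ therefore gives $\sum_k 2^{nk}\|f*w_k\|_\infty\le C\|\widehat{f^{(n)}}\|_1$.
\end{itemize}
The lower-order conditions ($k<n$) are only needed to make sense of $f$ as a tempered distribution with $f\in C^n$. This step is routine but requires care with the distributional Fourier transform, since $f$ itself need not be integrable. I would handle it by working with $f^{(n)}$ throughout, which is a genuine bounded function with integrable Fourier transform.

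\emph{Membership $f^{[n]}\in\fC_n$ for $f\in B^n_{\infty1}(\R)$.} This is the substantive part, and I expect it to be the main obstacle; I would rely on \cite[Theorem~4.3.4]{SkTo_Book} (originating with Peller \cite{Pe06}). The idea is as follows.
\begin{itemize}
\item[(a)] Decompose $f^{(n)}=\sum_k f^{(n)}*w_k$ up to a polynomial ambiguity, which is removed because $f^{(n)}$ is bounded. Each piece $f_k=f*w_k$ is then a bounded function whose Fourier transform is supported in $\{|y|\le 2^{k+1}\}$.
\item[(b)] For such band-limited $f_k$, Peller's representation writes $f_k^{[n]}$ as an integral of products of exponentials and translated entire functions of exponential type:
\[
f_k^{[n]}(\lambda_0,\dots,\lambda_n)=\int \alpha_0(\lambda_0,\omega)\cdots\alpha_n(\lambda_n,\omega)\,d\nu(\omega),
\]
with total mass bounded by $C\,2^{nk}\|f_k\|_\infty$.
\item[(c)] These $\alpha_j$ are uniformly bounded and uniformly continuous on pieces $\Omega_m$ exhausting $\Omega$, as required by the definition of $\fC_n$.
\item[(d)] Summing over $k$ gives $\|f^{[n]}\|_{\fC_n}\le C\|f\|_{B^n_{\infty1}}$, provided the sum of the representations converges absolutely.
\end{itemize}
Two points need checking here. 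The uniform continuity in (c) holds because exponentials $e^{i\lambda s}$ with $|s|\le 2^{k+1}$ on a fixed $\Omega_m$ are equicontinuous. The sum in (d) converges absolutely by the $B^n_{\infty1}$ norm, which also handles the contribution of $f^{(n)}$ itself.
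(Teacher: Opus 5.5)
Your proposal is correct and essentially matches the paper, which proves this lemma purely by citing \cite[p.~9]{SkTo_Book} for the inclusions and \cite[Theorem~4.3.4]{SkTo_Book} for $f^{[n]}\in\fC_n$. Your Plancherel/Cauchy--Schwarz and dyadic-summation arguments are sound, self-contained versions of the cited inclusions, and you rely on the same theorem for the last claim. One small slip in your sketch of that theorem: a bounded polynomial is a constant, not necessarily zero. For example, $f(x)=x^n$ has $f*w_k=0$ for all $k$ but $f^{(n)}=n!$. So the residual constant $c$ in $f^{(n)}-\sum_k f^{(n)}*w_k$ must be kept; it contributes the constant $c/n!$ to $f^{[n]}$, which trivially lies in $\fC_n$.
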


\medskip

We now recall a fundamental tool, namely multiple operator integrals, which are essential for the analysis of differentiability of operator functions and of trace formulas for Taylor remainders.

\smallskip

\noindent{\bf Multiple operator integral: Potapov-Skripka-Sukochev's approach.} Following \cite{PoSkSu13} and \cite{PoSuToZa19}, let us recall the definition and the result regarding the boundedness of multiple operator integrals associated to a divided difference. For a self-adjoint operator $a$ affiliated with $\mathcal{M}$, we will denote by $E_a$ its spectral measure, and we set $E_a^{l,m} = E_a\left(\left[ \frac{l}{m},\frac{l+1}{m} \right)\right)$, for every $m\in \mathbb{N}$ and $l\in \mathbb{Z}$.

\begin{dfn}\label{moi_pss}
Let $n\in\N$, and let $1\le p_1,\ldots,p_n\leq\infty$ be such that $\frac{1}{p}:=\sum_{k=1}^n \frac{1}{p_k} \leq 1$. Let $a_1,\ldots, a_{n+1}$ be self-adjoint operators affiliated with $\mathcal{M}$, $x_{k}\in L_{p_{k}}(\mathcal{M})$ for $1\le k\le n$, and let $\varphi:\R^{n+1}\to\C$ be a bounded Borel function. Assume that for every $\tilde{x} := (x_1,\ldots,x_n)$ and every $m\in \mathbb{N}$, the limit
\begin{align*}
S_{\varphi,m}(\tilde{x}):= \lim\limits_{N\to +\infty} \sum_{-N\leq l_1,\ldots, l_{n+1} \leq N} \varphi\left(\frac{l_1}{m},\ldots,\frac{l_{n+1}}{m} \right) E_{a_1}^{l_1,m}x_1 E_{a_2}^{l_2,m} \cdots E_{a_n}^{l_n,m} x_n E_{a_{n+1}}^{l_{n+1},m}
\end{align*}
exists in $L_p(\mathcal{M})$, that
$$S_{\varphi,m} : L_{p_1}(\mathcal{M}) \times \cdots \times L_{p_n}(\mathcal{M}) \to L_p(\mathcal{M})$$
is a bounded multilinear operator, and that the sequence $\{S_{\varphi,m}\}_{m\geq 1}$ converges pointwise to a multilinear operator denoted by $T_{\varphi}^{a_1, \ldots, a_{n+1}}$. In that case, according to Banach-Steinhaus theorem, $T_{\varphi}^{a_1,\ldots,a_{n+1}}$ is a bounded multilinear operator $L_{p_1}(\mathcal{M}) \times \cdots \times L_{p_n}(\mathcal{M}) \to L_p(\mathcal{M})$. 
\end{dfn}

Below, we state that for a large class of functions $\varphi$, the multilinear operator $T_\varphi$ is bounded.

\begin{thm}{\normalfont\cite[Theorem 5.3]{PoSkSu13} and \cite[Theorem 25]{PoSuToZa19}}\label{bddMOI}
Let $n\in\N$, and let $1<p_1,\ldots,p_n<\infty$ be such that $0<\frac{1}{p}:=\sum_{k=1}^n \frac{1}{p_k}<1$. Let $a_1,\ldots, a_{n+1}$ be self-adjoint operators affiliated with $\mathcal{M}$, and let $f\in C^n(\R)$ be such that $f^{(n)}$ is bounded. Then $T_{f^{[n]}}^{a_1,\ldots,a_{n+1}}:L_{p_1}(\mathcal{M})\times \cdots\times L_{p_n}(\mathcal{M})\to L_{p}(\mathcal{M})$ is bounded and there exists a constant $c_{p,n}>0$ such that
$$\left\| T_{f^{[n]}}^{a_1,\ldots,a_{n+1}} \right\| \leq c_{p,n} \|f^{(n)}\|_{\infty}.$$
\end{thm}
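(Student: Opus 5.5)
The plan is to prove the bound first for the discretized operators $S_{f^{[n]},m}$ of Definition~\ref{moi_pss}, with a constant independent of $m$ and of the truncation parameter $N$. Banach--Steinhaus then passes it to $T_{f^{[n]}}^{a_1,\ldots,a_{n+1}}$, once I show that $S_{f^{[n]},m}$ converges pointwise; that convergence is the second half of the plan.

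\emph{Reduction to discrete multipliers.} For fixed $m$, the spectral projections $E_{a_j}^{l,m}$, $l\in\mathbb{Z}$, form a partition of unity. So $S_{\varphi,m}$ is a multilinear ``Schur multiplier'' with symbol $\varphi$ restricted to the lattice $(\frac1m\mathbb{Z})^{n+1}$. The whole problem is therefore a uniform bound for such lattice multipliers when $\varphi=f^{[n]}$.

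\emph{Structure of $f^{[n]}$.} Start from the Hermite--Genocchi formula
\[
f^{[n]}(\lambda_0,\ldots,\lambda_n)=\int_{\Delta_n} f^{(n)}(s_0\lambda_0+\cdots+s_n\lambda_n)\,ds ,
\]
where $\Delta_n$ is the standard simplex. Guided by the Potapov--Skripka--Sukochev approach, I would decompose this function into finitely many pieces of two kinds:
\begin{enumerate}[(i)]
\item pieces that separate variables through one-variable divided differences of auxiliary functions with bounded derivatives, such as iterated primitives of $f^{(n)}$;
\item pieces obtained from these by composing with ``triangular truncations'', i.e.\ multiplication by indicators of the cones $\{\lambda_i<\lambda_j\}$.
\end{enumerate}
On the lattice, such indicators act as triangular projections. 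These are bounded on $L_q(\cM)$ for $1<q<\infty$ because $L_q(\cM)$ is a UMD space.

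\emph{Induction on $n$.}
\begin{enumerate}[(a)]
\item The base case $n=1$ is the Potapov--Sukochev theorem: $\|T_{f^{[1]}}^{a_1,a_2}\|_{L_{p_1}\to L_{p_1}}\le c_{p_1}\|f'\|_\infty$. Its proof uses the UMD property and a vector-valued Marcinkiewicz multiplier theorem; I would take the same argument, uniform in $m$, at the level of $S_{f^{[1]},m}$.
\item For the inductive step, the recursion for $f^{[n]}$ in the variables $\lambda_{n-1},\lambda_n$ together with the decomposition above expresses $S_{f^{[n]},m}(x_1,\ldots,x_n)$ as a finite combination of compositions of:
\begin{itemize}
\item $(n-1)$-linear operators of the form $S_{g^{[n-1]},m}$, with $\|g^{(n-1)}\|_\infty\lesssim\|f^{(n)}\|_\infty$;
\item first-order operators $S_{h^{[1]},m}$;
\item triangular truncations.
\end{itemize}
Each factor is bounded by the inductive hypothesis, the base case, or UMD respectively. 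H\"older's inequality with the exponents $\frac1p=\sum_k\frac1{p_k}$ then gives $\|S_{f^{[n]},m}\|\le c_{p,n}\|f^{(n)}\|_\infty$.
\end{enumerate}
The hypotheses $1<p_k<\infty$ and $0<\frac1p<1$ are needed precisely so that every intermediate space is a reflexive UMD space $L_q(\cM)$ with $1<q<\infty$.

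\emph{Convergence in $m$.} To show that $S_{f^{[n]},m}\to T$ pointwise, I would argue by density. For $x_k\in L_{p_k}\cap\cM$ with finite-trace support, and $f$ replaced by smooth compactly supported approximants, $f^{[n]}$ lies in $\fC_n$ (Lemma~\ref{setinclussion}). For such symbols, uniform continuity of the $\alpha_j$ gives convergence directly. The uniform bound from the previous steps then extends convergence to all $x_k$, and to general $f$ by a weak-type approximation argument.

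The main obstacle I expect is the inductive decomposition: producing the triangular-truncation representation of $f^{[n]}$ on the lattice so that each constant depends only on $p,n$ and $\|f^{(n)}\|_\infty$, with no dependence on lower derivatives of $f$. As a first attempt, I would follow the ``polynomial integral momenta'' device of \cite{PoSkSu13}: expand the simplex integral along the ordering of the $\lambda_i$.
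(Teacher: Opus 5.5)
\paragraph{Verdict: there is a genuine gap.} The paper does not prove this statement; it imports it from \cite{PoSkSu13} (and from \cite{PoSuToZa19} for distinct $a_j$). Your outer architecture is the right one: bounds for the lattice sums $S_{f^{[n]},m}$ uniform in $m$ and $N$, followed by a limit in $m$. But the step you defer is the whole theorem, and the mechanism you sketch for it does not work.

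\paragraph{Why your building blocks are not enough.} You allow divided differences $h^{[1]}$ with $\|h'\|_\infty\lesssim\|f^{(n)}\|_\infty$, lower-order symbols $g^{[n-1]}$, and triangular truncations, glued by composition. Already for $n=2$ the natural splitting produces symbols outside this list. On the cone $\lambda_0<\lambda_1<\lambda_2$, the Peano kernel of $f^{[2]}$ is the hat function with knots $\lambda_0,\lambda_1,\lambda_2$, so
\[
f^{[2]}(\lambda_0,\lambda_1,\lambda_2)=\frac{\lambda_1-\lambda_0}{\lambda_2-\lambda_0}\cdot\frac{\int_{\lambda_0}^{\lambda_1}f''(t)\,(t-\lambda_0)\,dt}{(\lambda_1-\lambda_0)^2}+\frac{\lambda_2-\lambda_1}{\lambda_2-\lambda_0}\cdot\frac{\int_{\lambda_1}^{\lambda_2}f''(t)\,(\lambda_2-t)\,dt}{(\lambda_2-\lambda_1)^2}.
\]
The second factors are weighted averages of $f''$ with \emph{linear} weights. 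By contrast, $h^{[1]}(\lambda_i,\lambda_j)$ is an average of $h'$ against a normalized indicator. Rewriting the linear weights through such averages leads to auxiliary functions like $\int_0^\lambda t f''(t)\,dt$ or the second primitive of $f''$, whose derivatives are unbounded. So your phrase ``auxiliary functions with bounded derivatives, such as iterated primitives of $f^{(n)}$'' is false beyond the first primitive.

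The recursion in $(\lambda_{n-1},\lambda_n)$ does not rescue the induction either. It gives a first-order divided difference of $\lambda\mapsto f^{[n-1]}(\lambda_0,\ldots,\lambda_{n-2},\lambda)$, whose parameters are the other spectral variables. That is not a composition of a single $T_{h^{[1]}}$ with an $(n-1)$-linear operator. Splitting it into two terms instead produces the unbounded factor $(\lambda_{n-1}-\lambda_n)^{-1}$ and lower divided differences controlled only by $f^{(n-1)}$, which need not be bounded.

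\paragraph{What is actually missing.} You need an $L_q(\cM)$-bound, with constant $\lesssim\|f^{(n)}\|_\infty$, for transformers whose symbols are weighted averages such as $(\lambda-\mu)^{-k-1}\int_\mu^\lambda f^{(n)}(t)(t-\mu)^k\,dt$ with $k\ge1$. If you split as in the display, you also need control of the factors coupling the middle variables, such as $\chi_{\{\lambda_0<\lambda_1<\lambda_2\}}\frac{\lambda_1-\lambda_0}{\lambda_2-\lambda_0}$, which are not triangular truncations. For $k\ge1$ these averages are not divided differences of Lipschitz functions, so your base case (the Potapov--Sukochev theorem) does not apply to them as it stands. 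Handling exactly such symbols is the role of the polynomial integral momenta of \cite{PoSkSu13}, and it is the part your proposal leaves open.

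\paragraph{A smaller point: the limit in $m$.} A ``weak-type approximation'' of $f$ gives no uniformity in $m$, hence no norm convergence of $S_{f^{[n]},m}(x_1,\ldots,x_n)$. Localize instead, in three steps.
\begin{enumerate}
\item Take $x_k$ spectrally supported in $[-R,R]$ with respect to both $a_k$ and $a_{k+1}$. This is a dense class; finite-trace support is not the relevant notion. Only values of $f^{[n]}$ near $[-R,R]^{n+1}$ then enter.
\item Since $f^{(n)}$ is uniformly continuous on compacts, write $f=g+h+P$ on a neighbourhood of $[-R,R]$, with $g\in C_c^{n+1}(\R)$, $P$ a polynomial of degree $<n$, and $\|h^{(n)}\|_\infty<\epsilon$.
\item Your uniform bound (applied to $h$) and Lemma~\ref{nice_moi_bdd} (applied to $g$) then make the sequence in $m$ Cauchy.
\end{enumerate}
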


\begin{rmrk}
In \cite[Theorem 5.3]{PoSkSu13}, the theorem is proved for the case of the same spectral measure. In \cite[Theorem 2.2]{LeSk20}, it is extended to different spectral measures, but in the setting of Schatten classes, while in \cite[Theorem 25]{PoSuToZa19}, it is stated for different spectral measures in the general case.
\end{rmrk}

The following result provides a simple explicit formula for the multilinear operator $T_\varphi$ when $\varphi\in\fC_n$.

\begin{lma}{\normalfont\cite[Lemma 3.5]{PoSkSu13}}\label{nice_moi_bdd}
Let $1\leq p_j\leq \infty$, with $1\leq j\leq n$, be such that
\[ 0\leq \frac{1}{p} := \frac{1}{p_1}+\cdots+\frac{1}{p_n}\leq 1.\]
Let $a_1,\ldots,a_{n+1}$ be self-adjoint operators affiliated with $\cM$. For every $\varphi\in \mathfrak{C}_n$, the operator
$$T_\varphi^{a_1,\ldots,a_{n+1}} : L_{p_1}(\cM)\times\cdots\times L_{p_n}(\cM) \to L_p(\cM)$$
exists and is bounded  with
\[\|T_\varphi^{a_1,\ldots,a_{n+1}}\| \leq \|\varphi\|_{\mathfrak{C}_n}.\]
Moreover, given the decomposition \eqref{function_representation} of the function $\varphi$, the operator $T_\varphi^{a_1,\ldots,a_{n+1}}$ can be represented as the Bochner integral
\begin{align}\label{moi-peller}
T_\varphi^{a_1,\ldots,a_{n+1}}(b_1,b_2,\ldots,b_n) = \int_{\Omega} \alpha_0(a_1,\omega)\,b_1 \alpha_1(a_2,\omega)\,b_2\cdots b_n\alpha_n(a_{n+1},\omega) \,d\nu(\omega).
\end{align}
\end{lma}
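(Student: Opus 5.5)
The plan is to work directly from a fixed representation \eqref{function_representation} of $\varphi\in\fC_n$ and to show three things. First, for each $m$ the discretized sums $S_{\varphi,m}(\tilde x)$ exist and equal a Bochner integral of the same shape as \eqref{moi-peller}, with each $\alpha_j$ replaced by a step function. Second, these integrals converge as $m\to\infty$ to the right-hand side of \eqref{moi-peller}. Third, the norm estimate follows from H\"older's inequality, after which we take the infimum over all representations.

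\emph{Step 1 (fixed $m$).} For $m\in\N$ set
\[
\alpha_j^{m}(\lambda,\omega):=\alpha_j\bigl(\tfrac{l}{m},\omega\bigr)\quad\text{for }\lambda\in\bigl[\tfrac lm,\tfrac{l+1}m\bigr).
\]
Let $P_N^{(j)}:=E_{a_j}\bigl([-N/m,(N+1)/m)\bigr)$. Expanding $\varphi(l_1/m,\dots,l_{n+1}/m)$ by \eqref{function_representation} and using linearity of the integral gives
\[
\sum_{|l_i|\le N}\varphi\bigl(\tfrac{l_1}{m},\dots\bigr)E^{l_1,m}_{a_1}x_1\cdots E^{l_{n+1},m}_{a_{n+1}}
=\int_\Omega \alpha_0^m(a_1,\omega)P_N^{(1)}x_1\,\alpha_1^m(a_2,\omega)P_N^{(2)}\cdots x_n\,\alpha_n^m(a_{n+1},\omega)P_N^{(n+1)}\,d\nu(\omega).
\]
Here each $\alpha_j^m(a_{j+1},\omega)$ is defined by Borel functional calculus and has norm at most $\|\alpha_j(\cdot,\omega)\|_\infty$. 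By H\"older's inequality the integrand is bounded in $L_p$ by $\prod_j\|\alpha_j(\cdot,\omega)\|_\infty\prod_k\|x_k\|_{p_k}$, which is $|\nu|$-integrable.

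Now let $N\to\infty$. We have $P_N^{(j)}\to 1$ strongly, and for $x\in L_q(\cM)$ with $q<\infty$ this gives $P_N x\to x$ and $xP_N\to x$ in $L_q$. A telescoping argument combined with H\"older's inequality then shows that the integrand converges pointwise in $\omega$. Dominated convergence for Bochner integrals yields existence of the limit and the formula
\[
S_{\varphi,m}(\tilde x)=\int_\Omega \alpha_0^m(a_1,\omega)x_1\cdots x_n\alpha_n^m(a_{n+1},\omega)\,d\nu(\omega),
\]
together with $\|S_{\varphi,m}\|\le\int_\Omega\prod_j\|\alpha_j(\cdot,\omega)\|_\infty\,d|\nu|$. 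For the factors with $p_k=\infty$, one uses the convergence of a neighbouring factor with finite exponent. In the degenerate case $p=\infty$, convergence in the strong operator topology is the natural replacement.

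\emph{Step 2 ($m\to\infty$).} Fix $\omega\in\Omega_k$. Uniform continuity of the family $\{\alpha_j(\cdot,\omega)\}_{\omega\in\Omega_k}$ gives $\|\alpha_j^m(\cdot,\omega)-\alpha_j(\cdot,\omega)\|_\infty\le\rho_k(1/m)\to0$, where $\rho_k$ is a common modulus of continuity. Hence $\alpha_j^m(a_{j+1},\omega)\to\alpha_j(a_{j+1},\omega)$ in operator norm. Telescoping the product and applying H\"older's inequality shows that the integrand converges in $L_p$ for every $\omega\in\bigcup_k\Omega_k=\Omega$. The integrands are uniformly dominated by $\prod_j\|\alpha_j(\cdot,\omega)\|_\infty\prod_k\|x_k\|_{p_k}$, so dominated convergence gives $S_{\varphi,m}(\tilde x)\to$ the right-hand side of \eqref{moi-peller}. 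This proves that $T_\varphi^{a_1,\ldots,a_{n+1}}$ exists and equals \eqref{moi-peller}. The uniform bound from Step 1 passes to the limit, and taking the infimum over all admissible representations yields $\|T_\varphi^{a_1,\ldots,a_{n+1}}\|\le\|\varphi\|_{\fC_n}$.

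\emph{Main obstacle.} The delicate point is justifying that $\omega\mapsto \alpha_0(a_1,\omega)x_1\cdots x_n\alpha_n(a_{n+1},\omega)$ is strongly measurable into $L_p(\cM)$, so that the Bochner integrals make sense, and handling the $N$-limit when some $p_k=\infty$. For measurability, I would first treat the step-function integrands of Step 1. Each is a countable sum of terms of the form $\bigl(\prod_j\alpha_j(l_j/m,\omega)\bigr)\times$ a fixed operator, so its measurability reduces to measurability of scalar functions of $\omega$. Strong measurability of the limit integrand in Step 2 then follows, since it is a pointwise limit in $L_p$ of strongly measurable functions. If the case $p=\infty$ causes trouble, I would restrict to $p<\infty$ (which suffices for later use) or interpret the integral in the weak$^*$ sense.
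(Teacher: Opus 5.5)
Your proof is correct and is essentially the standard argument behind the cited result \cite[Lemma~3.5]{PoSkSu13}, which the paper quotes without proof: express $S_{\varphi,m}$ as the Bochner integral built from the step-function approximants $\alpha_j^m$, let $m\to\infty$ using the uniform continuity encoded in $\fC_n$ together with dominated convergence, and take the infimum over representations. Your caution about $p=\infty$ is justified, since with the norm limit required in Definition~\ref{moi_pss}, already $\varphi\equiv 1$, $x_1=1$ and an unbounded $a_1=a_2$ make $\lim_N P_N^{(1)}x_1P_N^{(2)}$ fail to exist in $\cM$, so that case must be read in the strong operator topology as you say; for $p<\infty$, your telescoping should absorb each $P_N^{(j)}-1$ into the fixed part of the product on whichever side contains some $x_k$ with $p_k<\infty$, and that $x_k$ need not be an immediate neighbour.
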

The right-hand side of \eqref{moi-peller} is known as a multiple operator integral. It was introduced by Peller \cite{Pe06} for $\cM=\bh$, and independently by Azamov et al. \cite{AzCaDoSu09} for general $\cM$.

The following estimate can be obtained from \cite[Theorem 5.3]{PoSkSu13} and Lemma~\ref{nice_moi_bdd}, together with the cyclicity of the trace and H\"older's inequalities for noncommutative $L_p$-space elements, by following the same line of argument as in the proof of \cite[Eq.~(5.33)]{PoSkSu13} (see also \cite[Theorem~2.5]{ChPrSk23}).

\begin{crl}\label{cr:trace-bdd}
Let $n\in\N$. For $n\geq 2$, let $1<p_1,\ldots,p_n<\infty$ satisfy $\sum_{k=1}^n \frac{1}{p_k}=1,$ and for $n=1$ set $p_1=1$. Let $a_1,\ldots,a_n$ be self-adjoint operators affiliated with $\cM$, and let
$b_i\in L_{p_i}(\cM)$ for $1\leq i\leq n$. Then there exists a constant $c_n>0$ (depending upon $p_1,\ldots, p_n$) such that, for every $f \in C^n(\R)$ with $f^{[n]} \in \mathfrak{C}_n$, 
\begin{align*}
\left|\tau\!\left(T_{f^{[n]}}^{a_1,\ldots,a_n,a_1}\big(b_1,\ldots,b_n\big)\right)\right|\leq c_n\,\|f^{(n)}\|_\infty\,\|b_1\|_{p_1}\cdots\|b_n\|_{p_n}.
\end{align*}
\end{crl}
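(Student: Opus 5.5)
The plan is to reduce the trace of the multiple operator integral to a multiple operator integral of order $n-1$ with a cyclically "closed" spectral measure, and then apply the boundedness in Theorem~\ref{bddMOI}. We use the representation \eqref{moi-peller} from Lemma~\ref{nice_moi_bdd}.

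\textbf{Case $n\ge2$.} Since $f^{[n]}\in\fC_n$, fix a decomposition $f^{[n]}(\lambda_1,\ldots,\lambda_{n+1})=\int_\Omega \alpha_0(\lambda_1,\omega)\cdots\alpha_n(\lambda_{n+1},\omega)\,d\nu(\omega)$. Lemma~\ref{nice_moi_bdd} then gives
\[
T_{f^{[n]}}^{a_1,\ldots,a_n,a_1}(b_1,\ldots,b_n)=\int_\Omega \alpha_0(a_1,\omega)b_1\alpha_1(a_2,\omega)\cdots b_n\alpha_n(a_1,\omega)\,d\nu(\omega).
\]
This is a Bochner integral in $L_1(\cM)$, because $\sum 1/p_k=1$. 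The trace is a bounded functional on $L_1(\cM)$, so it passes inside the integral. Cyclicity then lets us move $\alpha_n(a_1,\omega)$ to the front, which gives
\[
\tau\Bigl(\int_\Omega (\alpha_n\alpha_0)(a_1,\omega)\,b_1\alpha_1(a_2,\omega)b_2\cdots\alpha_{n-1}(a_n,\omega)\,b_n\,d\nu(\omega)\Bigr).
\]
The integrand is $\tilde T(b_1,\ldots,b_{n-1})\,b_n$, where $\tilde T=T^{a_1,\ldots,a_n}_{\psi}$ and
\[
\psi(\lambda_1,\ldots,\lambda_n)=f^{[n]}(\lambda_1,\ldots,\lambda_n,\lambda_1).
\]
By Lemma~\ref{nice_moi_bdd} this $\psi$ lies in $\fC_{n-1}$.

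\textbf{Rewriting $\psi$.} The key identity is that $\psi$ is itself an $(n-1)$-th divided difference, namely
\[
\psi=g^{[n-1]}\quad\text{or a derivative in the repeated variable,}\quad f^{[n]}(\lambda_1,\ldots,\lambda_n,\lambda_1)=\partial_{\lambda_1}\bigl(\ldots\bigr).
\]
By symmetry of divided differences, $f^{[n]}(\lambda_1,\lambda_1,\lambda_2,\ldots,\lambda_n)$ is exactly the object handled in \cite[Eq.~(5.33)]{PoSkSu13}. Following that argument, we realise it as a $\fC$-type or Schur-multiplier combination whose multiple operator integral, on $L_{p_1}\times\cdots\times L_{p_{n-1}}\to L_{q}$ with $1/q=1-1/p_n$, is bounded by $c\|f^{(n)}\|_\infty$. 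This bound comes from \cite[Theorem 5.3]{PoSkSu13}, i.e.\ Theorem~\ref{bddMOI}, which applies because every $p_k\in(1,\infty)$ and $0<1/q<1$.

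\textbf{Concluding the case $n\ge2$.} Hölder's inequality then gives
\[
|\tau(\tilde T(b_1,\ldots,b_{n-1})b_n)|\le\|\tilde T(b_1,\ldots,b_{n-1})\|_q\,\|b_n\|_{p_n}\le c_n\|f^{(n)}\|_\infty\prod_k\|b_k\|_{p_k}.
\]
To justify interchanging the trace and the integral, and identifying the resulting integral with $\tilde T$, we first take $b_i\in L_{p_i}\cap\cM$ with finite-trace support and then pass to general $b_i$ by density and the continuity provided by Lemma~\ref{nice_moi_bdd}.

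\textbf{Case $n=1$.} Here $\tau(T^{a_1,a_1}_{f^{[1]}}(b_1))=\int_\Omega\tau\bigl((\alpha_1\alpha_0)(a_1,\omega)b_1\bigr)\,d\nu(\omega)$. The integrand equals $\tau(f'(a_1)b_1)$, since $f^{[1]}(\lambda,\lambda)=f'(\lambda)$ and the spectral calculus is linear in the representing function. This is bounded by $\|f'\|_\infty\|b_1\|_1$.

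\textbf{Main obstacle.} The hardest step is the estimate for the diagonal-restricted symbol $\psi$ in terms of $\|f^{(n)}\|_\infty$ alone, rather than $\|f^{[n]}\|_{\fC_n}$. Only Theorem~\ref{bddMOI} controls this, so we must show that the MOI with symbol $\psi$ coincides with a genuine divided-difference MOI of the form handled in \cite[(5.33)]{PoSkSu13}, for example by approximating $f$ by functions in $\cW_n$ and using the triangular-truncation argument there. As a first attempt we would simply reproduce that argument verbatim with distinct $a_i$, using \cite[Theorem 25]{PoSuToZa19} in place of the single-spectral-measure result.
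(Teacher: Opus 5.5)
Your reduction is the one the paper intends. Lemma~\ref{nice_moi_bdd} gives $T_{f^{[n]}}^{a_1,\ldots,a_n,a_1}(b_1,\ldots,b_n)$ as an $L_1(\cM)$-valued Bochner integral, so $\tau$ passes inside. Cyclicity then merges $\alpha_n(a_1,\omega)$ with $\alpha_0(a_1,\omega)$, and the result is $\tau\bigl(T_\psi^{a_1,\ldots,a_n}(b_1,\ldots,b_{n-1})\,b_n\bigr)$ with $\psi(\lambda_1,\ldots,\lambda_n)=f^{[n]}(\lambda_1,\ldots,\lambda_n,\lambda_1)$. H\"older with $1/q=1-1/p_n$ finishes the proof once $\|T_\psi\|\le c\,\|f^{(n)}\|_\infty$ is known. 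Your $n=1$ case is also fine, except that it is the $\omega$-integral, not the integrand, that equals $\tau(f'(a_1)b_1)$. Like you, the paper takes that last estimate from the argument behind \cite[Eq.~(5.33)]{PoSkSu13}.

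However, you describe how that estimate is obtained incorrectly, and the routes you propose would fail. $\psi$ is not an $(n-1)$-th divided difference of any function. Divided differences are symmetric in all their variables, while $\psi$ is not: if $f(\lambda)=\lambda^{n+1}$ on an interval $I$, then $\psi=2\lambda_1+\lambda_2+\cdots+\lambda_n$ on $I^n$. So there is no general identity turning $T_\psi$ into a divided-difference MOI. Theorem~\ref{bddMOI}, and \cite[Theorem~25]{PoSuToZa19} in the form recorded there, do not apply to $\psi$. The missing idea is the Hermite--Genocchi formula: integrating out the simplex coordinate attached to the repeated variable gives
\[
\psi(\lambda_1,\ldots,\lambda_n)=\int s_0\,f^{(n)}\bigl(s_0\lambda_1+s_1\lambda_2+\cdots+s_{n-1}\lambda_n\bigr)\,ds_1\cdots ds_{n-1},
\]
with the integral over $\{s_j\ge 0,\ s_1+\cdots+s_{n-1}\le 1\}$ and $s_0=1-s_1-\cdots-s_{n-1}$. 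This weighted simplex integral is a ``polynomial integral momentum''. The boundedness theory of \cite[Section~5]{PoSkSu13} is developed for this whole class of symbols, with divided differences as the unweighted case. It gives $\|T_\psi\|_{L_{p_1}\times\cdots\times L_{p_{n-1}}\to L_q}\le c\,\|f^{(n)}\|_\infty$, and this is what ``the line of argument of (5.33)'' refers to. For distinct $a_1,\ldots,a_n$ you need the multi-operator version of this momentum estimate, for which the paper points to \cite[Theorem~2.5]{ChPrSk23}. With this replacing your ``key identity'', the rest of your argument goes through.
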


\section{Differentiability of operator functions in noncommutative $L_p$-spaces}\label{sec:differentiation}

We first establish perturbation formulas for multiple operator integrals, which are essential for proving the differentiability results in this section. In view of \cite[Theorem~7.4]{PaWiSu02} and \cite[Theorem~7]{PoSu12}, we have the following:

\begin{thm}\label{Perturbation-Theorem}
Let $1<p<\infty$. Suppose that $a,b$ are self-adjoint operators affiliated with $\cM$ and $a-b\in L_{p}(\mathcal{M})$. Let $f\in\textnormal{Lip}(\R)$, then
\begin{align*}
T^{a,b}_{f^{[1]}}(a-b)=f(a)-f(b) \ \in L_p(\mathcal{M}).
\end{align*}
\end{thm}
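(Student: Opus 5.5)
The plan is to reduce to the case where $a$ and $b$ are bounded. There the identity is a finite-dimensional-type computation with spectral projections. Then we pass to the limit using the uniform bound from Theorem~\ref{bddMOI} with $n=1$. That theorem gives $\|T^{a,b}_{f^{[1]}}\|_{L_p\to L_p}\le c_p\|f'\|_\infty$ for arbitrary self-adjoint $a,b$ affiliated with $\cM$ and $1<p<\infty$. The first step is to reduce to $f\in C^1$ with $f'$ bounded. For a general Lipschitz $f$ we mollify, $f_\varepsilon=f*\rho_\varepsilon$. Then $\|f_\varepsilon'\|_\infty\le \mathrm{Lip}(f)$, and $f_\varepsilon\to f$ uniformly. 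Hence $f_\varepsilon(a)-f_\varepsilon(b)\to f(a)-f(b)$ in the measure topology. We also need $f_\varepsilon^{[1]}\to f^{[1]}$ boundedly off the diagonal; on the diagonal the value of $f^{[1]}$ has to be fixed by a convention, for instance $\limsup$ of the difference quotients or an a.e. derivative. The multiple operator integrals $T^{a,b}_{f_\varepsilon^{[1]}}(a-b)$ are uniformly bounded in $L_p$. Along a subsequence they converge weakly in $L_p$, since $L_p$ is reflexive. The limit can be identified with $T^{a,b}_{f^{[1]}}(a-b)$ by a dominated convergence argument for double operator integrals, tested against $L_{p'}$ elements. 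This is standard double operator integral theory: the double operator integral is weak$^*$-continuous in the symbol under bounded pointwise convergence.

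The core case is $f\in C^1_b$. Here one first treats the situation where $a,b\in\cM$ (or both are bounded after spectral truncation). In that case the algebraic identity
\[
T^{a,b}_{f^{[1]}}(a-b)=T^{a,b}_{\psi}(\mathbf 1),\qquad \psi(\lambda,\mu)=f^{[1]}(\lambda,\mu)(\lambda-\mu)=f(\lambda)-f(\mu),
\]
holds. The right-hand side is $f(a)-f(b)$ because the symbol separates variables. To justify this via Definition~\ref{moi_pss}, we compute on discretizations: the sums $\sum f^{[1]}(l_1/m,l_2/m)E_a^{l_1,m}(a-b)E_b^{l_2,m}$ differ from $\sum (f(l_1/m)-f(l_2/m))E_a^{l_1,m}E_b^{l_2,m}$ by errors of order $\|f'\|_\infty/m$ times bounded quantities. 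Alternatively, one can verify the identity for $f$ with $f'$ having an integrable Fourier transform using Lemma~\ref{nice_moi_bdd} and the formula $f^{[1]}(\lambda,\mu)=\int_0^1\!\int \widehat{f'}(s)e^{is\theta\lambda}e^{is(1-\theta)\mu}\,ds\,d\theta$. One then extends to $C^1_b$ by approximating $f'$ boundedly and pointwise, again using the uniform bound of Theorem~\ref{bddMOI}.

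To remove boundedness of $a,b$ while keeping $a-b\in L_p(\cM)$, we take spectral cut-offs $P_N=E_a([-N,N])$, $Q_N=E_b([-N,N])$. We then compare $P_N(f(a)-f(b))Q_N$ with $P_N T^{a,b}_{f^{[1]}}(a-b)Q_N$, using that the double operator integral intertwines with spectral projections: $P_NT^{a,b}_\varphi(x)Q_N=T^{a,b}_{\varphi}(P_NxQ_N)$. On these compressions both sides involve only the bounded parts $aP_N$ and $bQ_N$. Moreover $P_N(a-b)Q_N=aP_N\cdot P_NQ_N-P_NQ_N\cdot bQ_N$, so the bounded-case identity applies in its two-operator form. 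As $N\to\infty$, $P_N,Q_N\to\mathbf 1$ strongly, and compressions converge in $L_p$ for $p<\infty$. This yields the identity in $L_p$, and in particular $f(a)-f(b)\in L_p(\cM)$.

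The main obstacle is this last localization step. In the unbounded setting, $f(a)-f(b)$ is a priori only $\tau$-measurable. We must make sure the compressed identity is exact and not merely approximate. We also need to justify limits of Definition~\ref{moi_pss} sums under compression in the measure topology. I would first try to prove directly that the discretized sums $S_{f^{[1]},m}(a-b)$ satisfy $P_NS_{f^{[1]},m}(a-b)Q_N\to P_N(f(a)-f(b))Q_N$. Failing that, I would cite \cite[Theorem~7.4]{PaWiSu02} and \cite[Theorem~7]{PoSu12} for the measure-topology version and only add the $L_p$ upgrade via Theorem~\ref{bddMOI}.
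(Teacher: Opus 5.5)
\textbf{Relation to the paper.} The paper gives no argument of its own for this statement. It simply invokes \cite[Theorem~7.4]{PaWiSu02} together with \cite[Theorem~7]{PoSu12}, so your fallback is exactly the paper's proof. Your own route (discretization for bounded operators, spectral cut-offs, mollification) differs from the mechanism behind the cited result. That mechanism needs no bounded-case computation: it uses $\tau$-finite projections $p_\alpha\uparrow\mathbf 1$ with $\|ap_\alpha-p_\alpha a\|_p\le 1$ and the weak convergence $ap_\alpha-p_\alpha b\to a-b$ in $L_p$, the same device reused in the proof of Theorem~\ref{Prtb-Thm}. As written, your route has genuine gaps.

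\textbf{The bounded core case.} The identity $T^{a,b}_{f^{[1]}}(a-b)=T^{a,b}_\psi(\mathbf 1)$ is only formal. If $\tau(\mathbf 1)=\infty$ then $\mathbf 1\notin L_p(\cM)$. Moreover, for $f\in C^1_b$ the symbol $f^{[1]}$ is in general not a bounded Schur multiplier on $\cM$, since $C^1$ functions need not be operator Lipschitz. So multiplicativity of double operator integrals cannot be used at $\mathbf 1$, and this is precisely the difficulty the cited theorem resolves.

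Your discretization estimate is also wrong as stated. Set $a_m=\sum_l\frac{l}{m}E_a^{l,m}$, define $b_m$ analogously, and let $X_m=(a-a_m)-(b-b_m)$ and $f_m(\lambda)=f(\lfloor m\lambda\rfloor/m)$. Then
\[
S_{f^{[1]},m}(a-b)=f_m(a)-f_m(b)+\sum_{l_1,l_2}f^{[1]}\bigl(\tfrac{l_1}{m},\tfrac{l_2}{m}\bigr)E_a^{l_1,m}X_mE_b^{l_2,m}.
\]
The remainder is a block Schur multiplier applied to $X_m\in\cM$ with $\|X_m\|_\infty\le 2/m$. But $X_m\notin L_p$, the multiplier has in general no $m$-uniform bound on $\cM$, and summing the $O(m^2)$ blocks termwise gives $O(m)$, not $O(\|f'\|_\infty/m)$.

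For bounded $a,b$ this can be repaired:
\begin{enumerate}[(i)]
\item There are only $N=O(m)$ blocks in each variable.
\item The factorization $f^{[1]}(\frac{l_1}{m},\frac{l_2}{m})=\langle e_{l_1},y_{l_2}\rangle$ in $\C^N$, with $\|y_{l_2}\|\le\sqrt N\|f'\|_\infty$, bounds the multiplier on $\cM$ by $\sqrt N\|f'\|_\infty$. Hence the remainder is $O(m^{-1/2})$ in operator norm.
\item Since $\|f_m-f\|_\infty\le\|f'\|_\infty/m$, the sums converge in norm to $f(a)-f(b)$. They also converge in $L_p$ to $T^{a,b}_{f^{[1]}}(a-b)$, and the two limits agree in measure.
\end{enumerate}
The same computation with $a'y-yb'$, $y\in\cM$, gives the two-operator form your localization relies on; you only treated $y=\mathbf 1$.

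\textbf{The localization step.} Compressions of $f(a)-f(b)$ need not converge in measure; already $P_N\not\to\mathbf 1$ in measure when $\tau(\mathbf 1)=\infty$. So $L_p$-convergence of the compressions of $T^{a,b}_{f^{[1]}}(a-b)$ alone does not finish the argument. Set $Z=f(a)-f(b)-T^{a,b}_{f^{[1]}}(a-b)$, which lies in $\widetilde{\cM}$ when $a,b\in\widetilde{\cM}$. From $P_NZQ_N=0$ for all $N$ you must still conclude $Z=0$. One way is via the local measure topology, e.g.\ by showing $P_NZQ_Ne\to Ze$ in measure for every $\tau$-finite projection $e$.

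\textbf{The reduction from Lipschitz to $C^1_b$.} This is a second gap, and the tools you name do not close it.
\begin{itemize}
\item Theorem~\ref{bddMOI} only covers $f\in C^1$.
\item Definition~\ref{moi_pss} samples $f^{[1]}$ at diagonal grid points, so for non-differentiable $f$ even the existence of $T^{a,b}_{f^{[1]}}$ is not supplied.
\item That definition is not, in general, continuous under bounded pointwise convergence of symbols. Also, $f_\varepsilon'\to f'$ only Lebesgue-a.e., while the diagonal part of the spectral product measure may charge Lebesgue-null sets.
\end{itemize}
Your weak-compactness argument does yield $f(a)-f(b)\in L_p(\cM)$ with $\|f(a)-f(b)\|_p\le c_p\,\mathrm{Lip}(f)\,\|a-b\|_p$. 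What it does not yield is the representation $f(a)-f(b)=T^{a,b}_{f^{[1]}}(a-b)$ for non-$C^1$ Lipschitz $f$; that is exactly the input that must come from the Potapov--Sukochev citation. For the paper's purposes this is harmless, since the theorem is only applied to $f\in C^1_b$, in Theorems~\ref{DiffLp} and~\ref{main_thm_Lp}.
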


\begin{lma}\label{MOIweakstarcont}
Let $n\in\N$ and let $1<p_0,\ldots, p_{n-1}<\infty$ be such that $0<\frac{1}{p} := \sum_{k=0}^{n-1} \frac{1}{p_k} <1$. Suppose that $a_1,\ldots,a_{n+1}$ are self-adjoint operators affiliated with $\mathcal{M}$ and  $x_i \in L_{p_i}(\mathcal{M}), 1\leq i \leq n-1$.  Let $f\in C^n(\mathbb{R})$ with bounded $n$-th derivative $f^{(n)}$. Then, the map
$$\begin{array}{lrcl}
T : & L_{p_0}(\mathcal{M}) & \longrightarrow & L_{p}(\mathcal{M}) \\
& x & \longmapsto & T_{f^{[n]}}^{a_1,\ldots, a_{n+1}}(x_1, \ldots,x_{i-1},x,x_i,\ldots,x_{n-1})
\end{array}$$
is $w^*$-continuous, that is, if $(x_i)_i \to x$ weakly in $L_{p_0}(\mathcal{M})$, then $(T(x_i))_i \to T(x)$ weakly in $L_{p}(\mathcal{M})$.
\end{lma}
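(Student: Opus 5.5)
The plan is a duality argument. Since $1<p<\infty$, the space $L_p(\cM)$ is reflexive with dual $L_{p'}(\cM)$, $\frac1p+\frac1{p'}=1$, and the pairing is $(y,z)\mapsto\tau(yz)$. Weak convergence $T(x_i)\to T(x)$ in $L_p(\cM)$ therefore amounts to
\[
\tau\bigl(T(x_i)z\bigr)\to\tau\bigl(T(x)z\bigr)\qquad\text{for all } z\in L_{p'}(\cM).
\]
Fix $z$. The map $T$ is linear and bounded $L_{p_0}\to L_p$ by Theorem~\ref{bddMOI}. Hence $x\mapsto \tau(T(x)z)$ is a bounded functional on $L_{p_0}(\cM)$. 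Every such functional is weakly continuous, so this already gives the claim. A bounded linear map between Banach spaces is automatically weak--weak continuous, since $\ell\circ T$ is a bounded functional for every $\ell$ in the dual of the target.

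The main point is thus only to verify that $T$ is well defined and bounded in the stated generality. For this I apply Theorem~\ref{bddMOI} with the exponents $p_0$ placed in slot $i$ and $p_1,\dots,p_{n-1}$ in the remaining slots. The sum of reciprocals is $\frac1p\in(0,1)$, exactly the hypothesis of Theorem~\ref{bddMOI}. This yields
\[
\|T(x)\|_p\le c_{p,n}\|f^{(n)}\|_\infty\Bigl(\prod_{j=1}^{n-1}\|x_j\|_{p_j}\Bigr)\|x\|_{p_0}.
\]
Linearity of $T$ in $x$ follows from the multilinearity in Definition~\ref{moi_pss}.

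The one subtlety is that $(x_i)_i$ may be a net rather than a sequence; the argument above does not use boundedness of the net, so nothing changes. If the intended reading of $w^*$ is via the predual, reflexivity again identifies it with weak convergence. I could alternatively write $\tau(T(x)z)=\tau\bigl(x\,\tilde T(\dots,z,\dots)\bigr)$ using a cyclic adjoint multiple operator integral, but this is unnecessary. The only real obstacle is confirming that the indexing of the exponents matches the hypothesis of Theorem~\ref{bddMOI}, and it does.
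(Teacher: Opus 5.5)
Your argument is correct, and it takes a shorter route than the paper's.

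\textbf{Your route.} You use only two ingredients: the boundedness of $T\colon L_{p_0}(\cM)\to L_p(\cM)$ from Theorem~\ref{bddMOI}, and the general fact that a bounded linear map between Banach spaces is weak-to-weak continuous (compose with any functional on the target). The lemma's ``that is'' clause defines $w^*$-continuity exactly as weak-to-weak continuity, and $L_{p_0}(\cM)$, $L_p(\cM)$ are reflexive in any case, so nothing more is needed.

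\textbf{The paper's route.} The paper instead shows that $T$ is the Banach adjoint of a cyclically permuted multiple operator integral,
\[
S\colon y\mapsto T_{f^{[n]}}^{a_{i+1},\ldots,a_{n+1},a_1,\ldots,a_i}(x_i,\ldots,x_{n-1},y,x_1,\ldots,x_{i-1}),
\]
acting from $L_q(\cM)$ to $L_r(\cM)$. It first verifies $\tau(T_{m,N}(x)y)=\tau(xS_{m,N}(y))$ on the discretized sums by cyclicity of the trace. It then passes to the limits $N\to\infty$ and $m\to\infty$, using norm convergence of the approximants.

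\textbf{Comparison.} The paper's route costs a limiting argument. In return it gives an explicit duality identity $\tau(T(x)y)=\tau(xS(y))$, which identifies the pre-adjoint as another MOI. It is also the form of argument that would still yield genuine weak$^*$-continuity where the weak and weak$^*$ topologies differ. For the statement as posed, your observation suffices. The same holds for its use in Theorem~\ref{Prtb-Thm}, where only weak convergence of a net in $L_{p_0}(\cM)$ is transported through $T$.
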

		
\begin{proof}
It is sufficient to show that $T$ is the adjoint of the bounded map
$$\begin{array}{lrcl}
S : & L_{q}(\mathcal{M}) & \longrightarrow & L_{r}(\mathcal{M}) \\
& y & \longmapsto & T_{f^{[n]}}^{a_{i+1},\ldots, a_{n+1}, a_1, \ldots, a_i}(x_i, \ldots, x_{n-1},y,x_1,\ldots,x_{i-1}),
\end{array}$$
where $\frac{1}{p}+\frac{1}{q}=1$ and $\frac{1}{p_0}+\frac{1}{r} = 1$. Let $x\in L_{p_0}(\mathcal{M})$ and $y\in L_{q}(\mathcal{M})$. Set, for $m,N \geq 1$,
\begin{align*}
&T_{m,N}(x)\\
&:=\sum_{-N\leq l_1,\ldots, l_{n+1} \leq N} f^{[n]}\left(\frac{l_1}{m},\ldots,\frac{l_{n+1}}{m} \right) E_{a_1}^{l_1,m}x_1 E_{a_2}^{l_2,m} \cdots E_{a_i}^{l_i,m} x E_{a_{i+1}}^{l_{i+1},m}  \cdots E_{a_n}^{l_n,m} x_{n-1} E_{a_{n+1}}^{l_{n+1},m}
\end{align*}
and
\begin{align*}
&S_{m,N}(y)\\
&:=\sum_{-N\leq l_1,\ldots, l_{n+1} \leq N} f^{[n]}\left(\frac{l_1}{m},\ldots,\frac{l_{n+1}}{m} \right) E_{a_{i+1}}^{l_{i+1},m}  \cdots E_{a_n}^{l_n,m} x_{n-1} E_{a_{n+1}}^{l_{n+1},m} y E_{a_1}^{l_1,m} x_1 E_{a_2}^{l_2,m} \cdots E_{a_i}^{l_i,m}.
\end{align*}
Using the linearity and the cyclicity of the trace, we get
$$
\tau(T_{m,N}(x)y) = \tau(x S_{m,N}(y)).
$$
Now, taking the limit as $N\to +\infty$, and then as $m\to +\infty$, yields
$$\tau(T(x) y) = \tau(x S(y)),$$
whence the result.
\end{proof}
		
The above implies the following perturbation formula. Note that a similar result has been obtained recently in \cite[Proposition 4.11]{ChenHong26},  providing an extension of the earlier result \cite[Theorem~28]{PoSuToZa19} in the special case where the operators $a$ and $b$ appearing in the formula are assumed to be in an appropriate $L_p$-space, while in our result, we solely require $b-a \in L_p(\cM)$.

\begin{thm}\label{Prtb-Thm}
Let $k\in\N$ and let $1 < p_0,\ldots, p_k < \infty$ be such that $  0 < \sum_{j=0}^k \frac{1}{p_j} <1$. Let $a,b,a_1,\ldots, a_k$ be self-adjoint operators affiliated with $\mathcal{M}$ and such that  $b-a\in L_{p_0}(\mathcal{M})$. Let $x_i \in L_{p_i}(\mathcal{M}), 1\leq i \leq k$. Let $f\in C^{k+1}(\mathbb{R})$ be such that $f^{(k)}, f^{(k+1)} \in C_b(\R)$. Then, for every $1 \le j \le k+1$ the following formula holds:
\begin{align*}
&T_{f^{[k]}}^{a_1, \ldots, a_{j-1}, b, a_j, \ldots, a_k}(x_1, \ldots, x_k) - T_{f^{[k]}}^{a_1, \ldots, a_{j-1}, a, a_j, \ldots, a_k}(x_1, \ldots, x_k) \\
&\ =T_{f^{[k+1]}}^{a_1, \ldots, a_{j-1}, b, a, a_j, \ldots, a_k}(x_1, \ldots, x_{j-1}, b-a, x_j, \ldots, x_k).
\end{align*}
\end{thm}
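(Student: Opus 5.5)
The plan is to reduce to a setting where the perturbation formula is classical, and then remove the extra assumptions by approximation. The approximation step will use the $w^*$-continuity of Lemma~\ref{MOIweakstarcont} together with the uniform bounds of Theorem~\ref{bddMOI}.

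\textbf{Step 1: the nice case.} Suppose first that $f^{[k]}$ and $f^{[k+1]}$ belong to $\fC_k$ and $\fC_{k+1}$, for instance $f\in C_c^{k+2}(\R)$, so that Lemma~\ref{setinclussion} applies. Suppose also that all the $x_i$ lie in $L_{p_i}(\cM)\cap\cM$.

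Write $f^{[k]}$ in the representation \eqref{function_representation}. By Lemma~\ref{nice_moi_bdd}, the difference on the left-hand side is a Bochner integral of products in which only the $j$-th factor changes, from $\alpha_{j-1}(a,\omega)$ to $\alpha_{j-1}(b,\omega)$.

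The key identity is that the divided difference of $f^{[k]}$ in its $j$-th variable is $f^{[k+1]}$, with a duplicated slot. Applying Theorem~\ref{Perturbation-Theorem} (in its Bochner-integral form for the functions $\alpha_{j-1}(\cdot,\omega)$, which are Lipschitz for suitable representations, e.g.\ Fourier exponentials) gives
\[
\alpha_{j-1}(b,\omega)-\alpha_{j-1}(a,\omega)=T^{b,a}_{\alpha_{j-1}(\cdot,\omega)^{[1]}}(b-a).
\]
Integrating over $\omega$ then produces the right-hand side.

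More cleanly, I would follow the argument of \cite[Theorem~28]{PoSuToZa19} or \cite[Proposition~4.11]{ChenHong26}. There the identity is proved at the level of the discretized sums $S_{\varphi,m}$, via the algebraic relation
\[
\sum_l E_b^{l,m}\,(b-a)\,E_a^{l',m}\,(\text{weights}),
\]
and one then passes to the limit. The only point to check is that this argument uses solely $b-a\in L_{p_0}(\cM)$ rather than $a,b\in L_p(\cM)$, which should hold once everything is expressed through $b-a$.

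\textbf{Step 2: remove the restriction on the $x_i$.} Both sides are bounded multilinear maps in $(x_1,\ldots,x_k)$ by Theorem~\ref{bddMOI}, since $\sum_j 1/p_j<1$. Density of $L_{p_i}(\cM)\cap\cM$ in $L_{p_i}(\cM)$ then extends the identity to all $x_i\in L_{p_i}(\cM)$.

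\textbf{Step 3: remove the restriction on $f$.} Given $f$ with $f^{(k)},f^{(k+1)}\in C_b(\R)$, choose $f_\epsilon\in C_c^{k+2}(\R)$ (obtained by mollifying and cutting off) such that:
\begin{itemize}
\item $f_\epsilon^{(k)}$ and $f_\epsilon^{(k+1)}$ stay uniformly bounded;
\item $f_\epsilon^{(k)}\to f^{(k)}$ and $f_\epsilon^{(k+1)}\to f^{(k+1)}$ pointwise, hence boundedly pointwise on divided differences.
\end{itemize}
Uniform convergence is not available, since $f^{(k+1)}$ need not be uniformly continuous, and this is the main obstacle: norm convergence of $T_{f_\epsilon^{[k]}}\to T_{f^{[k]}}$ is not given by Theorem~\ref{bddMOI} under mere pointwise convergence.

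I would handle it by duality. Pair both sides with $y\in L_q(\cM)$ and use the discretized sums of Definition~\ref{moi_pss}: for fixed $m$ and $N$ these are finite sums, which converge as $\epsilon\to0$. The difficulty is interchanging the limit in $\epsilon$ with the limits in $m$ and $N$. The uniform bound $c_{p,n}\|f_\epsilon^{(n)}\|_\infty$ from Theorem~\ref{bddMOI} gives equicontinuity, so the family $\{T_{f_\epsilon^{[k+1]}}(\ldots)\}_\epsilon$ is bounded in the reflexive space $L_p(\cM)$ and has weak cluster points.

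I would identify each cluster point with $T_{f^{[k+1]}}(\ldots)$ by testing against elements of the form $E_{a_1}(\Delta_1)\,y\,\cdots$ with bounded spectral projections. On such elements the operators reduce to integrals against products of spectral measures, where dominated convergence applies. The $w^*$-continuity of Lemma~\ref{MOIweakstarcont} lets the $b-a$ slot be handled by the same weak limiting argument.

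Weak limits of both sides then coincide, which proves the formula.
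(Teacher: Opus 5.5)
\textbf{The gap is in Step~3.} Your overall plan (prove the formula for nice $f$, then pass to general $f$) differs from the paper's, and the passage to general $f$ is where it breaks. The operators $T_\varphi$ of Definition~\ref{moi_pss} are defined by discretization. They come with no continuity in the symbol $\varphi$ under bounded pointwise convergence.

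Your substitute is the claim that, after testing against localized elements, ``the operators reduce to integrals against products of spectral measures, where dominated convergence applies.'' That is not available. Whenever there are two or more arguments, which is always the case on the right-hand side, the set function $\tau\bigl(E_{a_1}(A_1)x_1E_{a_2}(A_2)\cdots x_kE_{a_{k+1}}(A_{k+1})y\bigr)$ is only a polymeasure, i.e.\ separately countably additive on rectangles. In a general semifinite $\cM$ it need not extend to a measure on the product $\sigma$-algebra, so there is nothing to apply dominated convergence to. The only symbol-continuity result in the paper, Lemma~\ref{weak_conv}, concerns the $\Gamma$-construction on $\Sp^2(\hilh)$ and says nothing about Definition~\ref{moi_pss}.

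Two further points fail as written. Localizing only the dual element $y$ controls only the first and last spectral variables. And Lemma~\ref{MOIweakstarcont} gives continuity in an argument for a fixed symbol, so it does not handle the $b-a$ slot as $\epsilon\to 0$. Hence your identification of the weak cluster points is unjustified.

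\textbf{A possible repair.} Localize every argument: $x_i\mapsto E_{a_i}(\Delta)x_iE_{a_{i+1}}(\Delta)$ and $b-a\mapsto E_b(\Delta)(b-a)E_a(\Delta)$, with $\Delta=[-R,R]$.
\begin{enumerate}[(1)]
\item From the discretized sums, the localized operator depends only on the symbol on $[-R-1,R]^{k+1}$.
\item By the Hermite--Genocchi formula, on that box $(f_\epsilon-f)^{[k]}=g^{[k]}$ for some $g\in C^k(\R)$ with $\|g^{(k)}\|_\infty=\sup_{[-R-1,R]}|f_\epsilon^{(k)}-f^{(k)}|$.
\item This supremum tends to $0$ because mollification converges locally uniformly; this is where continuity of $f^{(k)},f^{(k+1)}$ is used.
\item Theorem~\ref{bddMOI} then gives norm convergence of the localized terms, and likewise at order $k+1$.
\item The $\epsilon$-uniform bounds remove the localization.
\end{enumerate}

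With this fix your route is a genuine alternative. Step~1 in its Fourier form is sound for $f\in\cW_{k+1}(\R)$, modulo a Fubini argument identifying the resulting integral as a $\fC_{k+1}$-representation of $f^{[k+1]}$. It pushes all of the unboundedness of $a,b$ into Theorem~\ref{Perturbation-Theorem} applied to $\lambda\mapsto e^{i\sigma\lambda}$, at the cost of regularizing $f$.

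\textbf{The fallback in Step~1.} Your ``more cleanly'' alternative is not acceptable as stated. Whether the arguments of \cite{PoSuToZa19,ChenHong26} survive with only $b-a\in L_{p_0}(\cM)$ is exactly what this theorem adds. For unbounded $a,b$, the range of $E_a^{l',m}$ need not lie in the domain of $b$, so expressions such as $E_b^{l,m}(b-a)E_a^{l',m}$ cannot simply be expanded.

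\textbf{What the paper does instead.} It never approximates $f$. It handles the domain issue with the truncation $\phi_N$ and the symbol identity $f^{[k+1]}\phi_N\phi_0=\psi_1\phi_N-\psi_2\phi_N$. It then uses a net of $\tau$-finite projections $p_\alpha\uparrow 1$ with $\|ap_\alpha-p_\alpha a\|_{p_0}\le 1$. This makes $b_N(bp_\alpha-p_\alpha a)a_N$ bounded in $L_{p_0}(\cM)$ and weakly convergent, so Lemma~\ref{MOIweakstarcont} applies.
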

		
\begin{proof}
To simplify the notations, we only prove the formula for $j=1$. The general case is similar. Define, for every $N\in\mathbb{N}$ and every $(\lambda_0,\ldots,\lambda_{k+1}) \in\mathbb{R}^{k+2}$,
$$
\phi_N(\lambda_0,\ldots,\lambda_{k+1})=\chi_{[-N,N]}(\lambda_0)\chi_{[-N,N]}(\lambda_1),\quad\phi_0(\lambda_0,\ldots,\lambda_{k+1})=\lambda_0-\lambda_1,
$$
$$
\psi_1(\lambda_0,\ldots,\lambda_{k+1})=f^{[k]}(\lambda_0,\lambda_2,\ldots,\lambda_{k+1})\quad\text{and}\quad\psi_2(\lambda_0,\ldots,\lambda_{k+1})=f^{[k]}(\lambda_1,\lambda_2,\ldots,\lambda_{k+1}).
$$
It follows from the definition of $f^{[k+1]}$ that
\begin{align}\label{step1PF}
f^{[k+1]}\phi_N \phi_0 = \psi_1 \phi_N - \psi_2 \phi_N.
\end{align}	
Next, from the definition of double operator integrals we have, for every $x\in L_2(\mathcal{M})$,
\begin{align}\label{step2PF}
T_{\phi_N}^{b,a}(x) = b_Nxa_N \quad \text{and} \quad T_{\phi_N \phi_0}^{b,a}(x) = bb_Nxa_N - b_Nxa_Na
\end{align}
where $a_N:=\chi_{[-N,N]}(a)$ and $b_N:=\chi_{[-N,N]}(b)$, so that $bb_N$ and $a_Na$ are elements of $\mathcal{M}$. Hence, according to \cite[Lemma 3.2 (iv)]{PoSkSu13} and \eqref{step2PF}, for every $\tau$-finite projection $p$,
\begin{align}\label{step3PF}
T_{f^{[k+1]}\phi_{N}\phi_{0}}^{b, a, a_1, \ldots, a_k}(p, x_1, \ldots, x_k) = T_{f^{[k+1]}}^{b, a, a_1, \ldots, a_k}(bb_Npa_N-b_Npa_Na, x_1, \ldots, x_k),
\end{align}
and
\begin{align}\label{step4PF}
\begin{split}
T_{\psi_1 \phi_N}^{b, a, a_1, \ldots, a_k}(p, x_1, \ldots, x_k)
& = T_{\psi_1}^{b, a, a_1, \ldots, a_k}(b_Npa_N, x_1, \ldots, x_k)\\
& = T_{f^{[k]}}^{b, a_1, \ldots, a_k}(b_Npa_Nx_1, x_2, \ldots, x_k),
\end{split}
\end{align}
where the last equality follows from \cite[Lemma 24 (i)]{PoSuToZa19}. On the other hand, according to \cite[Lemma 3.2 (iii)]{PoSkSu13},
\begin{align}\label{step5PF}
T_{\psi_2 \phi_N}^{b, a, a_1, \ldots, a_k}(p, x_1, \ldots, x_k) = b_Npa_N T_{f^{[k]}}^{a, a_1, \ldots, a_k}(x_1, x_2, \ldots, x_k).
\end{align}
We apply the latter identities to $p=p_{\alpha}$, where $(p_{\alpha})_{\alpha} \subset \mathcal{M}$ is an increasing family of $\tau$-finite projections such that $p_{\alpha} \uparrow 1$ and $\|ap_{\alpha} - p_{\alpha}a\|_{p_0} \leq 1$ for all $\alpha$. Such a sequence exists, see \cite[Proposition 6.6]{PaWiSu02}. Hence, it follows from \eqref{step1PF}, \eqref{step3PF}, \eqref{step4PF} and \eqref{step5PF} that for every $\alpha$,
\begin{align}\label{step6PF}
\begin{split}
&T_{f^{[k+1]}}^{b, a, a_1, \ldots, a_k}(bb_Np_{\alpha}a_N - b_Np_{\alpha}a_Na, x_1, \ldots, x_k)\\
& \ =T_{f^{[k]}}^{b, a_1, \ldots, a_k}(b_Np_{\alpha}a_Nx_1, x_2, \ldots, x_k) - b_Np_{\alpha}a_N T_{f^{[k]}}^{a, a_1, \ldots, a_k}(x_1, x_2, \ldots, x_k).
\end{split}
\end{align}
For the rest of the proof, let us denote $\frac{1}{p} := \sum_{j=0}^k \frac{1}{p_j}$ and $\frac{1}{q} := \sum_{j=1}^k \frac{1}{p_j}$ and notice that $1<p<\infty$ and $1<q<\infty$. As in the proofs of \cite[Lemma 5.2 and Lemma 7.3]{PaWiSu02}, the net $(b b_Np_{\alpha}a_N - b_Np_{\alpha}a_Na)_{\alpha}$ converges weakly in $L_{p_0}(\mathcal{M})$ to $bb_Na_N - b_Na_Na$. This implies, according to Lemma \ref{MOIweakstarcont}, that
\begin{align}\label{step7PF}
T_{f^{[k+1]}}^{b, a, a_1, \ldots, a_k}(bb_Np_{\alpha}a_N - b_Np_{\alpha}a_Na, x_1, \ldots, x_k) \to T_{f^{[k+1]}}^{b, a, a_1, \ldots, a_k}(bb_Na_N - b_Na_Na, x_1, \ldots, x_k)
\end{align}
weakly in $L_{p}(\mathcal{M})$. On the other hand, since $p_{\alpha} \uparrow 1$ and $x_1 \in L_{p_1}(\mathcal{M})$, $b_Np_{\alpha}a_Nx_1 \to b_Na_Nx_1$ in $L_{p_1}(\mathcal{M})$ by \cite[Lemma 5.1]{PaWiSu02}. Then, using the continuity of $T_{f^{[k]}}^{b, a_1, \ldots, a_k}$ on $L_{p_1}(\cM) \times \cdots \times L_{p_k}(\cM)$ given by Theorem \ref{bddMOI}, we get
\begin{align}\label{step8PF}
T_{f^{[k]}}^{b, a_1, \ldots, a_k}(b_Np_{\alpha}a_Nx_1, x_2, \ldots, x_k) \to T_{f^{[k]}}^{b, a_1, \ldots, a_k}(b_Na_Nx_1, x_2, \ldots, x_k)
\end{align}
in $L_{q}(\mathcal{M})$. By a similar reasoning (using the fact that $a_N T_{f^{[k]}}^{a, a_1, \ldots, a_k}(x_1, x_2, \ldots, x_k) \in L_{q}(\mathcal{M})$),
\begin{align}\label{step9PF}
b_Np_{\alpha}a_N T_{f^{[k]}}^{a, a_1, \ldots, a_k}(x_1, x_2, \ldots, x_k) \to b_Na_N T_{f^{[k]}}^{a, a_1, \ldots, a_k}(x_1, x_2, \ldots, x_k)
\end{align}
in $L_{q}(\mathcal{M})$. Hence, \eqref{step6PF}, \eqref{step7PF}, \eqref{step8PF} and \eqref{step9PF} give
\begin{align}\label{step10PF}
\begin{split}
&T_{f^{[k+1]}}^{b, a, a_1, \ldots, a_k}(bb_Na_N - b_Na_Na, x_1, \ldots, x_k)\\
& \ =T_{f^{[k]}}^{b, a_1, \ldots, a_k}(b_Na_Nx_1, x_2, \ldots, x_k) - b_Na_N T_{f^{[k]}}^{a, a_1, \ldots, a_k}(x_1, x_2, \ldots, x_k).
\end{split}
\end{align}
To finish the proof, note that $b_N, a_N \uparrow 1$ as $N\to +\infty$, so 
$$
bb_Na_N - b_Na_Na = b_N(b-a)a_N \to b-a
$$
in $L_{p_0}(\mathcal{M})$. Similarly, $b_Na_Nx_1 \to x_1$ in  $L_{p_1}(\mathcal{M})$. Hence, by boundedness of MOI, taking the limit as $N \to +\infty$ in \eqref{step10PF} gives the desired formula.
\end{proof}
		
From now on, we adopt the following notation: if $x$ is an operator affiliated with $\mathcal{M}$, then for any $k\in\N$, we denote by $(x)^{k}$ the tuple consisting of $k$ copies of $x$, that is, $(x)^k=\underbrace{x,x,\ldots, x}_{k \text{ copies}}$.
		
We now record a direct consequence of Theorem \ref{Prtb-Thm}.

\begin{crl}\label{Perturbationformulageneral}
Let $n\in\N$ and $1<p<\infty$. Let $a,b\in \widetilde{\cM}$ be self-adjoint operators such that $b\in L_{(n-1)p}(\mathcal{M})\cap L_{np}(\mathcal{M})$. Let $f\in C^{n}(\R)$ and assume that $f^{(n-1)},f^{(n)}$ are bounded. Then, for every $1\le j\le n$ and every $t\in\R$,
$$
T_{f^{[n-1]}}^{(a+tb)^j, (a)^{n-j}}((b)^{n-1}) - T_{f^{[n-1]}}^{(a)^n}((b)^{n-1}) = t \sum_{l=1}^{j} T_{f^{[n]}}^{(a+tb)^{l}, (a)^{n+1-l}}((b)^{n}).
$$
\end{crl}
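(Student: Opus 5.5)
This is a telescoping argument built on Theorem~\ref{Prtb-Thm} applied with $k=n-1$. Write the left-hand side as a sum of differences, each of which changes exactly one spectral entry from $a$ to $a+tb$:
\[
T_{f^{[n-1]}}^{(a+tb)^j,(a)^{n-j}}((b)^{n-1})-T_{f^{[n-1]}}^{(a)^n}((b)^{n-1})
=\sum_{l=1}^{j}\Bigl[T_{f^{[n-1]}}^{(a+tb)^l,(a)^{n-l}}((b)^{n-1})-T_{f^{[n-1]}}^{(a+tb)^{l-1},(a)^{n-l+1}}((b)^{n-1})\Bigr].
\]
In the $l$-th bracket the two tuples of spectral operators agree except in position $l$, which carries $a+tb$ in the first term and $a$ in the second.

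For each $l$ we apply Theorem~\ref{Prtb-Thm} with the following data:
\begin{itemize}
\item $k=n-1$ and position $j=l$;
\item the operator pair $(a+tb,\,a)$, whose difference is $(a+tb)-a=tb$;
\item the remaining spectral operators $(a+tb)^{l-1},(a)^{n-l}$;
\item all $x_i=b$.
\end{itemize}
The theorem inserts the difference $tb$ into slot $l$ of the $n$ arguments. All other arguments equal $b$, so the argument tuple becomes $(b)^{l-1},tb,(b)^{n-l}$. By multilinearity this equals $t\,T_{f^{[n]}}^{(a+tb)^l,(a)^{n+1-l}}((b)^n)$, since the spectral tuple $(a+tb)^{l-1},a+tb,a,(a)^{n-l}$ is exactly $(a+tb)^l,(a)^{n+1-l}$. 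Summing over $l$ gives the corollary. The case $t=0$ is trivial, and for $t\neq0$ the term $tb$ lies in the same space as $b$.

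The substantive step is verifying the exponent hypotheses of Theorem~\ref{Prtb-Thm}, which need $1<p_0,\dots,p_{n-1}<\infty$ with $0<\sum 1/p_j<1$. The natural choice is $p_j=np$ for every $j$, giving $\sum_{j=0}^{n-1}1/p_j=1/p\in(0,1)$. Since $b\in L_{np}(\cM)$, both $tb$ and each $x_i$ are admissible, and $a+tb$ is self-adjoint and $\tau$-measurable. Theorem~\ref{Prtb-Thm} also requires $f\in C^{n}$ with $f^{(n-1)},f^{(n)}$ bounded, which is exactly the hypothesis here.

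For $n=1$ Theorem~\ref{Prtb-Thm} does not apply, since it needs $k\ge1$. In that case the identity reads $f(a+tb)-f(a)=t\,T^{a+tb,a}_{f^{[1]}}(b)$ (with $T_{f^{[0]}}^{c}=f(c)$), which is Theorem~\ref{Perturbation-Theorem} applied to $b\in L_p(\cM)$ with $f'$ bounded.

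The only subtlety is a consistency check. The left-hand side is formally a bounded MOI on $L_{(n-1)p}(\cM)^{n-1}$ by Theorem~\ref{bddMOI}, while the argument above treats it on $L_{np}(\cM)^{n-1}$. Both definitions are limits of the same approximating sums $S_{\varphi,m}$, so they coincide on the intersection and the identity holds as an equality in $L_{p}(\cM)+L_{p(n-1)/n}(\cM)$.
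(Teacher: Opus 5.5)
This is correct and is essentially the paper's argument: the paper only says the corollary is a direct consequence of Theorem~\ref{Prtb-Thm}, and your telescoping application of that theorem (with $k=n-1$ and all exponents equal to $np$) is exactly what that remark means. Your consistency check between the two interpretations of the $T_{f^{[n-1]}}$ terms is a useful addition, but the second space should be $L_{np/(n-1)}(\cM)$, not $L_{p(n-1)/n}(\cM)$; since the two interpretations agree as $\tau$-measurable operators, the identity in fact holds in $L_p(\cM)$.
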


\begin{lma}\label{resolventstrongCV}
Let $a$ be a self-adjoint and densely defined operator on $\mathcal{H}$ and $b=b^*\in \mathcal{M}$. Then, for every $h\in C_b(\mathbb{R})$,
$$
h(a+tb) \underset{t\to 0}{\to} h(a) \quad \text{strongly.}
$$
\end{lma}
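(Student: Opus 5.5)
The plan is to establish strong convergence first for resolvents, and then pass to general $h\in C_b(\R)$ by approximation.

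\emph{Resolvents.} Fix $z\in\C\setminus\R$. Since $b$ is bounded and self-adjoint, $a+tb$ is self-adjoint on $\mathrm{Dom}(a)$ for every $t\in\R$. The second resolvent identity gives
\[
(a+tb-z)^{-1}-(a-z)^{-1}=-t\,(a+tb-z)^{-1}\,b\,(a-z)^{-1}.
\]
Both resolvents have norm at most $|\operatorname{Im} z|^{-1}$, so
\[
\bigl\|(a+tb-z)^{-1}-(a-z)^{-1}\bigr\|\le |t|\,\|b\|\,|\operatorname{Im} z|^{-2}\to 0 .
\]
This is convergence in norm, which is stronger than what we need. Consequently $g(a+tb)\to g(a)$ in norm for every $g$ in the algebra generated by the functions $\lambda\mapsto(\lambda-z)^{-1}$, $z\notin\R$. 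By Stone--Weierstrass, this algebra together with the constants is dense in the continuous functions on the one-point compactification of $\R$. Since $\|g(c)\|\le\|g\|_\infty$ for any self-adjoint $c$, a $3\varepsilon$-argument shows that $g(a+tb)\to g(a)$ in operator norm for every $g\in C_0(\R)$.

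\emph{General $h\in C_b(\R)$.} Here norm convergence can fail, so we argue vector by vector. Fix $\xi\in\hilh$ and $\varepsilon>0$. Choose a cutoff $\chi\in C_c(\R)$ with $0\le\chi\le1$ and $\chi=1$ on $[-R,R]$, where $R$ is taken so large that $\|(1-\chi)(a)\xi\|<\varepsilon$. This is possible because $\chi_R(a)\xi\to\xi$ whenever $\chi_R\uparrow 1$ pointwise, by the spectral theorem. Then write
\[
h(a+tb)\xi-h(a)\xi=\bigl[(h\chi)(a+tb)-(h\chi)(a)\bigr]\xi+h(a+tb)(1-\chi)(a+tb)\xi-h(a)(1-\chi)(a)\xi .
\]
The first term tends to $0$ by the previous step, since $h\chi\in C_c(\R)\subset C_0(\R)$. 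The last term has norm at most $\|h\|_\infty\varepsilon$. For the middle term we use $(1-\chi)(a+tb)\xi=(1-\chi)(a)\xi+\bigl[\chi(a)-\chi(a+tb)\bigr]\xi$. The bracketed difference tends to $0$ by the $C_0$ case applied to $\chi$, so the norm of the middle term has $\limsup$ at most $\|h\|_\infty\varepsilon$. Letting $\varepsilon\to0$ gives the claim.

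\emph{Main obstacle.} The only delicate point is the tail control in the last step. Convergence cannot be uniform for $h\notin C_0(\R)$, and the argument works precisely because the tail $(1-\chi)(a+tb)\xi$ is controlled through the already established $C_0$ convergence of $\chi(a+tb)$. Everything else is routine.
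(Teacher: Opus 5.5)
Your proof is correct and follows the same route as the paper: the second resolvent identity gives norm (hence strong) resolvent convergence, which is then extended to all $h\in C_b(\R)$. The only difference is that the paper gets the second step by citing Reed--Simon, Theorem VIII.20, whereas you reprove that theorem by hand, using Stone--Weierstrass for the $C_0(\R)$ case and a spectral cutoff argument for the general case.
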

		
\begin{proof}
According to \cite[Theorem VIII.20]{ReSi_book}, it is sufficient to prove that
\begin{align*}
(a+tb-i)^{-1} \underset{t\to 0}{\to} (a-i)^{-1} \quad \text{strongly.}
\end{align*}
This simply follows from the second resolvent identity. Indeed, we have the following equality on $\mathcal{H}$:
$$
(a+tb-i)^{-1}-(a-i)^{-1}=-t(a+tb-i)^{-1}b(a-i)^{-1},
$$
so that
$$
\|(a+tb-i)^{-1}-(a-i)^{-1}\|\le |t|\|b\|.
$$
This yields the convergence in norm of $(a+tb-i)^{-1} - (a-i)^{-1}$ to $0$, whence the strong convergence to $0$.
\end{proof}
		
Fix $1\le j\le n+1$ and consider the function
\begin{align}\label{MOI-map}
\psi_{f}:t\in\R\mapsto T_{f^{[n]}}^{(a+tb)^{j}, (a)^{n+1-j}}((b)^{n}) \in L_p(\mathcal{M}),
\end{align}
where $a,b\in\widetilde{\cM}$ are self-adjoint operators, $b\in L_{np}(\mathcal{M})$, and $f\in C^{n}(\R)$ with $f^{(n)}$ bounded. The following result, which characterizes when the map $\psi_{f}$ is continuous, will be used to prove our main result (Theorem \ref{DiffLp}).

\begin{ppsn}\label{continuityat0}
Let $n\in\N$ and $1<p<\infty$. Let $a,b\in\widetilde{\cM}$ be self-adjoint operators, and let $f\in C^{n}(\mathbb{R})$. Suppose that one of the following assumptions holds:
\begin{enumerate}[\normalfont(i)]
\item $f^{(n)} \in C_0(\mathbb{R})$ and $b \in L_{np}(\mathcal{M})$;\\
or
\item $f\in C^n_b(\R)$ (that is, $f', \ldots, f^{(n)}$ are bounded continuous functions) with $f^{(n)}$ uniformly continuous on $\R$ and $b\in L_{np}(\mathcal{M}) \cap L_{(n+1)p}(\mathcal{M})$;\\
or
\item $f\in C^n_b(\R)$ and $b \in L_{np}(\mathcal{M}) \cap \mathcal{M}$.
\end{enumerate}
Then $\psi_f$ in \eqref{MOI-map} is continuous at $0$.
\end{ppsn}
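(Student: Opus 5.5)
The plan is to reduce each case to a situation where $f^{(n)}$ can be replaced, uniformly in $t$, by a nicer function. The basic tool is the estimate of Theorem~\ref{bddMOI} with $p_1=\cdots=p_n=np$:
\[
\bigl\|T_{f^{[n]}}^{(a+tb)^{j},(a)^{n+1-j}}((b)^n)-T_{g^{[n]}}^{(a+tb)^{j},(a)^{n+1-j}}((b)^n)\bigr\|_p\le c_{p,n}\|f^{(n)}-g^{(n)}\|_\infty\|b\|_{np}^n .
\]
This bound is uniform in $t$, so $\psi_f$ is a uniform limit of maps $\psi_g$. It therefore suffices to prove continuity at $0$ of $\psi_g$ for $g$ in a class that is sup-norm dense, in the relevant sense, among the admissible $f^{(n)}$. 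Only $f^{(n)}$ enters $f^{[n]}$, through $f^{[n]}(\lambda_0,\ldots,\lambda_n)=\int_{\Delta_n}f^{(n)}(\sum s_i\lambda_i)\,ds$. Hence we may always replace $f$ by an $n$-fold primitive of the approximant.

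\emph{Case (ii).} Let $f_\varepsilon$ be such that $f_\varepsilon^{(n)}=f^{(n)}*\rho_\varepsilon$, with $\rho_\varepsilon$ a mollifier. Uniform continuity gives $\|f^{(n)}-f_\varepsilon^{(n)}\|_\infty\to0$, and $f_\varepsilon^{(n+1)}=f^{(n)}*\rho_\varepsilon'$ is bounded. For $f_\varepsilon$ we apply Theorem~\ref{Prtb-Thm} $j$ times, exactly as in Corollary~\ref{Perturbationformulageneral}, to replace the operators $a+tb$ by $a$ one at a time. The increment is $(a+tb)-a=tb\in L_{(n+1)p}(\cM)$, and all $x_i=b\in L_{(n+1)p}(\cM)$. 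The exponents $n+1$ copies of $(n+1)p$ satisfy $0<\sum 1/p_i=1/p<1$. This yields
\[
\psi_{f_\varepsilon}(t)-\psi_{f_\varepsilon}(0)=t\sum_{l=1}^{j}T_{f_\varepsilon^{[n+1]}}^{(a+tb)^l,(a)^{n+2-l}}((b)^{n+1}).
\]
By Theorem~\ref{bddMOI}, the $L_p$-norm of this is at most $C|t|\,\|f_\varepsilon^{(n+1)}\|_\infty\|b\|_{(n+1)p}^{n+1}\to0$. A $3\varepsilon$-argument then finishes this case.

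\emph{Case (i).} Approximate $f^{(n)}\in C_0(\R)$ uniformly by Schwartz functions $g$, and take $f_g$ to be an $n$-fold primitive of $g$. Writing $g(x)=\int\widehat g(\xi)e^{i\xi x}\,d\xi$ in the simplex formula shows $f_g^{[n]}\in\fC_n$, with representation $\alpha_k(\lambda,(s,\xi))=e^{is_k\xi\lambda}$ on $\Omega=\Delta_n\times\R$. Lemma~\ref{nice_moi_bdd} then writes $\psi_{f_g}(t)$ as a Bochner integral of products $e^{is_0\xi(a+tb)}\,b\,e^{is_1\xi(a+tb)}\cdots b\,e^{is_n\xi a}$.

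Next I check that $a+tb\to a$ in the strong resolvent sense. In $\widetilde\cM$ we have
\[
(a+tb-i)^{-1}-(a-i)^{-1}=-t(a+tb-i)^{-1}\,b(a-i)^{-1},
\]
and $b(a-i)^{-1}\in L_{np}(\cM)$, so the difference tends to $0$ in $L_{np}$. Following the proof of Lemma~\ref{resolventstrongCV}, the unitaries $e^{is\xi(a+tb)}$ then converge strongly to $e^{is\xi a}$.

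Strongly convergent bounded nets act continuously on $L_q(\cM)$ for $q<\infty$, by multiplication on either side. Hence each integrand converges in $L_p$ by telescoping, using H\"older with $n$ factors in $L_{np}$. The integrands are dominated by $|\widehat g(\xi)|\,\|b\|_{np}^n$, so dominated convergence gives $\psi_{f_g}(t)\to\psi_{f_g}(0)$.

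\emph{Case (iii).} Since $b\in L_{np}\cap\cM\subset L_{(n+1)p}$, case (ii) applies once $f^{(n)}$ is replaced by a uniformly continuous function. The difficulty is that $f^{(n)}$ is only bounded and continuous, so no uniform approximation is available. This is the main obstacle. I would first try a localization argument. Fix $R$ and a cutoff $\chi\in C_c(\R)$ equal to $1$ on $[-R,R]$, and split $f^{(n)}=\chi f^{(n)}+(1-\chi)f^{(n)}$. The first piece is uniformly continuous and is handled by case (ii).

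For the second piece, I would insert spectral projections $E_a(\R\setminus[-R+\|b\|+1,R-\|b\|-1])$ on the right. Because $b$ is bounded, the spectra of $a+tb$ and $a$ are within $\|b\|$ of each other for $|t|\le1$. Using the multiplicativity properties of MOI (\cite[Lemma 3.2]{PoSkSu13}) I would try to show the following bound, uniformly in $|t|\le1$: the contribution of the second piece is controlled by $\|f^{(n)}\|_\infty$ times $L_{np}$-norms of $b$ cut by far spectral projections of $a$ and $a+tb$. These norms tend to $0$ as $R\to\infty$ since $b\in L_{np}$. The far projections of $a+tb$ would be controlled via Lemma~\ref{resolventstrongCV} and the strong convergence of $h(a+tb)$ for bounded continuous cutoffs $h$. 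If this localization proves too delicate, the fallback is to use Lemma~\ref{resolventstrongCV} directly together with the discretized definition (Definition~\ref{moi_pss}). Here the discretized sums $S_{f^{[n]},m}$ with $b$ bounded converge strongly in $t$, and one would need an equicontinuity argument in $m$ to interchange the limits in $t$ and $m$.
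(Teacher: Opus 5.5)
Your cases (i) and (ii) are sound.

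Case (ii) is essentially the paper's argument. You approximate $f^{(n)}$ uniformly by a function with bounded derivative, use the perturbation formula of Corollary~\ref{Perturbationformulageneral} to get an $O(|t|)$ bound, and finish with an $\varepsilon/3$ argument.

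Case (i) is a self-contained substitute for the paper's appeal to \cite{ChenHong26}. One step needs a line of justification: Lemma~\ref{resolventstrongCV} used operator-norm convergence, whereas you only have $\|(a+tb-i)^{-1}-(a-i)^{-1}\|_{np}\le |t|\,\|b\|_{np}$. To pass to strong resolvent convergence you need that a norm-bounded family $x_t\in\cM$ tending to $0$ in $L_{np}(\cM)$ tends to $0$ strongly. This holds: such convergence forces $\|x_te\|_2\to0$ for every $\tau$-finite projection $e$, and on bounded sets this gives $\omega(x_t^*x_t)\to0$ for every normal state $\omega$.

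The gap is case (iii). There you only describe what you would try, and the mechanism you describe would not give the bound.
\begin{itemize}
\item Let $u$ be an $n$-fold primitive of $(1-\chi)f^{(n)}$. The simplex formula only guarantees $u^{[n]}(\lambda_0,\ldots,\lambda_n)=0$ when \emph{all} of $\lambda_0,\ldots,\lambda_n$ lie in $[-R,R]$. Projections inserted only in the last slot therefore leave a remainder (e.g.\ $\lambda_n\in[-R,R]$, $\lambda_0$ large) for which you have no smallness.
\item To exploit this vanishing you can only insert spectral projections of the operator occupying that slot, i.e.\ of $a+tb$ in the first $j$ slots. The true fact that the spectra of $a+tb$ and $a$ are $\|b\|$-close gives no comparison between $E_{a+tb}(\R\setminus[-R,R])$ and any spectral projection of $a$. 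So the projections $E_a(\R\setminus[-R+\|b\|+1,R-\|b\|-1])$ are of no use in those slots.
\item The fallback via equicontinuity in $m$ is not substantiated.
\end{itemize}

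The localization can nevertheless be completed.
\begin{itemize}
\item Take $R\in\N$, $\chi=1$ on $[-R,R]$, and $I=[-R,R)$. Since $u^{[n]}=0$ on $I^{n+1}$, splitting according to the first variable that leaves $I$ gives, at every grid point,
\[
u^{[n]}=\sum_{i=0}^{n}u^{[n]}\,\mathbf{1}_{\R\setminus I}(\lambda_i)\prod_{k<i}\mathbf{1}_{I}(\lambda_k).
\]
\item Because $\pm R$ are grid points, $\mathbf{1}_I(l/m)E_c^{l,m}=E_c(I)E_c^{l,m}$ for every self-adjoint $c$. So already at the level of the sums in Definition~\ref{moi_pss}, $\psi_u(t)$ splits into $n+1$ terms $T_{u^{[n]}}$. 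Their arguments are copies of $b$ multiplied by projections $E_{c_k}(I)$ and by one $E_{c_i}(\R\setminus I)$, where $c_k\in\{a+tb,a\}$ is the operator in slot $k$.
\item Theorem~\ref{bddMOI} then gives
\[
\|\psi_u(t)\|_p\le c_{p,n}\,\|f^{(n)}\|_\infty\,\|b\|_{np}^{n-1}\sum_{i=0}^{n}\|b\,E_{c_i}(\R\setminus I)\|_{np}.
\]
\item For $c_i=a+tb$, choose $\theta_R\in C_b(\R)$ with $0\le\theta_R\le1$, $\theta_R=1$ on $\R\setminus I$ and $\theta_R=0$ on $[-R+1,R-1]$. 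Then $\|bE_{a+tb}(\R\setminus I)\|_{np}\le\|b\,\theta_R(a+tb)\|_{np}$. By Lemma~\ref{resolventstrongCV}, as $t\to0$ this tends to $\|b\,\theta_R(a)\|_{np}\le\|bE_a(\R\setminus[-R+1,R-1])\|_{np}$, which is small for large $R$.
\item Fix $R$ first and then let $t\to0$. The remaining piece has $n$-th derivative $\chi f^{(n)}\in C_c(\R)$ and is covered directly by case (i).
\end{itemize}

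This is a genuinely different route from the paper's. The paper cuts off only the outer variables with $H_k=h_k(\lambda_0)h_k(\lambda_n)$. It then rewrites $f^{[n]}H_k$ via the Leibniz formula \cite[Eq.~(3.21)]{Co22} and argues by induction on $n$. That induction is where the boundedness of $f',\ldots,f^{(n-1)}$ and $b\in L_r(\cM)$ for all $r\ge np$ are used.

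The slot-by-slot localization uses only $f^{(n)}\in C_b(\R)$, needs no induction, and uses $b\in\cM$ only through Lemma~\ref{resolventstrongCV}. Combined with the strong resolvent convergence you obtained in case (i) for $b\in L_{np}(\cM)$, it appears to work even for unbounded $b$.
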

		
\begin{proof}
(i) Assume that $f^{(n)} \in C_0(\mathbb{R})$ and $b \in L_{np}(\mathcal{M})$.\\
Let $\epsilon > 0$. By approximation, there exists $g \in C_c^{(n+1)}(\mathbb{R})$ such that $\| f^{(n)} - g^{(n)} \|_{\infty} \leq \epsilon$. The proof then follows along the same lines as that of \cite[Proposition 4.7]{ChenHong26}.

\medskip

(ii) First, if $f\in C^{n+1}_b(\R)$, then since $f^{(n)}$ and $f^{(n+1)}$ are bounded, we have, according to Corollary \ref{Perturbationformulageneral},
\begin{align*}
\psi_f(t)-\psi_f(0)=t\sum_{l=1}^{j} T_{f^{[n+1]}}^{(a+tb)^{l}, (a)^{n+2-l}}((b)^{n+1}),
\end{align*}
so that, according to Theorem \ref{bddMOI},
\begin{align*}
\|\psi_f(t)-\psi_f(0) \|_p \leq |t| j C \|f^{(n+1)}\|_{\infty} \|b\|_{(n+1)p}^{n+1}
\end{align*}
for some constant $C>0$. This establishes the continuity of $\psi_f$ at $0$ for such $f$.

Next, let $f\in C^n_b(\R)$ with $f^{(n)}$ uniformly continuous on $\mathbb{R}$ and $\epsilon > 0$. By \cite[Lemma 2.1]{CLMMcd}, there exists $h\in C^{n+1}_b(\R)$ such that $\|f^{(n)} - h^{(n)}\|_{\infty} \leq \epsilon$. In particular, $\psi_h$ is continuous at $0$. Write, for every $t\in \mathbb{R}$,
\begin{align*}
\psi_f(t) - \psi_f(0)
& = (\psi_f(t) - \psi_h(t)) + (\psi_h(t) - \psi_h(0)) + (\psi_h(0) - \psi_f(0)) \\
& = \psi_{f-h}(t) + (\psi_h(t) - \psi_h(0)) + \psi_{h-f}(0).
\end{align*}
According to Theorem \ref{bddMOI}, there exists a constant $C'>0$ such that, for every $t$,
\begin{align*}
\| \psi_f(t) - \psi_f(0) \|_p
& \leq 2C' \| f^{(n)} - h^{(n)} \|_{\infty} + \|\psi_h(t) - \psi_h(0)\|_p\\
& \leq 2C'\epsilon + \|\psi_h(t) - \psi_h(0)\|_p,
\end{align*}
whence the continuity of $\psi_f$ at $0$.
			
\medskip
			
(iii) Assume now that $f',\ldots, f^{(n)}$ are bounded and $b \in L_{np}(\mathcal{M}) \cap \mathcal{M}$.\\
First of all, note that the assumption on $b$ implies in particular that $b \in L_{r}(\mathcal{M})$ for every $np\leq r \leq \infty$. The strategy of the proof is to reduce to the first one. Let $(h_k)_{k\geq 1}$ be a sequence of $C^{\infty}_c(\mathbb{R})$ satisfying:
$$
\forall\, k\in\N, \quad 0\leq h_k\leq 1,
\qquad\hbox{and}\qquad\forall\, s\in \mathbb{R},\quad
h_k(s)\underset{k\to\infty}{\longrightarrow} 1,
$$
and define, for every $(\lambda_0,\ldots,\lambda_n)\in \mathbb{R}^{n+1}$,
$$
H_k(\lambda_0, \ldots, \lambda_n) = h_k(\lambda_0)h_k(\lambda_n).
$$
For $t\in \mathbb{R}$, define
$$\psi^k_f(t) := T_{f^{[n]} H_k}^{(a+tb)^{j}, (a)^{n+1-j}}((b)^{n}).$$
According to \cite[Lemma 3.2]{PoSkSu13} and the definition of MOI, we have, for every $t\in \mathbb{R}$,
$$
\psi^k_f(t) = T_{f^{[n]}}^{(a+tb)^{j}, (a)^{n+1-j}}(h_k(a+tb)b, (b)^{n-2}, bh_k(a)) \quad \text{if} \ 1\leq j \leq n,
$$
and
$$
\psi^k_f(t) =  T_{f^{[n]}}^{(a+tb)^{j}, (a)^{n+1-j}}(h_k(a+tb)b, (b)^{n-2}, bh_k(a+tb)) \quad \text{if} \ j=n+1.
$$
Note that, according to Lemma \ref{resolventstrongCV},
\begin{align}\label{proofctny1}
h_k(a+tb) \to h_k(a) \quad \text{and} \quad (fh_k)(a+tb) \to (fh_k)(a) \quad \text{strongly as} \ t\to 0,
\end{align}
and since $(h_k(a))_k$ is bounded in $\mathcal{M}$ and converges strongly to $1$ as $k\to +\infty$, we have, according to \cite[Corollary 5.5.14 and Theorem 6.2.1]{Sukochev-book},
\begin{align}\label{proofctny2}
h_k(a)b \to b \quad \text{and} \quad bh_k(a) \to b \quad \text{in} \ L_{np} \ \text{as} \ k\to +\infty.
\end{align}
From now on, assume that $j=n+1$ to simplify the notations, the others cases being similar. Let $\epsilon > 0$. According to \eqref{proofctny1}, \eqref{proofctny2} and Theorem \ref{bddMOI}, for a fixed $k$ large enough and $|t|$ small enough, we have
\begin{align*}
& \| \psi_f(t) - \psi_f(0) \|_p \\
& \leq \| \psi_f(t) - \psi^k_f(t) \|_p + \|\psi^k_f(t) - \psi^k_f(0) \|_p + \|\psi^k_f(0) - \psi_f(0)\|_p \\
& \leq \|\psi^k_f(t) - \psi^k_f(0) \|_p + C \|f^{(n)}\|_{\infty} \|b\|_{np}^{n-1} \max \{ \|h_k(a+tb)b-b\|_{np}, \|bh_k(a+tb)-b\|_{np},  \\ 
& \hspace{9cm}  \|h_k(a)b-b\|_{np},  \|bh_k(a)-b\|_{np} \} \\
& \leq \|\psi^k_f(t) - \psi^k_f(0) \|_p + C \|f^{(n)}\|_{\infty} \|b\|_{np}^{n-1} \epsilon.
\end{align*}
Hence, it is sufficient to prove the continuity of $\psi^k_f$ at $0$. We prove it by induction on $n$ and with $j=n+1$. To take the integer $n$ into account in our notations, let us write, for $t\in \mathbb{R}$,
$$
\psi_{f,n}(t) = T_{f^{[n]}}^{(a+tb)^{n+1}}((b)^{n}) \quad \text{and} \quad \psi^k_{f,n}(t) =T_{f^{[n]} H_k}^{(a+tb)^{n+1}}((b)^{n}).
$$
For $n=1$, one easily checks that
$$
f^{[1]}H_k(\lambda_0,\lambda_1) = (fh_k)^{[1]}(\lambda_0,\lambda_1)h_k(\lambda_1) - h_k^{[1]}(\lambda_0,\lambda_1) (fh_k)(\lambda_1),
$$
so that
$$
\psi^k_{f,1}(t) = T_{(fh_k)^{[1]}}^{a+tb, a+tb}(b h_k(a+tb)) - T_{h_k^{[1]}}^{a+tb, a+tb}(b(fh_k)(a+tb)).
$$
Since $h_k, fh_k \in C^1_c(\mathbb{R})$, by \eqref{proofctny1} and the case (i) of this proposition, we obtain the continuity of $\psi^k_{f,1}$ at $0$. Next, assume that for some $n\in \mathbb{N}$, $\psi^k_{f,l}$ is continuous at $0$ for every $1\leq l \leq n-1$. As discussed above, this implies in particular that $\psi_{f, l}$ is continuous for every $1\leq  l \leq n-1$. According to \cite[Eq. (3.21)]{Co22},
\begin{align*}
f^{[n]}H_k(\lambda_0, \ldots, \lambda_{n}) 
& = (f h_k)^{[n]}(\lambda_0, \ldots, \lambda_{n})h_{k}(\lambda_{n})- (h_k)^{[n]}(\lambda_0, \ldots, \lambda_{n}) (f h_k)(\lambda_{n})   \\
& \hspace{0,5cm} - h_{k}(\lambda_{n})\sum_{l=1}^{n-1} (h_k)^{[l]}(\lambda_0, \ldots, \lambda_l)f^{[n-l]}(\lambda_l, \ldots, \lambda_{n}).
\end{align*}
Hence, as before,
\begin{align*}
\psi^k_{f,n}(t)
& = T_{(fh_k)^{[n]}}^{(a+tb)^{n+1}}((b)^{n-1},bh_{k}(a+tb)) - T_{h_k^{[n]}}^{(a+tb)^{n+1}}((b)^{n-1},b(fh_k)(a+tb)) \\
& \hspace{0,5cm} - \sum_{l=1}^{n-1} T_{h_k^{[l]}}^{(a+tb)^{l+1}}((b)^{l})\cdot T_{f^{[n-l]}}^{(a+tb)^{n+1-l}}((b)^{n-l-1}, bh_{k}(a+tb)) \\
& = T_{(fh_k)^{[n]}}^{(a+tb)^{n+1}}((b)^{n-1}, bh_{k}(a+tb)) - T_{h_k^{[n]}}^{(a+tb)^{n+1}}((b)^{n-1},b(fh_k)(a+tb)) \\
& \hspace{0,5cm}-\sum_{l=1}^{n-1} T_{h_k^{[l]}}^{(a+tb)^{l+1}}((b)^{l}) \left[\psi_{f,n-l}(t)+T_{f^{[n-l]}}^{(a+tb)^{n+1-l}}((b)^{n-l-1},bh_{k}(a+tb)-b)\right].
\end{align*}
Note that all these operators are well defined using the fact that $b \in L_{r}(\mathcal{M})$ for every $np\leq r \leq \infty$. Now, with the same arguments as for the case $n=1$, $\psi^k_{f,n}$ is continuous at $0$ as a sum and composition of continuous functions. This concludes the proof.
\end{proof}
		
The differentiability result we obtain is the following.

\begin{thm}\label{DiffLp}
Let $1<p<\infty$, $n\in\N$, and $f\in C^{n}_b(\R)$. Let $a,b\in\widetilde{\cM}$ be self-adjoint operators with $b\in L_{p}(\mathcal{M}) \cap L_{np}(\mathcal{M})$. Consider the function
\[\phi : t\in\R\mapsto f(a+tb)-f(a) \in L_p(\mathcal{M}).\]
Then the function $\phi$ is $n$-times differentiable on $\mathbb{R}$ and for every integer $1\le k\le n$,
\[\frac{1}{k!}\phi^{(k)}(t)=T_{f^{[k]}}^{a+tb,\ldots,a+tb}(b,\ldots,b).\]
Moreover, $\phi^{(n)}$ is continuous on $\R$ if any of the following holds: $f^{(n)} \in C_0(\R)$; $f^{(n)}$ is uniformly continuous on $\R$ and $b\in L_{p}(\cM)\cap L_{(n+1)p}(\cM)$; or $b\in L_{p}(\cM)\cap\cM$.
\end{thm}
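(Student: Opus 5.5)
We argue by induction on $k$, combining the perturbation formulas (Theorem~\ref{Perturbation-Theorem}, Theorem~\ref{Prtb-Thm}, Corollary~\ref{Perturbationformulageneral}) with the continuity results of Proposition~\ref{continuityat0}. Fix $t\in\R$. Replacing $a$ by $a+tb$ reduces everything to differentiability at $t=0$.

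\emph{First derivative.} By Theorem~\ref{Perturbation-Theorem} applied to $a+sb$ and $a$ (note $(a+sb)-a=sb\in L_p(\cM)$ and $f\in\mathrm{Lip}(\R)$),
\[
\frac{\phi(s)-\phi(0)}{s}=T_{f^{[1]}}^{a+sb,a}(b).
\]
So it suffices to show that $s\mapsto T_{f^{[1]}}^{a+sb,a}(b)$ is continuous at $0$ in $L_p$. This is the case $n=1$, $j=1$ of $\psi_f$ in \eqref{MOI-map}. However, Proposition~\ref{continuityat0} requires extra hypotheses, such as $f'\in C_0$ or $b$ bounded, which we do not have in general. The plan is therefore to prove continuity at $0$ of
\[
\psi(s)=T_{f^{[k]}}^{(a+sb)^j,(a)^{k+1-j}}((b)^k)
\]
for $f^{(k)}$ merely bounded and continuous. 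The idea is to exploit that we only need continuity and not a rate. Choose $h_m\in C_c^\infty$ with $0\le h_m\le 1$ and $h_m\to1$ pointwise, and split $f^{(k)}$ into the pieces $f^{(k)}h_m$ and $f^{(k)}(1-h_m)$. Concretely, pick $g_m\in C^k$ with $g_m^{(k)}=f^{(k)}h_m\in C_c\subset C_0$. Case (i) of Proposition~\ref{continuityat0} gives continuity of $\psi_{g_m}$. The remainder $\psi_{f-g_m}$ is not small in norm, since $\|f^{(k)}(1-h_m)\|_\infty$ does not tend to $0$, so the difference must be handled differently. I would use the cutoff trick from case (iii): insert spectral cutoffs $h_m(a+sb)$ and $h_m(a)$ at the outer variables via \cite[Lemma 3.2]{PoSkSu13}, as in the definition of $\psi^k_f$. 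The point is that $h_m(a)b\to b$ in $L_{kp}$ and, by Lemma~\ref{resolventstrongCV}-type strong convergence for unbounded $b$ (using resolvent convergence $(a+sb-i)^{-1}\to(a-i)^{-1}$ in norm, since $b\in\widetilde{\cM}$), $h_m(a+sb)b\to h_m(a)b$ in $L_{kp}$ as $s\to0$. I expect this step to be the main obstacle. The Leibniz-type decomposition \cite[Eq.~(3.21)]{Co22} is what lets us treat compactly supported symbols, but its lower-order terms contain $b$ multiplied on both sides. This is exactly where the integrability $b\in L_p\cap L_{kp}$ (and interpolation to all $L_r$, $p\le r\le kp$) must replace boundedness of $b$.

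\emph{Higher derivatives.} Suppose that for $k<n$ we have $\frac1{k!}\phi^{(k)}(s)=T_{f^{[k]}}^{(a+sb)^{k+1}}((b)^k)$. Corollary~\ref{Perturbationformulageneral}, applied with $n$ replaced by $k+1$ and $b\in L_{kp}\cap L_{(k+1)p}$ (both spaces contain $b$ by interpolation between $L_p$ and $L_{np}$), gives
\[
\frac{1}{k!}\cdot\frac{\phi^{(k)}(s)-\phi^{(k)}(0)}{s}=\sum_{l=1}^{k+1}T_{f^{[k+1]}}^{(a+sb)^l,(a)^{k+2-l}}((b)^{k+1}).
\]
Here we need $f^{(k)},f^{(k+1)}$ bounded, which holds since $f\in C^n_b$. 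Letting $s\to0$ and using the continuity of each summand established in the previous step (with $k+1$ in place of $k$) yields the limit $(k+1)\,T_{f^{[k+1]}}^{(a)^{k+2}}((b)^{k+1})$. This gives the formula for $\phi^{(k+1)}$ with the factor $\frac1{(k+1)!}$.

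\emph{Continuity of $\phi^{(n)}$.} By the formula, $\frac1{n!}\phi^{(n)}(t)=\psi_f(t)$ with $j=n+1$. Under each of the three stated hypotheses, Proposition~\ref{continuityat0} (i), (ii), (iii) applied to the base point $a+t_0b$ gives continuity at every $t_0$.
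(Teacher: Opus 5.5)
Your outer scaffolding matches the paper: reduction to $t=0$, the difference quotients from Theorem~\ref{Perturbation-Theorem} and Corollary~\ref{Perturbationformulageneral}, and Proposition~\ref{continuityat0} for the continuity of $\phi^{(n)}$. But the whole argument rests on a step you leave open, namely continuity at $0$ of $s\mapsto T_{f^{[k]}}^{(a+sb)^j,(a)^{k+1-j}}((b)^k)$ when $b$ is unbounded and $f^{(k)}$ is merely bounded and continuous. This is not a side lemma. For $k=j=1$ it is literally equivalent to differentiability of $\phi$ at $0$ with the right derivative. As written, your argument is complete only for $b\in L_p(\cM)\cap\cM$, where Proposition~\ref{continuityat0}(iii) applies; that is exactly the paper's first stage. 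The justification you sketch for the unbounded case is also false: $(a+sb-i)^{-1}\to(a-i)^{-1}$ in operator norm fails for unbounded $b$. With $a=0$ one gets $\|(sb-i)^{-1}-(-i)^{-1}\|=\sup_{\lambda\in\sigma(b)}|s\lambda|/|s\lambda-i|=1$ for every $s\neq0$. The splitting $f^{(k)}=f^{(k)}h_m+f^{(k)}(1-h_m)$ is, as you notice, a dead end, and the Leibniz-type reduction is only named, never carried out.

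The missing idea is that you never need this continuity for unbounded $b$. The paper proves each inductive step first for bounded $b$, then approximates a general $b$ by bounded $\tilde b$ with $\|b-\tilde b\|$ small in $L_p\cap L_{np}$. The key observation is that the difference quotients for $b$ and for $\tilde b$ differ by $O(\|b-\tilde b\|)$ \emph{uniformly in $s$}. For the first derivative, $\frac{\phi(s)-\tilde\phi(s)}{s}=T_{f^{[1]}}^{a+sb,a+s\tilde b}(b-\tilde b)$. For higher orders, one telescopes both in the arguments ($(b)^k$ versus $(\tilde b)^k$) and in the operators ($a+sb$ versus $a+s\tilde b$). Theorem~\ref{Prtb-Thm} rewrites each piece as $s$ times an MOI of $f^{[k+1]}$ containing a factor $b-\tilde b$, which Theorem~\ref{bddMOI} bounds. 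An $\varepsilon/3$ argument then transfers the limit from $\tilde b$ to $b$.

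If you prefer to keep your route, it appears repairable. The resolvent identity in $\widetilde{\cM}$ gives $\|(a+sb-i)^{-1}-(a-i)^{-1}\|_r\le|s|\,\|b\|_r$. A uniformly bounded family converging in $L_r$ converges strongly, which yields the strong convergence $h(a+sb)\to h(a)$ the cutoff argument needs. You would still have to carry out the inductive cutoff/Leibniz argument of Proposition~\ref{continuityat0}(iii) for $L_{kp}$-perturbations and for all mixed $j$.
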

		
\begin{proof}
We will prove the result by induction on $n$ and in both steps, we will make the proof with the additional assumption that $b \in L_{p}(\mathcal{M}) \cap \mathcal{M}$ and then deduce the result for $L_{p}(\mathcal{M}) \cap L_{np}(\mathcal{M})$. Recall that this implies that for every $p \leq r \leq \infty$, $b\in L_r(\mathcal{M})$. Also, by translation, it is sufficient to prove the differentiability at $t=0$.
			
Assume that $n=1$ and $b \in L_{p}(\mathcal{M}) \cap \mathcal{M}$. According to Theorem \ref{Perturbation-Theorem}, we have, for every $t\neq 0$,
\begin{align*}
\frac{\phi(t)-\phi(0)}{t} =  \frac{1}{t} (f(a+tb)-f(a))
& = T^{a+tb,a}_{f^{[1]}}(b),
\end{align*}
which, according to Proposition \ref{continuityat0}, converges to $T^{a,a}_{f^{[1]}}(b)$ as $t\to 0$.
			
Assume now that $b\in L_{p}(\mathcal{M})$. Let $\epsilon > 0$. By density, there exists $\tilde{b}\in L_{p}(\mathcal{M}) \cap \mathcal{M}$ such that $\|b-\tilde{b}\|_p < \epsilon$. Define
\[ \tilde{\phi} : t\in\R\mapsto f(a+t\tilde{b})-f(a).\]
From the above computations, the function $\tilde{\phi}$ is differentiable at $0$ and there exists $\eta > 0$ such that, for every $0<|t| < \eta$,
\begin{equation*}
\left\| \frac{\tilde{\phi}(t)-\tilde{\phi}(0)}{t}-T^{a,a}_{f^{[1]}}(\tilde{b}) \right\|_p < \epsilon.
\end{equation*}
Hence, for all $0<|t| < \eta$, we have, according to Theorem \ref{Perturbation-Theorem}, Theorem \ref{bddMOI} and by triangle inequality,
\begin{align*}
& \left\| \frac{\phi(t) - \phi(0)}{t} - T^{a,a}_{f^{[1]}}(b) \right\|_p \\
& \hspace*{0,5cm} \leq \left\| \frac{\phi(t) - \tilde{\phi}(t)}{t} \right\|_p + \left\| T^{a,a}_{f^{[1]}}(\tilde{b}) - T^{a,a}_{f^{[1]}}(b) \right\|_p + \left\| \frac{\tilde{\phi}(t) - \tilde{\phi}(0)}{t} - T^{a,a}_{f^{[1]}}(\tilde{b}) \right\|_p \\
& \hspace*{0,5cm} = \left\| T^{a+tb,a+t\tilde{b}}_{f^{[1]}}(b-\tilde{b}) \right\|_p + \left\| T^{a,a}_{f^{[1]}}(\tilde{b}-b) \right\|_p + \left\| \frac{\tilde{\phi}(t) - \tilde{\phi}(0)}{t} - T^{a,a}_{f^{[1]}}(\tilde{b}) \right\|_p \\
& \hspace*{0,5cm} \leq 2C\|f'\|_{\infty} \|b-\tilde{b} \|_p + \epsilon = \text{const} \cdot \epsilon,
\end{align*}
which yields the differentiability of $\phi$ at $0$ with the desired derivative.
			
Assume now that $\phi$ is $(n-1)$-times differentiable and that for every $1\leq k \leq n-1$ and every $t\in \mathbb{R}$,
\[\frac{1}{k!}\phi^{(k)}(t)=T_{f^{[k]}}^{a+tb,\ldots,a+tb}(b,\ldots,b).\]
Assume that $b \in L_{p}(\mathcal{M}) \cap \mathcal{M}$. According to Corollary \ref{Perturbationformulageneral}, for every $t\in \mathbb{R}$,
$$
\dfrac{\phi^{(n-1)}(t) - \phi^{(n-1)}(0)}{t(n-1)!} = \sum_{l=1}^{n} T_{f^{[n]}}^{(a+tb)^{l}, (a)^{n+1-l}}((b)^{n}),
$$
which converges to
$$
n T_{f^{[n]}}^{(a)^{n+1}}((b)^{n})
$$
as $t\to 0$, according to Proposition \ref{continuityat0}. This proves the result for such $b$. To deduce the general case, let $b\in L_{p}(\mathcal{M}) \cap L_{np}(\mathcal{M})$ and approximate it by an element $\tilde{b} \in L_{p}(\mathcal{M}) \cap \mathcal{M}$ such that $\|b-\tilde{b}\|_{np}$ is small and then follow the same proof as in \cite[Lemma 3.7]{Co22}. Continuity of $\phi^{(n)}$ follows from Proposition \ref{continuityat0}.
\end{proof}

\begin{rmrk}
Note that the $n=1$ case of the preceding theorem first appeared in \cite[Corollary 6.10]{PaSu04}, where it was proved for $f\in C^{1}(\R)$ such that $f'$ has a bounded variation. Theorem \ref{DiffLp} extends the function class to $f\in C^{1}(\R)$ with bounded $f'$. 
\end{rmrk}

\begin{rmrk}
Theorem \ref{DiffLp} remains valid if we assume only that $a$ is affiliated with $\mathcal{M}$ (instead of being $\tau$-measurable) and that $b\in L_{p}(\mathcal{M})\cap\mathcal{M}$.
\end{rmrk}
		
The assumption $b\in L_p(\mathcal{M}) \cap L_{np}(\mathcal{M})$ in Theorem \ref{DiffLp} cannot be weakened to $b\in L_p(\mathcal{M})$ and justified in the example below.

\begin{xmpl}  
Let $f : \mathbb{R} \to \mathbb{R}$ be a $C^2$-function such that $f',f''$ are bounded and such that $f(x) = \frac{1}{1+x}$ for $x\geq 0$. Consider the (commutative) $L_p$-space $L_p(0,1)$ for some $1<p<\infty$. Let $a$ be the constant function $1$ and let $b$ be a positive element of $L_p(0,1) \setminus L_{2p}(0,1)$. Then the function $f^{[1]}$ is given, for $x,y\geq 0$, by $f^{[1]}(x,y) = - \frac{1}{(1+x)(1+y)}$. The function $\phi : \mathbb{R} \ni t \mapsto f(a+tb)-f(a) \in L_p(0,1)$ is differentiable with derivative given by
$$\forall t\geq 0, \quad \phi'(t) = T_{f^{[1]}}^{a+tb, a+tb}(b) = - \frac{b}{(2+tb)^2} \in L_p(0,1).
$$
If $\phi'$ was differentiable at $t=0$, then
\begin{align*}
\frac{\phi'(t)-\phi'(0)}{t} = \frac{b^2}{(2+tb)^2} + t \frac{b^3}{4(2+tb)^2}
\end{align*}
would have a limit $g \in L_p(0,1)$ as $t\to 0^+$. In that case, we could find a sequence $(t_n)_n$ converging to $0$ and such that
$$
\frac{b^2}{(2+t_nb)^2} + t_n \frac{b^3}{4(2+t_nb)^2} \underset{n\to +\infty}{\longrightarrow} g
$$
almost everywhere. But the latter sequence converges almost everywhere to $\frac{b^2}{4}$ which implies that $g=\frac{b^2}{4}$. This is a contradiction since $b^2 \notin L_{p}(0,1)$, hence $\phi'$ is not differentiable at $t=0$.
\end{xmpl}

\section{Trace formula: noncommutative $L_p$-perturbation}\label{sec:traceLp}
In this section, we establish the $n$-th order ($n \ge 2$) trace formula \eqref{PSS-formula-trace} for perturbations $b$ in $L_n(\cM) \cap L_{n^{2}}(\cM)$ and for functions $f$ from a significantly larger class introduced below.

\smallskip

We now introduce a new class of scalar functions on $\R$ for which we establish the trace formula.

\noindent {\bf The class $\mathfrak{W}_n(\R)$:} Let $n\in \N$. Define
\begin{align}\label{Funct_Class-1}
\mathfrak{W}_n(\R)=\{f\in C^n_0(\R): f^{[n]}\in\mathfrak{C}_n \text{ and } f^{(k)}\in C_b(\R), 1\leq k\leq n-1\}.
\end{align}
The following lemma ensures that the newly introduced function class $\mathfrak{W}_{n}(\R)$ is sufficiently large.

\begin{lma}\label{lem_inclussion0}
Let $n\in\N$. Then
\smallskip
\begin{enumerate}[\normalfont(i)]
\item $C_c^{n+1}(\R)\subseteq\mathcal{W}_n(\R)\subseteq\mathfrak{W}_n(\R).$
\item $B_{\infty 1}^n(\R)\cap C^n_0(\R) \cap C^{n-1}_b(\R)\subseteq \mathfrak{W}_n(\R).$
\end{enumerate}
\end{lma}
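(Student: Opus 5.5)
Part (i) is immediate from the definitions. The first inclusion $C_c^{n+1}(\R)\subseteq\cW_n(\R)$ is already recorded in Lemma~\ref{setinclussion}, so only $\cW_n(\R)\subseteq\fW_n(\R)$ needs proof. Let $f\in\cW_n(\R)$ and check the three requirements in \eqref{Funct_Class-1} in turn.

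\emph{Decay of $f^{(n)}$.} Since $f\in W_n(\R)$, the Fourier transform of $f^{(n)}$ is integrable. By Fourier inversion, $f^{(n)}$ is (a.e., hence everywhere by continuity) the inverse Fourier transform of an $L^1$ function. The Riemann--Lebesgue lemma then gives $f^{(n)}\in C_0(\R)$, that is, $f\in C_0^n(\R)$.

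\emph{Boundedness of the lower derivatives.} For $1\le k\le n-1$ we have $f\in W_k(\R)$. The same inversion argument gives $f^{(k)}\in C_0(\R)\subseteq C_b(\R)$.

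\emph{The divided difference.} Lemma~\ref{setinclussion} gives $\cW_n(\R)\subseteq B^n_{\infty1}(\R)$, and also that $f\in B^n_{\infty1}(\R)$ implies $f^{[n]}\in\fC_n$.

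These three facts together give $f\in\fW_n(\R)$.

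For (ii), let $f\in B^n_{\infty1}(\R)\cap C_0^n(\R)\cap C_b^{n-1}(\R)$. Lemma~\ref{setinclussion} yields $f^{[n]}\in\fC_n$. The hypothesis $f\in C_0^n(\R)$ gives $f^{(n)}\in C_0(\R)$. The hypothesis $f\in C_b^{n-1}(\R)$ gives $f^{(k)}\in C_b(\R)$ for $1\le k\le n-1$. This is exactly the definition of $\fW_n(\R)$.

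The only step that needs any care is the Fourier-analytic one. One must interpret ``the Fourier transform of $f^{(k)}$ is integrable'' for a possibly non-integrable continuous $f^{(k)}$, i.e.\ in the distributional sense. One then identifies $f^{(k)}$ with the continuous inverse transform of $\widehat{f^{(k)}}\in L^1$; the two agree as distributions, and since both are continuous, they agree everywhere. This is standard, and I expect no real obstacle beyond stating it carefully.
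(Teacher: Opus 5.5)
Your proof is correct and takes the same route as the paper, which derives both inclusions from Lemma~\ref{setinclussion} together with the definitions. You also spell out the Riemann--Lebesgue step giving $f^{(k)}\in C_0(\R)$ for $1\le k\le n$ when $f\in\cW_n(\R)$; the paper leaves this implicit, and it is genuinely needed, because membership in $B^n_{\infty1}(\R)$ alone does not provide it.
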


\begin{proof}
The set inclusions follow from Lemma \ref{setinclussion}.
\end{proof}

\begin{thm}\label{main_thm_Lp}
Let $n\in\N$ and $n\geq 2$. Let $a, b\in \widetilde{\cM}$ be self-adjoint operators such that $b\in L_{n}(\cM)\cap L_{n^2}(\cM)$. Then there exists a unique function $\eta_{n}:=\eta_{n,a,b}\in L^1(\R)$ such that 
\begin{align}\label{formula_1}
\tau\left(\cR_n(f,a,b)\right)= \int_\R f^{(n)}(t)\eta_n(t) dt
\end{align}
for every $f\in\fW_n(\R)$. Moreover, $\eta_n$ satisfies
\begin{align*}
\| \eta_n \|_1 \leq c_n \|b\|_n^n.
\end{align*}
\end{thm}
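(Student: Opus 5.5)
The plan is to express the Taylor remainder through a single multiple operator integral, bound its trace by $\|f^{(n)}\|_\infty\|b\|_n^n$, and then produce $\eta_n$ by a duality argument.

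\emph{Step 1: representation of the remainder.} Apply Theorem~\ref{DiffLp} with $p=n$; this is allowed since $b\in L_n(\cM)\cap L_{n^2}(\cM)$ and $f\in\fW_n(\R)\subseteq C^n_b(\R)$. It shows that every derivative $\frac{d^k}{dt^k}f(a+tb)|_{t=0}=k!\,T^{(a)^{k+1}}_{f^{[k]}}((b)^k)$ is well defined, and for $k\ge1$ it lies in $L_n(\cM)$. Rather than invoke Taylor's integral formula, I will telescope directly. By Theorem~\ref{Perturbation-Theorem},
\[
f(a+b)-f(a)=T^{a+b,a}_{f^{[1]}}(b).
\]
Iterating Theorem~\ref{Prtb-Thm}, with the last spectral argument changed from $a$ to $a+b$ at each level, gives
\[
\cR_n(f,a,b)=T^{a+b,(a)^{n}}_{f^{[n]}}((b)^n).
\]
At level $k$ this uses $b\in L_{p_j}$ with $\sum 1/p_j<1$. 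I will take $p_j=n^2/(n-1)$ or similar, interpolating between $L_n$ and $L_{n^2}$. It also needs $f^{(k)},f^{(k+1)}$ bounded, which holds for $k\le n-1$ by the definition of $\fW_n(\R)$.

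The final MOI has $n$ entries in $L_n$, so its trace is controlled by Corollary~\ref{cr:trace-bdd} with $p_i=n$. It will have to be checked that the sum of exponents equal to $1$ is acceptable: Theorem~\ref{Prtb-Thm} needs the strict inequality, so the last step should be done with $b$ in the intermediate spaces, and the resulting identity then holds in $L_1$ via Lemma~\ref{nice_moi_bdd}, because $f^{[n]}\in\fC_n$ allows exponents summing to $1$. This shows $\cR_n(f,a,b)\in L_1(\cM)$ and, by cyclicity of the trace,
\[
|\tau(\cR_n(f,a,b))|\le c_n\|f^{(n)}\|_\infty\|b\|_n^n .
\]

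\emph{Step 2: the functional and $\eta_n$.} The estimate shows that $f\mapsto\tau(\cR_n(f,a,b))$ depends only on $f^{(n)}$: polynomials of degree less than $n$ in $\fW_n(\R)$ have vanishing remainder, as in \cite{PoSkSu13}. It also defines a functional $\Lambda(f^{(n)})$ bounded in the sup norm on the subspace $\{f^{(n)}:f\in\fW_n(\R)\}\supseteq C_c^{1}(\R)$, which is dense in $C_0(\R)$. Extending by Hahn--Banach (or by continuity) to $C_0(\R)$, the Riesz theorem gives a complex measure $\mu$ with $\|\mu\|\le c_n\|b\|_n^n$ and $\tau(\cR_n)=\int f^{(n)}\,d\mu$.

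\emph{Step 3: absolute continuity, uniqueness, and the main obstacle.} It remains to show that $\mu$ is absolutely continuous. For bounded $b$ this follows from Theorem~\ref{inv_thm}. For general $b$, approximate by $b_m\in L_n\cap\cM$ with $b_m\to b$ in $L_n\cap L_{n^2}$. The bound in Step 1, applied to differences of the MOIs (multilinear expansion), gives $\tau(\cR_n(f,a,b_m))\to\tau(\cR_n(f,a,b))$ uniformly in $\|f^{(n)}\|_\infty\le1$. Hence the measures $\eta_{n,a,b_m}\,dt$ converge in total variation, so the limit $\mu$ lies in the closed subspace $L^1(\R)$. Uniqueness follows since $\{f^{(n)}\}$ contains $C_c^\infty$, which determines an $L^1$ function.

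The main obstacle I expect is Step 1 for unbounded $b$: establishing the representation of $\cR_n$ with the correct exponents so that every application of Theorem~\ref{Prtb-Thm} is legitimate. The class $\fC_n$ assumption is what allows passing to the endpoint $\sum 1/p_i=1$.
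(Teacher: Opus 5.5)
Your architecture (MOI representation of the remainder, trace bound, Hahn--Banach/Riesz, approximation by bounded perturbations) matches the paper's. However, the trace estimate in Step~1 does not go through as stated.

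Corollary~\ref{cr:trace-bdd} only concerns $T^{a_1,\ldots,a_n,a_1}_{f^{[n]}}$, whose first and last spectral arguments are equal. That equality is what lets $\alpha_n(a_1,\omega)\alpha_0(a_1,\omega)$ merge, after cycling, into a function of $a_1$, reducing everything to the symbol $f^{[n]}(\lambda_0,\ldots,\lambda_{n-1},\lambda_0)$. In your representation $T^{a+b,(a)^n}_{f^{[n]}}((b)^n)$ the outer arguments are $a+b$ and $a$. Cyclicity then only produces $\alpha_n(a,\omega)\alpha_0(a+b,\omega)$, so the corollary is not available.

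The paper uses that representation only to show $\cR_n(f,a,b)\in L_1(\cM)$. For the bound it proceeds as follows:
\begin{itemize}
\item it writes $\cR_n(f,a,b)=\int_0^1 n(1-t)^{n-1}T^{(a+tb)^{n+1}}_{f^{[n]}}((b)^n)\,dt$ as an $L_n(\cM)$-valued Bochner integral, with continuity from Proposition~\ref{continuityat0}(i);
\item it shows that $\tau$ commutes with this integral, using $\tau$-finite projections $e_k\uparrow 1$;
\item it applies the corollary to each integrand, whose spectral arguments all coincide.
\end{itemize}
Your telescoping can also be repaired directly. Start from $f(a+b)-f(a)=T^{a,a+b}_{f^{[1]}}(b)$ and apply Theorem~\ref{Prtb-Thm} with $j=2$ at every stage. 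This gives $\cR_n(f,a,b)=T^{a,a+b,(a)^{n-1}}_{f^{[n]}}((b)^n)$, and the corollary now applies with $a_1=a$, $a_2=a+b$, $a_3=\cdots=a_n=a$.

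Step~3 has a second gap. In $\cR_n(f,a,b_m)$ the perturbation also enters through the spectral argument $a+b_m$, and a multilinear expansion only handles the arguments. If you handle the change $a+b_m\to a+b$ with Theorem~\ref{Prtb-Thm}, you bring in $f^{[n+1]}$ and a bound by $\|f^{(n+1)}\|_\infty$. So you do not get convergence uniformly over $\|f^{(n)}\|_\infty\le 1$, which is exactly what total-variation convergence requires. Pointwise convergence in $f$ would only give a weak$^*$ limit, which need not be absolutely continuous.

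What you need is a bound
$|\tau(\cR_n(f,a,b))-\tau(\cR_n(f,a,b'))|\le c_n\|f^{(n)}\|_\infty\|b-b'\|_n(\|b\|_n+\|b'\|_n)^{n-1}$.
One way is to integrate along $b_s=b'+s(b-b')$. Formally $\frac{d}{ds}\tau(\cR_n(f,a,b_s))=\tau\big(\cR_{n-1}(f',a,b_s)(b-b')\big)$ and $\|\cR_{n-1}(f',a,b_s)\|_{n/(n-1)}\le c\|f^{(n)}\|_\infty\|b_s\|_n^{n-1}$. Making this rigorous for unbounded $b$ takes the same kind of care as in Step~1.

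The paper defers this part to the argument of \cite{PoSkSu13}, approximating through $L_1(\cM)\cap\cM$. So your plan of approximating and invoking Theorem~\ref{inv_thm} is reasonable, but the justification you give for the uniform convergence does not work.

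A minor point: for $n\ge3$, $\{f^{(n)}:f\in\fW_n(\R)\}$ does not contain $C^1_c(\R)$, because bounded $f',\ldots,f^{(n-1)}$ force vanishing moments. The set is still dense in $C_0(\R)$, which is all you need.
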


\begin{proof}
Let $b\in L_n(\cM)\cap L_{n^2}(\cM)$, and let $f\in \fW_n(\R)$. By Proposition~\ref{continuityat0}, the map
\[
[0,1]\ni t \longmapsto g(t) := n(1-t)^{n-1}\, T_{f^{[n]}}^{(a+tb)^{n+1}}\big((b)^n\big) \in L_{n}(\cM)
\]
is continuous. Therefore, by \cite[Theorem 1.1.20 and Proposition 1.2.2]{Neerven}, $g$ is Bochner integrable as an $L_n(\cM)$-valued function.

Applying Theorems \ref{DiffLp}, \ref{Perturbation-Theorem}, \ref{Prtb-Thm}, and Lemma~\ref{nice_moi_bdd}, we obtain
\begin{align}\label{eq:remainder-traceclass}
\cR_n(f,a,b)=\int_0^1 g(t)\,dt=T_{f^{[n]}}^{a+b,a,\ldots,a}\big((b)^n\big) \in L_1(\cM),
\end{align}
where the integration in the first equality is the $L_n(\cM)$-valued Bochner integral.

Next, we want to show that
\begin{align}\label{eq:trace-inside}
\tau\!\Big(\int_0^1 g(t)\,dt\Big) = \int_0^1 \, \tau\!\big(g(t)\big)\,dt.
\end{align}

To this end, let $e\in \cM$ be a $\tau$-finite projection and set $\tilde g(t) := g(t)e$. By continuity of $g$ and H\"older's inequality,
\[\tilde g : [0,1] \to L^1(\cM)\]
is continuous. By \cite[Theorem 1.1.20 and Proposition 1.2.2]{Neerven}, $\tilde g$ is Bochner integrable as an $L_1(\cM)$-valued function.
	
Hence, by continuity of $\tau$, 
	
\[\tau\left(\underbrace{\left( \int_{0}^{1} g(t)\,dt \right)}_{\text{as an element of} \ L_n(\cM)}e\right) = \tau \left( \int_0^1 g(t)e\,dt \right) = \int_0^1 \tau(g(t)e)\,dt.\]

Let $(e_n)_n$ be a sequence of $\tau$-finite projections with $e_n \uparrow 1$, which exists by \cite[Proposition~5.6.25]{Sukochev-book}. Note that by \eqref{eq:remainder-traceclass}, $\int_{0}^{1}g(t)\, dt\in L_1(\cM)$. Then, according to \cite[Corollary 5.5.14 and Theorem 6.2.1]{Sukochev-book}, 
\[
\Big(\int_0^1 g(t)\,dt\Big) e_n \longrightarrow \int_0^1 g(t)\,dt \quad \text{in } L_1(\cM),
\]
and thus
\[
\tau\Big(\big(\int_0^1 g(t)\,dt\big)e_n\Big) \longrightarrow \tau\Big(\int_0^1 g(t)\,dt\Big).
\]
On the other hand, Lebesgue’s dominated convergence theorem gives
\[
\int_0^1 \tau\big(g(t)e_n\big)\,dt \longrightarrow \int_0^1 \tau\big(g(t)\big)\,dt.
\]
This proves \eqref{eq:trace-inside}.

Thus, by Corollary \ref{cr:trace-bdd}, we obtain
\begin{align}\label{bound_2}
\big|\tau(\cR_n(f,a,b))\big| = \Big|\tau\Big(\int_0^1 g(t)\,dt\Big)\Big| \le \int_0^1 \big|\tau(g(t))\big|\,dt \le c_n \,\|f^{(n)}\|_\infty \,\|b\|_n^n.
\end{align}

Consider now the linear functional
\[
\{f^{(n)}: f\in \fW_n(\R)\} \ni f^{(n)} \longmapsto  \tau\!\Big(\cR_n(f, a, b)\Big),
\]
which is bounded by \eqref{bound_2}. By the Riesz representation theorem for functionals in $(C_0(\R))^*$, the Hahn-Banach theorem, and \eqref{bound_2}, there exists a finite complex measure $\mu_n$ such that
\begin{align}\label{measure exist}
\tau(\cR_n(f,a,b)) = \int_{\R} f^{(n)}(t)\,d\mu_n(t)
\end{align}
for every $f\in \fW_n(\R)$. In particular, \eqref{measure exist} holds for all $f\in C_c^{n+1}(\R)$.

The remainder of the proof, namely the verification that $\mu_n$ is absolutely continuous with respect to the Lebesgue measure, follows verbatim the argument in the proof of \cite[Theorem~1.1]{PoSkSu13}. It relies on the estimates established in Theorem~\ref{bddMOI} and Corollary~\ref{cr:trace-bdd}, together with density of $L_1(\cM)\cap \cM$ in $L_n(\cM)\cap L_{n^2}(\cM)$, the perturbation formulas from Theorems \ref{Perturbation-Theorem} and~\ref{Prtb-Thm}, and the derivative formula of Theorem~\ref{DiffLp}. The proof is complete.
\end{proof}

\begin{rmrk}
Using the technique developed in the current section, one can establish \eqref{formula_1} for $n=1$, with $b\in L_1(\cM)\cap L_2(\cM)$ and $f\in \fW_1(\R)$. This, however, does not lead to any substantial improvement over the formula already obtained in \cite{PoSu14jst}.
\end{rmrk}

\begin{rmrk}
Theorem \ref{main_thm_Lp} remains valid if we assume only that $a$ is affiliated with $\mathcal{M}$ (instead of being $\tau$-measurable) and that $b\in L_{n}(\cM)\cap\mathcal{M}$.
\end{rmrk}

\section{Trace formula: Schatten $p$-class perturbation}\label{sec:traceSp}

In this section, we assume that $\mathcal{M}=\bh$ and that $b=b^*\in\mathcal{S}^n(\hilh)$. The aim of this section is to establish a trace formula for the Taylor remainder $\cR_n(f,a,b)$ for a class of functions containing $\fW_n(\R)$, thereby extending the corresponding result of \cite{PoSkSu13}, and to derive a trace formula for an object analogous to $\cR_n(f,a,b)$ that is valid for a broader class of functions, including the Besov space $B_{\infty 1}^n(\R)$, thus extending the result of \cite{AlPe11}.

These extensions are made possible by the explicit nature of the canonical trace $\Tr$, which allows us to employ the theory of a substantially stronger notion of multiple operator integrals developed in \cite{CoLeSu}. This framework applies to a larger class of symbols and yields estimates of the same type as those available for the previously introduced notions.

\medskip

\noindent{\bf Multiple operator integral: Coine-Le Merdy-Sukochev's approach.} 
The following notion of multiple operator integration was developed in \cite{CoLeSu}.

\medskip

Let $n\in\N$ with $n\geq 2$, and let $E_{1},\ldots,E_{n},E$ be Banach spaces. We denote by $\mathcal{B}_{n}(E_{1}\times\cdots\times E_{n},E)$ the space of $n$-linear mappings $T:E_{1}\times\cdots\times E_{n}\to E$ such that
\[\|T\|_{\mathcal{B}_{n}(E_{1}\times\cdots\times E_{n},E)}:=\sup_{\|x_{i}\|\le 1,\, 1\le i\le n}\|T(x_{1},\ldots,x_{n})\|<\infty.\]
We simply write $\mathcal{B}_{n}(E)$ in the case where $E_{1}=\cdots=E_{n}=E$.

Consider self-adjoint operators $a_1, \dots, a_n$ in $\hilh$, each associated with a scalar-valued spectral measure $\lambda_{a_1}, \dots, \lambda_{a_n}$ (see \cite{conway}). Define the multilinear transformation

\begin{align*}
\Gamma^{a_1,\dots,a_n}:L^\infty(\lambda_{a_1}) \otimes \cdots \otimes L^\infty(\lambda_{a_n}) \to \mathcal{B}_{n-1}(\Sp^2(\hilh)),
\end{align*}
as the unique linear map such that for any $f_i \in L^\infty(\lambda_{a_i}),~i=1,\ldots,n$, and for any $b_1,\dots,b_{n-1}\in\mathcal{S}^2(\hilh)$, we have
\begin{align*}
\left[\Gamma^{a_1,\dots,a_n}(f_{1}\otimes\cdots\otimes f_{n})\right](b_1, \dots, b_{n-1})
= f_1(a_1)b_1 f_2(a_2) \cdots f_{n-1}(a_{n-1}) b_{n-1} f_n(a_n).
\end{align*}

Note that $\mathcal{B}_{n-1}(\mathcal{S}^2(\hilh))$ is a dual space. According to \cite[Proposition 3.4]{CoLeSu}, $\Gamma^{a_1,\dots,a_n}$ extends to a unique $w^*$-continuous isometry still denoted by
\begin{align*}
\Gamma^{a_1,\dots,a_n}: L^\infty\left(\prod_{i=1}^n \lambda_{a_{i}}\right) \to \mathcal{B}_{n-1}(\mathcal{S}^2(\hilh)).
\end{align*}
See \cite[Section 3]{CoLeSu} for more information in the case $n=3$.

\begin{dfn}\label{moi_coine}
For $\varphi \in L^\infty\left( \prod_{i=1}^n \lambda_{a_{i}} \right)$, the operator $\Gamma^{a_1, \ldots, a_n}(\varphi)$ is called a multiple operator integral associated to $a_1,\ldots,a_n$ and $\varphi$.
\end{dfn}

The $w^*$-continuity of $\Gamma^{a_1,\dots,a_n}$ means that if a net $(\varphi_i)_{i\in I}$ in $L^\infty\left(\prod_{i=1}^n \lambda_{a_{i}}\right)$ converges to $\varphi\in L^\infty\left(\prod_{i=1}^n \lambda_{a_{i}}\right)$ in the $w^*$-topology, then for any $b_1,\ldots, b_{n-1}\in\mathcal{S}^2(\hilh)$,
\begin{align*}
\left[\Gamma^{a_1,\dots, a_n}(\varphi_i)\right](b_1, \dots, b_{n-1})\xrightarrow[i]{\text{\normalfont weakly in $\mathcal{S}^{2}(\hilh)$}}\left[\Gamma^{a_1,\dots, a_n}(\varphi)\right](b_1, \dots, b_{n-1}).
\end{align*}
Consider $p_1,\dots,p_{n-1},p\in (1,\infty)$ such that $\frac{1}{p}=\frac{1}{p_{1}}+\cdots+\frac{1}{p_{n-1}}$ and suppose $\varphi\in L^\infty\left(\prod_{i=1}^n \lambda_{a_{i}} \right)$. We write
\begin{align*}
\Gamma^{a_1,\ldots,a_n}(\varphi) \in \mathcal{B}_{n-1}(\mathcal{S}^{p_1}(\hilh)\times \cdots\times\mathcal{S}^{p_{n-1}}(\hilh), \mathcal{S}^{p}(\hilh)),
\end{align*}
if $\Gamma^{a_1,\ldots,a_n}(\varphi)$ defines a bounded $(n-1)$-linear mapping
\begin{align*}
\Gamma^{a_1,\ldots,a_n}(\varphi) : (\mathcal{S}^2(\hilh)\cap\mathcal{S}^{p_1}(\hilh))\times\cdots \times(\mathcal{S}^2(\hilh)\cap\mathcal{S}^{p_{n-1}}(\hilh))\to \mathcal{S}^{p}(\hilh),
\end{align*}
where each $\mathcal{S}^2(\hilh) \cap\mathcal{S}^{p_i}(\hilh)$ is equipped with the $\|\cdot\|_{p_i}$-norm. Due to the density of $\mathcal{S}^2(\hilh) \cap\mathcal{S}^{p_i}(\hilh)$ in $\mathcal{S}^{p_i}(\hilh)$, the mapping extends uniquely to a multilinear operator
\begin{align*}
\Gamma^{a_1,\ldots,a_n}(\varphi):\mathcal{S}^{p_1}(\hilh)\times \cdots\times\mathcal{S}^{p_{n-1}}(\hilh)\to\mathcal{S}^{p}(\hilh).
\end{align*}

\begin{lma}{\normalfont\cite[Lemma 2.3]{Co22}}\label{weak_conv}
Let $n\in\N$ and $n\ge2$. Let $1<p,p_{j}<\infty$, $j=1,\ldots,n-1$ be such that $\frac{1}{p}=\frac{1}{p_{1}}+\cdots+\frac{1}{p_{n-1}}$. Let $a_1,\ldots,a_n$ be self-adjoint operators in $\hilh$ and $(\varphi_k)_{k\geq 1},\varphi\in L^\infty(\lambda_{a_1}\times\cdots\times\lambda_{a_n})$. Assume that $(\varphi_k)_{k}$ is $w^*$-convergent to $\varphi$ and that 
$\left(\Gamma^{a_1,\ldots, a_n}(\varphi_k)\right)_{k\geq 1}\subset\mathcal{B}_{n-1}(\mathcal{S}^{p_1}(\hilh)\times\cdots\times\mathcal{S}^{p_{n-1}}(\hilh), \mathcal{S}^p(\hilh))$ is bounded. Then 
\[
\Gamma^{a_1,\ldots,a_n}(\varphi)\in \mathcal{B}_{n-1}(\mathcal{S}^{p_1}(\hilh)\times\cdots\times\mathcal{S}^{p_{n-1}}(\hilh), \mathcal{S}^p(\hilh))
\]
with
\begin{align*}
\left\|\Gamma^{a_1,\ldots,a_n}(\varphi) \right\|_{\mathcal{B}_{n-1}(\mathcal{S}^{p_1}(\hilh)\times\cdots\times\mathcal{S}^{p_{n-1}}(\hilh), \mathcal{S}^p(\hilh))} 
\leq\liminf_{k}\left\| \Gamma^{a_1,\ldots,a_n}(\varphi_k) \right\|_{\mathcal{B}_{n-1}(\mathcal{S}^{p_1}(\hilh)\times\cdots\times\mathcal{S}^{p_{n-1}}(\hilh), \mathcal{S}^p(\hilh))}
\end{align*}
and for any $b_i\in \mathcal{S}^{p_i}(\hilh), 1\leq i\le n-1$,
\begin{align*}
\left[\Gamma^{a_1,\ldots, a_n}(\varphi_k)\right](b_1,\ldots,b_{n-1}) 
\xrightarrow[k \to +\infty]{\text{\normalfont weakly in $\mathcal{S}^{p}(\hilh)$}} 
\left[\Gamma^{a_1,\ldots,a_n}(\varphi)\right](b_1,\ldots,b_{n-1}).
\end{align*}
\end{lma}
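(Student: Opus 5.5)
The plan is a duality argument. First I handle arguments in $\mathcal{S}^2(\hilh)\cap\mathcal{S}^{p_i}(\hilh)$, where the $w^*$-continuity of $\Gamma^{a_1,\ldots,a_n}$ applies directly. Then I pass to general arguments by density, using the uniform bound on the operators $\Gamma^{a_1,\ldots,a_n}(\varphi_k)$.

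Set $C:=\sup_k\|\Gamma^{a_1,\ldots,a_n}(\varphi_k)\|_{\mathcal{B}_{n-1}(\mathcal{S}^{p_1}\times\cdots\times\mathcal{S}^{p_{n-1}},\mathcal{S}^p)}<\infty$ and $L:=\liminf_k$ of the same norms. Let $p'$ be the conjugate exponent of $p$.

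\textbf{Step 1 (norm bound on the dense domain).} Fix $b_i\in\mathcal{S}^2(\hilh)\cap\mathcal{S}^{p_i}(\hilh)$ and a finite rank operator $y$. By the $w^*$-continuity recalled after Definition~\ref{moi_coine}, $[\Gamma(\varphi_k)](b_1,\ldots,b_{n-1})\to[\Gamma(\varphi)](b_1,\ldots,b_{n-1})$ weakly in $\mathcal{S}^2(\hilh)$. Since $y\in\mathcal{S}^2(\hilh)$, this gives
\[
\Tr\big([\Gamma(\varphi)](b_1,\ldots,b_{n-1})\,y\big)=\lim_k\Tr\big([\Gamma(\varphi_k)](b_1,\ldots,b_{n-1})\,y\big).
\]
Passing to a subsequence realizing the $\liminf$ and applying H\"older's inequality, I obtain
\[
\big|\Tr\big([\Gamma(\varphi)](b)\,y\big)\big|\le L\,\|b_1\|_{p_1}\cdots\|b_{n-1}\|_{p_{n-1}}\,\|y\|_{p'} .
\]
Next I use the duality fact that an operator $x\in\mathcal{S}^2(\hilh)$ satisfying $|\Tr(xy)|\le K\|y\|_{p'}$ for all finite rank $y$ belongs to $\mathcal{S}^p(\hilh)$ with $\|x\|_p\le K$. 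This can be checked on finite-rank compressions $PxP$, using $\|PxP\|_p=\sup\{|\Tr(xPyP)|:\|y\|_{p'}\le1\}$ and letting $P\uparrow 1$. It yields $\Gamma(\varphi)\in\mathcal{B}_{n-1}(\mathcal{S}^{p_1}\times\cdots\times\mathcal{S}^{p_{n-1}},\mathcal{S}^p)$ with norm at most $L$, and the bounded extension to all of $\mathcal{S}^{p_1}\times\cdots\times\mathcal{S}^{p_{n-1}}$ follows by density.

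\textbf{Step 2 (weak convergence on the dense domain).} For $b_i$ as in Step 1, the sequence $x_k:=[\Gamma(\varphi_k)](b)$ is bounded in $\mathcal{S}^p(\hilh)$ by $C\prod\|b_i\|_{p_i}$. Its pairing with every finite rank $y$ converges to the pairing of $x:=[\Gamma(\varphi)](b)$ with $y$. Finite rank operators are dense in $\mathcal{S}^{p'}(\hilh)=\mathcal{S}^p(\hilh)^*$, so an $\varepsilon/3$ argument using the uniform bound gives $x_k\to x$ weakly in $\mathcal{S}^p(\hilh)$.

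\textbf{Step 3 (general arguments).} For arbitrary $b_i\in\mathcal{S}^{p_i}(\hilh)$, I choose $\tilde b_i\in\mathcal{S}^2\cap\mathcal{S}^{p_i}$ close in $\|\cdot\|_{p_i}$. Multilinearity and the uniform bounds $C$ and $L$ control the differences $[\Gamma(\varphi_k)](b)-[\Gamma(\varphi_k)](\tilde b)$ and $[\Gamma(\varphi)](b)-[\Gamma(\varphi)](\tilde b)$ uniformly in $k$. Combined with Step 2, this gives the asserted weak convergence.

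The only delicate point is the duality step in Step 1, namely that an $\mathcal{S}^2$ operator with bounded $\mathcal{S}^{p'}$-pairings on finite rank operators lies in $\mathcal{S}^p$ with the right norm. I would handle it by the finite-rank compression argument sketched there.
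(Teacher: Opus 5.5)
Your argument is correct and complete. You use $w^*$-continuity of $\Gamma^{a_1,\ldots,a_n}$ to get convergence of the pairings with finite-rank operators on the dense domain $\mathcal{S}^2(\hilh)\cap\mathcal{S}^{p_i}(\hilh)$, the compression characterization of $\mathcal{S}^p$-membership to obtain the $\liminf$ norm bound, an $\varepsilon/3$ argument for weak convergence in $\mathcal{S}^p(\hilh)$, and density with the uniform bounds for general $b_i$. The paper gives no proof of its own here and simply imports the lemma from \cite[Lemma 2.3]{Co22}; your duality-plus-density proof is the natural self-contained argument for it and needs no further ingredient.
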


\medskip

\noindent We next introduce the following new function class.
\begin{align}\label{Funct_Class-2}
\fQ_n(\R)=\bigl\{f\in C_b^{n-1}(\R)\cap D_b^n(\R):f^{[n]} \in \fA_n\bigr\}.
\end{align}

\medskip

The following result is an immediate consequence of Lemma \ref{lem_inclussion0}.

\begin{lma}\label{lem_inclussion}
Let $n \in \N$. Then
\begin{enumerate}[\normalfont(i)]
\item $\mathcal{W}_n(\R) \subseteq \mathfrak{W}_n(\R) \subseteq \fQ_n(\R)$.
\item $B_{\infty 1}^1(\R)\cap B_{\infty 1}^n(\R)\subseteq B_{\infty 1}^n(\R)\cap C^n_b(\R)\subseteq \fQ_n(\R)$.
\end{enumerate}
\end{lma}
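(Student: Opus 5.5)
Both chains of inclusions reduce to Lemma~\ref{lem_inclussion0} together with the definitions \eqref{Funct_Class-1} and \eqref{Funct_Class-2}. The central observation is that $\fC_n\subseteq\fA_n$ by definition, since $\fC_n$ is the subset of $\fA_n$ singled out by an additional uniformity requirement on the families $\{\alpha_j(\cdot,\omega)\}$. Likewise $C^n_b(\R)\subseteq C^{n-1}_b(\R)\cap D^n_b(\R)$ trivially: an $n$-times continuously differentiable function with bounded $f',\dots,f^{(n)}$ has bounded derivatives up to order $n-1$ and is $n$-times differentiable with $f^{(n)}$ bounded.

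For (i), the first inclusion $\cW_n(\R)\subseteq\fW_n(\R)$ is Lemma~\ref{lem_inclussion0}(i). For $\fW_n(\R)\subseteq\fQ_n(\R)$, take $f\in\fW_n(\R)$. Then $f\in C^n_0(\R)$, so $f^{(n)}$ is continuous and vanishes at infinity, hence is bounded. Also $f^{(k)}\in C_b(\R)$ for $1\le k\le n-1$, so $f\in C^{n-1}_b(\R)\cap D^n_b(\R)$. Finally $f^{[n]}\in\fC_n\subseteq\fA_n$. This gives $f\in\fQ_n(\R)$.

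For (ii), the first inclusion $B^1_{\infty1}(\R)\cap B^n_{\infty1}(\R)\subseteq B^n_{\infty1}(\R)\cap C^n_b(\R)$ requires that every $f$ in the left-hand side have $f',\dots,f^{(n)}$ bounded. The definition of $B^n_{\infty1}$ gives $f^{(n)}$ bounded, and the definition of $B^1_{\infty1}$ gives $f'$ bounded. The intermediate derivatives $f^{(k)}$, $1<k<n$, are handled by the Littlewood--Paley decomposition: write $f^{(k)}$ as $\sum_j (f*w_j)^{(k)}$ up to a polynomial, and use Bernstein's inequality $\|(f*w_j)^{(k)}\|_\infty\lesssim 2^{jk}\|f*w_j\|_\infty$. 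Since $2^{jk}\le 2^{j}$ for $j\le0$ and $2^{jk}\le 2^{jn}$ for $j\ge0$, the series converges uniformly, its sum is bounded, and $f^{(k)}$ is bounded. The leftover polynomial has bounded first derivative (because $f'$ is bounded), so it is affine and contributes nothing to $f^{(k)}$ for $k\ge2$. This interpolation step is the only genuinely non-formal point, and I expect it to be the main obstacle. I would try first to cite the standard embedding $B^1_{\infty1}\cap B^n_{\infty1}\subseteq B^k_{\infty1}$ (see \cite{Pe06,AlPe11}) rather than re-derive it.

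For the second inclusion, take $f\in B^n_{\infty1}(\R)\cap C^n_b(\R)$. Lemma~\ref{setinclussion} gives $f^{[n]}\in\fC_n\subseteq\fA_n$. Together with $C^n_b(\R)\subseteq C^{n-1}_b(\R)\cap D^n_b(\R)$, this yields $f\in\fQ_n(\R)$. Note that no $C_0$ condition is needed here, which is precisely why $\fQ_n(\R)$ is larger than $\fW_n(\R)$.
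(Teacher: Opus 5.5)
Your proposal is correct and follows the paper's route. The paper calls the lemma an immediate consequence of Lemma~\ref{lem_inclussion0}, and its content is what you write out: $\fC_n\subseteq\fA_n$, $C^n_0(\R)\cap C^{n-1}_b(\R)\subseteq C^{n-1}_b(\R)\cap D^n_b(\R)$, $C^n_b(\R)\subseteq C^{n-1}_b(\R)\cap D^n_b(\R)$, and Lemma~\ref{setinclussion} for $f^{[n]}\in\fC_n$ when $f\in B^n_{\infty1}(\R)$.

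The paper says nothing about the boundedness of $f^{(k)}$ for $1<k<n$ in the first inclusion of (ii), so you are right to flag it. Your Littlewood--Paley/Bernstein argument works, provided you make the ``up to a polynomial'' identification at the level of $f'$. The series $\sum_j (f*w_j)'$ converges uniformly, whereas $\sum_j f*w_j$ need not converge at low frequencies. Then $f'-\sum_j (f*w_j)'$ is a bounded polynomial, hence constant, and differentiating gives $f^{(k)}=\sum_j (f*w_j)^{(k)}$ for $k\ge 2$.

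A lighter route is the Landau--Kolmogorov inequality applied to $g=f'$ with $m=n-1$. If $g\in C^{m}(\R)$ with $g$ and $g^{(m)}$ bounded, write Taylor's formula at the step sizes $h=1,\ldots,m-1$. Inverting the resulting invertible Vandermonde-type system bounds every $g^{(j)}$, $0<j<m$, uniformly in $x$. This uses only $\|f'\|_\infty$ and $\|f^{(n)}\|_\infty$, which are built into the definitions of $B^1_{\infty1}(\R)$ and $B^n_{\infty1}(\R)$, and needs no dyadic decomposition at all.
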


The following lemma is crucial for the proof of our main result.

\begin{lma}\label{moi_eq}	
Let $n\in\N$ with $n\geq 2$, and let $p_1,\ldots, p_n,p,r \in (1,\infty)$ satisfy
$\frac{1}{p}=\sum_{i=1}^{n}\frac{1}{p_i},~~\frac{1}{r}=\sum_{i=1}^{n-1}\frac{1}{p_i}.$ Let $a_1,\ldots,a_{n+1}$ be self-adjoint operators in $\hilh$, and let $b_i\in\mathcal{S}^{p_i}(\hilh)$ for $i=1,\ldots,n$. For $f\in D^n(\R)$, define
\begin{align*}
\phi_{f^{[n]}}(\lambda_0,\ldots,\lambda_{n-1})= f^{[n]}(\lambda_0,\ldots,\lambda_{n-1},\lambda_0).
\end{align*}
Then, for $f\in D^n_b(\R)$ such that $f^{[n]}\in\fA_n$ admits the representation \eqref{function_representation}, we have
\begin{align}
\label{eq:neq-1}
\left[\Gamma^{a_1,\ldots,a_{n+1}}(f^{[n]})\right](b_1,\ldots,b_n)
&=\int_{\Omega}\alpha_0(a_1,\omega)\, b_1\alpha_1(a_2,\omega)\, b_2\cdots b_n\alpha_n(a_{n+1},\omega)\, d\nu(\omega),\\
\label{eq:neq-2}
\left[\Gamma^{a_1,\ldots,a_{n}}(\phi_{f^{[n]}})\right](b_1,\ldots,b_{n-1})
&=
\int_{\Omega}	\alpha_n(a_1,\omega)\alpha_0(a_1,\omega)\, b_1\alpha_1(a_2,\omega)\, b_2\cdots
b_{n-1}\alpha_{n-1}(a_n,\omega)\, d\nu(\omega).
\end{align}
Moreover, for any $f\in D_b^n(\R)$ (that is, $f^{(n)}$ exists and is bounded), there exist constants $c_{p,n},c_{r,n}>0$ such that
\begin{align}
\label{eq:neq-3}
\left\|\left[\Gamma^{a_1,\ldots,a_{n+1}}(f^{[n]})\right](b_1,\ldots,b_n)\right\|_p&\le c_{p,n}\,\|f^{(n)}\|_\infty\|b_1\|_{p_1}\cdots\|b_n\|_{p_n},\\
\label{eq:neq-4}
\left\|\left[\Gamma^{a_1,\ldots,a_{n}}(\phi_{f^{[n]}})\right](b_1,\ldots,b_{n-1})\right\|_r&\le c_{r,n}\,\|f^{(n)}\|_\infty\|b_1\|_{p_1}\cdots\|b_{n-1}\|_{p_{n-1}}.
\end{align}
\end{lma}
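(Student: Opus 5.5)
The idea is to verify the two integral representations first on elementary symbols and then pass to general symbols using the $w^*$-continuity of $\Gamma$ (Lemma \ref{weak_conv} and \cite[Proposition 3.4]{CoLeSu}). The norm estimates will come from identifying $\Gamma$ with the Potapov--Skripka--Sukochev operator $T$ and invoking Theorem \ref{bddMOI}.

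\textbf{Step 1: formula \eqref{eq:neq-1}.} I start with $b_i\in\mathcal{S}^2(\hilh)\cap\mathcal{S}^{p_i}(\hilh)$. Since $f^{[n]}\in\fA_n$ admits the representation \eqref{function_representation}, the right-hand side of \eqref{eq:neq-1} is a Bochner integral in $\mathcal{S}^2(\hilh)$, and in $\mathcal{S}^p(\hilh)$ as well. This uses $\|\alpha_j(a,\omega)\|\le\|\alpha_j(\cdot,\omega)\|_\infty$ and the integrability condition in the definition of $\fA_n$.

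Next I pair both sides with a finite rank operator $y$ through the trace. For $\varphi=f_1\otimes\cdots\otimes f_{n+1}$ the map
\[
\varphi\mapsto \Tr\big([\Gamma(\varphi)](b_1,\ldots,b_n)y\big)
\]
is given by integration of $\varphi$ against a complex measure $\mu_{b,y}\in L^1\big(\prod\lambda_{a_i}\big)$. This is exactly the predual element implementing the $w^*$-continuity. Hence for every $\varphi\in L^\infty$ the pairing equals $\int\varphi\,d\mu_{b,y}$.

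Plugging in $\varphi=f^{[n]}=\int_\Omega\prod_j\alpha_j(\lambda_{j+1},\omega)\,d\nu(\omega)$, I apply Fubini, which is justified by the integrability condition. This gives
\[
\int_\Omega \Tr\big(\alpha_0(a_1,\omega)b_1\cdots b_n\alpha_n(a_{n+1},\omega)\,y\big)\,d\nu(\omega),
\]
and that is the pairing of the Bochner integral with $y$. So \eqref{eq:neq-1} holds on $\mathcal{S}^2\cap\mathcal{S}^{p_i}$. It then extends by density once we know both sides are bounded in the $p_i$-norms; the right side is bounded trivially. One technical point is that $\alpha_j(\cdot,\omega)$ must be Borel, so that it defines an element of $L^\infty(\lambda_{a_i})$, and that the integrand is jointly measurable; I take these from the definition of $\fA_n$.

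\textbf{Step 2: formula \eqref{eq:neq-2}.} This is identical to Step 1, since
\[
\phi_{f^{[n]}}(\lambda_0,\ldots,\lambda_{n-1})=\int_\Omega (\alpha_n\alpha_0)(\lambda_0,\omega)\,\alpha_1(\lambda_1,\omega)\cdots\alpha_{n-1}(\lambda_{n-1},\omega)\,d\nu(\omega)
\]
is again of the form \eqref{function_representation}, now with $n$ variables.

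\textbf{Step 3: the bounds \eqref{eq:neq-3} and \eqref{eq:neq-4} for general $f\in D^n_b(\R)$.} This is the main obstacle, because $f^{[n]}$ need not lie in $\fA_n$ and $f^{(n)}$ need not be continuous. The plan is to approximate.

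Let $f_k=f*\rho_k$ with a smooth mollifier $\rho_k$. Then $f_k^{(n)}=f^{(n)}*\rho_k$ is smooth, and $\|f_k^{(n)}\|_\infty\le\|f^{(n)}\|_\infty$. Moreover $f_k^{[n]}\to f^{[n]}$ pointwise: the Hermite--Genocchi formula
\[
f^{[n]}=\int_{\text{simplex}} f^{(n)}\big(\textstyle\sum_i s_i\lambda_i\big)\,ds
\]
holds for $D^n$ functions with bounded $n$-th derivative, since $f^{(n-1)}$ is Lipschitz. The convergence is also boundedly controlled, so $f_k^{[n]}\to f^{[n]}$ $w^*$ in $L^\infty$ by dominated convergence; the same holds for $\phi_{f_k^{[n]}}$.

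For each smooth $f_k$, I identify $\Gamma(f_k^{[n]})$ with $T_{f_k^{[n]}}$ on $\mathcal{S}^2\cap\mathcal{S}^{p_i}$. Both are $w^*$-continuous in the symbol and agree on elementary tensors. Theorem \ref{bddMOI}, with $\cM=\bh$, then gives the bound $c_{p,n}\|f^{(n)}\|_\infty$ uniformly in $k$. For the diagonal symbol, I obtain the bound from Theorem \ref{bddMOI} with $n-1$ inputs, using the cyclic reduction as in Corollary \ref{cr:trace-bdd} and \cite{PoSkSu13}. Concretely, $\Gamma(\phi_{f^{[n]}})$ is the $(n-1)$-linear map obtained from $\Gamma(f^{[n]})$ by setting the last argument equal to the identity, understood through finite-rank truncations.

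Lemma \ref{weak_conv} then passes the uniform bounds to the limit $f$, which gives \eqref{eq:neq-3} and \eqref{eq:neq-4}.
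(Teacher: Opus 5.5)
Steps~1--2 are fine. They give a direct proof of what the paper takes from \cite[Proposition~7.4]{CoLeSu}. Your route to \eqref{eq:neq-3} (regularize, compare with Theorem~\ref{bddMOI}, pass to the limit with Lemma~\ref{weak_conv}) is the kind of argument behind \cite[Theorem~2.7]{Co22}, which the paper cites.

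One justification must change. $T_\varphi$ is defined only through the discretizations $S_{\varphi,m}$ and has no $w^*$-continuity in the symbol, so ``both are $w^*$-continuous'' does not identify $\Gamma(f_k^{[n]})$ with $T_{f_k^{[n]}}$. Argue instead as follows:
\begin{itemize}
\item $S_{\varphi,m}=\Gamma(\varphi_m)$ on $\mathcal{S}^2\cap\mathcal{S}^{p_i}$, where $\varphi_m(\lambda)=\varphi(\lfloor m\lambda_1\rfloor/m,\ldots,\lfloor m\lambda_{n+1}\rfloor/m)$; this uses $w^*$-continuity of $\Gamma$ as $N\to\infty$.
\item $\varphi_m\to f_k^{[n]}$ boundedly pointwise, because $f_k^{[n]}$ is continuous.
\end{itemize}
Also, you need $f_k^{[n]}\to f^{[n]}$ at every point, including the full diagonal, since $\prod\lambda_{a_i}$ may charge it. This holds because $(f*\rho_k)^{(n)}(x)\to f^{(n)}(x)$ for every $x$, as $f^{(n-1)}$ is differentiable everywhere.

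The genuine gap is \eqref{eq:neq-4}. The symbol $\phi_{f^{[n]}}$ is not an $(n-1)$-st divided difference, so Theorem~\ref{bddMOI} with $n-1$ inputs says nothing about it. Your concrete description is also wrong. Inserting $b_n=I$ into $\Gamma^{a_1,\ldots,a_{n+1}}(f^{[n]})$ produces $\alpha_{n-1}(a_n)\alpha_n(a_{n+1})$ at the end, which ties $\lambda_n$ to $\lambda_{n-1}$, not to $\lambda_0$. Moreover, Theorem~\ref{bddMOI} gives no estimate when an input is the identity.

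The wrap-around to $\lambda_0$ appears only under the trace. For $f^{[n]}\in\fA_n$ and finite-rank $y$, \eqref{eq:neq-1}, \eqref{eq:neq-2} and cyclicity give
$\Tr\bigl([\Gamma^{a_1,\ldots,a_n}(\phi_{f^{[n]}})](b_1,\ldots,b_{n-1})\,y\bigr)=\Tr\bigl([\Gamma^{a_1,\ldots,a_n,a_1}(f^{[n]})](b_1,\ldots,b_{n-1},y)\bigr)$.
So, by duality, \eqref{eq:neq-4} amounts to a cyclic trace estimate at the endpoint $\sum_{i<n}1/p_i+1/r'=1$. Theorem~\ref{bddMOI} gives nothing there, since it requires $1/p<1$. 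That estimate is the separate Potapov--Skripka--Sukochev input of Corollary~\ref{cr:trace-bdd}. Your pointer to that corollary is the right one, but it has to be used through this duality, not through an identity insertion.

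Corollary~\ref{cr:trace-bdd} is stated only for $f^{[n]}\in\fC_n$, which $f*\rho_k$ need not satisfy. So approximate instead by $g_k$ with $g_k^{(n)}=\chi_k\,(f^{(n)}*\rho_k)\in C_c^\infty(\R)$, where $0\le\chi_k\le 1$ are cut-offs equal to $1$ on $[-k,k]$. Then:
\begin{itemize}
\item $g_k^{[n]}\in\fC_n$, by Hermite--Genocchi together with $\widehat{g_k^{(n)}}\in L^1$;
\item $\|g_k^{(n)}\|_\infty\le\|f^{(n)}\|_\infty$;
\item $\phi_{g_k^{[n]}}\to\phi_{f^{[n]}}$ boundedly pointwise.
\end{itemize}
Lemma~\ref{weak_conv} then yields \eqref{eq:neq-4} for all $f\in D^n_b(\R)$.
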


\begin{proof}
We prove \eqref{eq:neq-1} and \eqref{eq:neq-3}; the remaining statements \eqref{eq:neq-2} and \eqref{eq:neq-4} follow analogously.
	
The identity \eqref{eq:neq-1} follows from \cite[Proposition~7.4]{CoLeSu} together with the density of $\mathcal{S}^2(\hilh)\cap\mathcal{S}^{p_i}(\hilh)$ in $\mathcal{S}^{p_i}(\hilh)$. The estimate follows from (the proof of) \cite[Theorem 2.7]{Co22}, together with similar density arguments for Schatten classes.
\end{proof}

The following theorem is our main result in this section.

\begin{thm}\label{trace_formula_thm2}
Let $n\in\N$ and $n\geq 2$. Let $a$ be a self-adjoint operator in $\hilh$, and let $b=b^*\in\mathcal{S}^n(\hilh)$.  Then there exists a unique function $\eta_{n}:=\eta_{n,a,b}\in L^1(\R)$ such that
\begin{align}\label{iden_I}
\Tr\left(\left[\Gamma^{a,a+b,a,\ldots,a}(\phi_{f^{[n]}})\right]((b)^{n-1})b\right)= \int_\R f^{(n)}(t) \eta_n(t) dt,
\end{align}
for every $f\in D^n_b(\R)$. Moreover, 
\begin{align}\label{iden_II}
\Tr\left(\left[\Gamma^{a,a+b,a,\ldots,a}(f^{[n]})\right]((b)^{n})\right)=\int_{\R}f^{(n)}(t)\eta_n(t)dt
\end{align}
for every $f\in D^n_b(\R)$ such that $f^{[n]}\in \fA_n$.
Furthermore, 
\begin{align}\label{iden_III}
\Tr\left(\cR_n(f,a,b)\right)= \int_\R f^{(n)}(t)\eta_n(t) dt
\end{align}
for every $f\in\fQ_n(\R)$.
\end{thm}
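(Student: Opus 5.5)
The proof follows the Potapov--Skripka--Sukochev scheme, run first for the functional in \eqref{iden_I}, which is defined for all $f\in D^n_b(\R)$.

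\emph{Step 1: the functional and its measure.} Define on $D^n_b(\R)$
\[
\Phi(f):=\Tr\left(\left[\Gamma^{a,a+b,a,\ldots,a}(\phi_{f^{[n]}})\right]((b)^{n-1})\,b\right).
\]
By \eqref{eq:neq-4} with $p_1=\cdots=p_n=n$, so that $r=n/(n-1)$, and by H\"older's inequality,
\[
|\Phi(f)|\le c_n\|f^{(n)}\|_\infty\|b\|_n^n .
\]
Hence $\Phi(f)$ depends only on $f^{(n)}$: if $f^{(n)}=g^{(n)}$, then $f-g$ is a polynomial of degree less than $n$, its $n$-th divided difference vanishes, and $\Phi(f)=\Phi(g)$. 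We therefore obtain a bounded linear functional on $\{f^{(n)}\}\subset\ell^\infty(\R)$.

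Restricted to $C_c^{n+1}(\R)$, the Riesz representation theorem gives a complex measure $\mu$ with $\Phi(f)=\int f^{(n)}\,d\mu$ and $\|\mu\|\le c_n\|b\|_n^n$.

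\emph{Step 2: absolute continuity.} I would show that $\mu$ is absolutely continuous as in \cite{PoSkSu13}. When $b$ is finite rank, $\Phi(f)$ equals $\Tr(\cR_n(f,a,b))$ for smooth $f$; this follows from \eqref{iden_II} together with the integral representation of $\cR_n$ obtained as in Theorem \ref{main_thm_Lp}. For finite-rank $b$ one already knows an $L^1$ density, e.g. from Theorem \ref{inv_thm}. Since the bound above is linear in each slot, the densities $\eta_{n,a,b_k}$ for finite-rank $b_k\to b$ in $\Sp^n$ form a Cauchy sequence in $L^1$; this uses the multilinear estimate applied to the differences. The limit $\eta_n$ then represents $\Phi$ on $C_c^{n+1}$.

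\emph{Step 3: extension to all of $D^n_b(\R)$.} This is the main obstacle, because $f^{(n)}$ need not be continuous. I would approximate $f$ by $f_k=f*\rho_k$ with mollifiers $\rho_k$, and also cut off with $h_k$ as in Proposition \ref{continuityat0}. Then $f_k^{(n)}\to f^{(n)}$ boundedly almost everywhere, so $f_k^{[n]}\to f^{[n]}$ boundedly almost everywhere. The products of scalar spectral measures are absolutely continuous in the relevant coordinates? Not necessarily, so instead I would use the divided-difference integral formula
\[
f^{[n]}=\int_{\Delta_n} f^{(n)}\Big(\sum_i s_i\lambda_i\Big)\,ds .
\]
This shows that $f_k^{[n]}\to f^{[n]}$ pointwise everywhere with a uniform bound, by dominated convergence in $s$. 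Hence $\phi_{f_k^{[n]}}\to\phi_{f^{[n]}}$ $w^*$. Lemma \ref{weak_conv} then gives weak convergence in $\Sp^{r}$, so $\Phi(f_k)\to\Phi(f)$. Meanwhile $\int f_k^{(n)}\eta_n\to\int f^{(n)}\eta_n$ by dominated convergence. Uniqueness of $\eta_n$ follows from density of $\{f^{(n)}:f\in C_c^{n+1}\}$ in $C_0$.

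\emph{Step 4: identities \eqref{iden_II} and \eqref{iden_III}.} For \eqref{iden_II}, compare \eqref{eq:neq-1} and \eqref{eq:neq-2}. The trace of the $n$-fold integral equals, by cyclicity applied inside the Bochner integral, the trace of the $(n-1)$-fold integral multiplied by $b$, which gives the claim.

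For \eqref{iden_III} with $f\in\fQ_n(\R)$, I would show
\[
\cR_n(f,a,b)=\left[\Gamma^{a+b,a,\ldots,a}(f^{[n]})\right]((b)^n).
\]
This is the $\Sp^p$ analogue of \eqref{eq:remainder-traceclass}, obtained by iterating the perturbation formula and using the Schatten-class differentiability of Theorem \ref{Le_Sk_diff} together with \cite{Co22}. A cyclic rotation of the trace moves the perturbed operator $a+b$ to the second slot, and \eqref{iden_II} then finishes the proof.
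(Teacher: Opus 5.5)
Your Step~2 has a genuine gap. The densities $\eta_{n,a,b_k}$ need not form a Cauchy sequence in $L^1(\R)$ on the strength of the multilinear estimate \eqref{eq:neq-4}. The functional $\Phi_b(f)=\Tr\bigl([\Gamma^{a,a+b,a,\ldots,a}(\phi_{f^{[n]}})]((b)^{n-1})\,b\bigr)$ depends on $b$ in two ways: through the slots, and through the operator $a+b$ in the second position. But \eqref{eq:neq-4} is an estimate for a fixed tuple of operators. It handles the slot differences, but not the remaining term
\[
\Tr\bigl([\Gamma^{a,a+b_k,a,\ldots,a}(\phi_{f^{[n]}})]((b_m)^{n-1})\,b_m\bigr)-\Tr\bigl([\Gamma^{a,a+b_m,a,\ldots,a}(\phi_{f^{[n]}})]((b_m)^{n-1})\,b_m\bigr).
\]
Nothing in your argument bounds this by $\varepsilon_{k,m}\|f^{(n)}\|_\infty$. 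A perturbation formula of the type of Theorem~\ref{Prtb-Thm} turns it into an expression in $f^{[n+1]}$, controlled only by $\|f^{(n+1)}\|_\infty$. That says nothing about $\|\eta_{n,a,b_k}-\eta_{n,a,b_m}\|_1=\sup_{\|f^{(n)}\|_\infty\le 1}|\Phi_{b_k}(f)-\Phi_{b_m}(f)|$. So you obtain at best weak$^*$ convergence of $\eta_{n,a,b_k}\,dt$ to $\mu$, and such limits need not be absolutely continuous.

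The approximation is in fact unnecessary. Since $\cM=\bh$, every $b\in\Sp^n$ is bounded, i.e.\ $b\in L_n(\bh)\cap\bh$. Hence Theorem~\ref{inv_thm} applies to $b$ itself (as does Theorem~\ref{main_thm_Lp} with the remark following it, since $\Sp^n\subseteq\Sp^{n^2}$). It gives $\eta_n\in L^1(\R)$ with $\Tr(\cR_n(f,a,b))=\int_\R f^{(n)}\eta_n$ for all $f\in C_c^{n+1}(\R)$. Identifying $\Phi(f)$ with $\Tr(\cR_n(f,a,b))$ for such $f$ then yields $d\mu=\eta_n\,dt$ directly; this is the paper's route.

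That identification, which you use in Step~2 and again in Step~4 for \eqref{iden_III}, is also mis-justified: cyclicity of the trace cannot move $a+b$ from the first slot to the second. In $[\Gamma^{a+b,a,\ldots,a}(f^{[n]})]((b)^n)$ the outer operators differ. A cyclic rotation therefore only places $\alpha_0(a+b,\omega)$ next to $\alpha_n(a,\omega)$ with no $b$ in between. That is structurally different from the integrand $\alpha_0(a,\omega)\,b\,\alpha_1(a+b,\omega)\,b\cdots b\,\alpha_n(a,\omega)$ of $[\Gamma^{a,a+b,a,\ldots,a}(f^{[n]})]((b)^n)$, even after using the symmetry of $f^{[n]}$. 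What you need is the operator identity $\cR_n(f,a,b)=[\Gamma^{a,a+b,a,\ldots,a}(f^{[n]})]((b)^n)$, which the paper takes from \cite[Proposition~3.3]{Co22}. You can also derive it by iterating the perturbation formula in the second slot, starting from $f(a+b)-f(a)=[\Gamma^{a,a+b}(f^{[1]})](b)$.

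A minor point in Step~3: almost-everywhere convergence of $f_k^{(n)}$ does not give convergence of $f_k^{[n]}$ where all arguments coincide, and that set can carry positive spectral mass (common eigenvalues of $a$ and $a+b$). Convergence does hold there, because $f^{(n)}$ is the everywhere-derivative of the Lipschitz function $f^{(n-1)}$, so $f^{(n)}*\rho_k\to f^{(n)}$ at every point.
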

	
\begin{proof}
Let $f \in C^n(\mathbb{R})$ and suppose $f^{(n)} \in C_0(\mathbb{R})$. Then $\phi_{f^{[n]}}$ is continuous on $\R^{n}$, and using \cite[Lemma 5.1]{PoSkSu13}, there exists a constant $M_{0}>0$ (independent of $f$) such that 
\[\|\phi_{f^{[n]}}\|_{\infty}\le M_{0}\|f^{(n)}\|_{\infty}.\]  
By H\"older's inequality for Schatten class operators together with Lemma \ref{moi_eq}, there exists a constant $c_n>0$ such that
\begin{align*}
\left|\Tr\left(\left[\Gamma^{a,a+b,a,\ldots,a}(\phi_{f^{[n]}})\right]((b)^{n-1})b\right)\right|\leq c_{n}\, \|f^{(n)}\|_{\infty}\|b\|_{n}^{n}.
\end{align*}
Now, applying the Hahn-Banach theorem to the functional 
$$C_0(\R) \supseteq \{f^{(n)} : f \in C_0^{n}(\R)\} \ni f^{(n)}\mapsto\Tr\left(\left[\Gamma^{a,a+b,a,\ldots,a}(\phi_{f^{[n]}})\right]((b)^{n-1})b\right),$$ 
and then using the Riesz representation theorem for elements in $(C_0(\R))^*$, there exists a unique Borel regular complex measure $\mu$ such that
\begin{align}\label{iden_1}
\Tr\left(\left[\Gamma^{a,a+b,a,\ldots,a}(\phi_{f^{[n]}})\right]((b)^{n-1})b\right)=\int_{\R}f^{(n)}(t)d\mu(t)
\end{align}
for every $f\in C^n(\R)$ satisfying $f^{(n)}\in C_0(\R)$.

Next, we show that \eqref{iden_1} holds for every $f\in C^n(\R)$ such that $f^{(n)}\in C_b(\R)$. Let $f\in C^n(\R)$ with $f^{(n)}\in C_b(\R)$. Then there exists a sequence $\{g_k\}_{k\ge1}\subset C^n(\R)$ such that, for each $k\in\N$, one has $g_k^{(n)}\in C_0(\R)$, $\|g_k^{(n)}\|_\infty\leq M_1$ for some $M_1>0$, and $g_k^{(n)}\to f^{(n)}$ pointwise on $\R$ as $k\to+\infty$ (see the proof of \cite[Theorem 5.1]{ChCoGiPr24} for the construction of such a sequence). This further implies that $\phi_{g_k^{[n]}}\to\phi_{f^{[n]}}$ pointwise and that the sequence $\{\phi_{g_k^{[n]}}\}_{k\ge1}$ is uniformly bounded by $M_0M_1$. Therefore, by the dominated convergence theorem and Lemma \ref{weak_conv}, it follows from \eqref{iden_1} that

\begin{align}\label{iden_2}
\nonumber\Tr\left(\left[\Gamma^{a,a+b,a,\ldots,a}(\phi_{f^{[n]}})\right]((b)^{n-1})b\right)&=\lim\limits_{k\to\infty}\Tr\left(\left[\Gamma^{a,a+b,a,\ldots,a}(\phi_{g_{k}^{[n]}})\right]((b)^{n-1})b\right)\\
&=\int_{\R}f^{(n)}(t)d\mu(t).
\end{align}
Hence, \eqref{iden_1} holds for every $f\in C^n(\R)$ such that $f^{(n)}\in C_b(\R)$. Finally, we show that \eqref{iden_1} holds for every $f\in D^n_b(\R)$, that is, for functions whose $n$-th derivative is bounded. In this case, we follow the same strategy and choose a sequence $\{h_k\}_{k\geq 1}\subset C^n(\R)$ such that $ \{h_k^{(n)}\}_{k\geq 1}$ is uniformly bounded and converges pointwise to $f^{(n)}$ on $\R$. A construction of such a sequence $\{h_k\}_{k\ge1}$ can be found in the proof of \cite[Theorem 5.1]{ChCoGiPr24}. By a similar argument, we conclude that \eqref{iden_1} holds for every $f\in D^n_b(\R)$.

\medskip

Note that, $C_c^{n+1}(\R)\subset D^n_b(\R)$. Let $f\in C_c^{n+1}(\R)$ and suppose $f^{[n]}$ has the representation \eqref{function_representation}. Then, by Lemma \ref{moi_eq} and the cyclicity of the trace, we obtain
\begin{align}\label{iden_3}
\nonumber&\Tr\left(\left[\Gamma^{a,a+b,a,\ldots, a}(\phi_{f^{[n]}})\right]((b)^{n-1})b\right)\\
\nonumber&=\Tr\left(\int_{\Omega} \alpha_{n}(a,\omega)\alpha_0(a,\omega)\,b\,\alpha_1(a+b,\omega)\,b\cdots b\,\alpha_{n-1}(a, \omega)\,b\,d\nu(\omega)\right)\\
\nonumber&= \Tr\left(\int_{\Omega} \alpha_0(a,\omega)\,b\,\alpha_1(a+b,\omega)\,b\cdots b\,\alpha_{n-1}(a, \omega)\,b\,\alpha_{n}(a,\omega)d\nu(\omega)\right)\\
&=\Tr\left(\left[\Gamma^{a,a+b,a,\ldots, a}({f^{[n]}})\right]((b)^{n})\right)=\Tr\left(\cR_n(f,a, b)\right)\stackrel{\text{Theorem \ref{main_thm_Lp}}}{=}\int_{\R} f^{(n)}(t)\eta_n(t)\,dt,
\end{align}
where the second last equality follows from \cite[Proposition 3.3]{Co22}.
Therefore, combining \eqref{iden_2} and \eqref{iden_3}, we conclude that
\begin{align*}
\int_\R f^{(n)}(t) \,d\mu(t)=\int_\R f^{(n)}(t) \eta_n(t)\,dt\quad \forall f\in C_c^{n+1}(\R).
\end{align*}
This implies that $d\mu(t)=\eta_n(t)\,dt$, which establishes the formula \eqref{iden_I}. The formulas \eqref{iden_II} and \eqref{iden_III} then follow as consequences of \eqref{iden_I}. This completes the proof of the theorem.
\end{proof}

\begin{rmrk}\label{similar to peller}
We note that for an arbitrary function $f\in B_{\infty1}^n(\R)$, the derivatives
\[
\frac{d^k}{dt^k}f(a+tb)\big|_{t=0}, \qquad k<n,
\]
need not exist, since $f^{(k)}$ is not necessarily bounded for $1\leq k\leq n-1$. For this reason, the authors of \cite{AlPe11} introduced the expression $\mathcal{T}_{a,b}^{(n)}f$ (see \cite[(5.1)]{AlPe11}), which in our notation coincides with $\left[\Gamma^{a+b,a,a,\ldots, a}({f^{[n]}})\right]((b)^{n})$, as a substitute for $\mathcal{R}_n(f,a,b)$, and established a trace formula in \cite[Theorem 7.1]{AlPe11}.

On the other hand, for sufficiently smooth functions, for instance $f\in C_c^{n+1}(\R)$, it follows from \cite[Proposition 3.3]{Co22} that
\[
\mathcal{R}_n(f,a,b)
=\mathcal{T}_{a,b}^{(n)}f
=\left[\Gamma^{a+b,a,\ldots, a}({f^{[n]}})\right]((b)^{n})
=\left[\Gamma^{a,a+b,a,\ldots, a}({f^{[n]}})\right]((b)^{n}).
\]

In summary, in the terminology of \cite[Theorem 4.6]{Pe05}, Theorem \ref{trace_formula_thm2} implies that the map
\begin{align*}
f\mapsto \cR_n(f,a,b)
\end{align*}
extends from $B_{\infty 1}^1(\R)\cap B_{\infty 1}^n(\R)$ to a bounded linear operator from
\begin{align}\label{new-functionclass}
\mathcal{C}_n(\R):=\{f\in D^n_b(\R): f^{[n]}\in \fA_n\}
\end{align}
to $\Sp^1(\hilh)$, and the trace formula \eqref{iden_II} holds for all functions $f\in\mathcal{C}_n(\R)$. Since, by Lemma \ref{setinclussion}, $B_{\infty 1}^n(\R)\subseteq \mathcal{C}_n(\R)$, our result extends \cite[Theorem 7.1]{AlPe11}.
\end{rmrk}

\noindent\textit{Acknowledgment}: A. Chattopadhyay is supported by the Core Research Grant (CRG), File No: CRG/2023/004826, of SERB. S. Giri acknowledges the support by the Prime Minister's Research Fellowship (PMRF), Government of India. C. Pradhan acknowledges support from the Fulbright-Nehru postdoctoral fellowship.

\end{document}